\documentclass[11pt,reqno]{amsart}
\usepackage[T1]{fontenc}
\usepackage[utf8]{inputenc}
\usepackage{appendix}
\usepackage{paralist}
\usepackage{longtable}
\usepackage{graphicx}
\usepackage{amsmath}
\usepackage{amsfonts}
\usepackage{fullwidth}
\usepackage{amssymb}
\usepackage{upgreek}
\usepackage{marvosym}
\usepackage{amsthm}
\usepackage{caption}
\usepackage{subcaption}
\usepackage{amstext}
\usepackage{array}
\usepackage{euler}
\usepackage{adjustbox}
\usepackage{times}
\usepackage{lscape}
\usepackage{lipsum}
\usepackage{xspace}
\usepackage{color}
\usepackage{ltxtable}
\newcommand{\MATLAB}{\textsc{Matlab}\xspace}
\usepackage{tabularx,ragged2e,booktabs,caption}
\newcolumntype{C}[1]{>{\Centering}m{#1}}

\usepackage{accents}
\newcommand{\ubar}[1]{\underaccent{\bar}{#1}}

\usepackage[margin=1in]{geometry}
\vfuzz2pt 
\hfuzz2pt 
\numberwithin{equation}{section}
\usepackage{algorithm}
\usepackage{algorithmic}
\makeatletter
\def\listofalgorithms{\@starttoc{loa}\listalgorithmname}
\def\l@algorithm{\@tocline{0}{3pt plus2pt}{0pt}{1.9em}{}}
\renewcommand{\ALG@name}{Algorithm}
\renewcommand{\listalgorithmname}{List of \ALG@name s}
\numberwithin{algorithm}{section}
\theoremstyle{definition}

\theoremstyle{remark}
\newtheorem{rem}[algorithm]{Remark}
\theoremstyle{theorem}
\newtheorem{thm}[algorithm]{Theorem}
\theoremstyle{proposition}

\theoremstyle{example}
\newtheorem{example}[algorithm]{Example}

\theoremstyle{corollary}

\makeatother

\newcommand{\R}{\mathbb{R}}

\usepackage{scalerel}

\usepackage{hyperref}
\hypersetup{
    colorlinks=true,
    linkcolor=blue,
    filecolor=magenta,
    urlcolor=cyan,
}
\urlstyle{same}
\usepackage[table]{xcolor}
\newcommand{\af}[1]{{\color{blue}#1}}

\DeclareMathOperator{\argmin}{argmin}
\usepackage{soul}
\usepackage[normalem]{ulem}
\begin{document}
\title[Semismooth Newton method for bilevel optimization]{Semismooth Newton-type method
for bilevel optimization: Global convergence and extensive numerical experiments}
\author[]{Andreas Fischer$^{\dag}$}

\author[]{Alain B. Zemkoho$^{\ddag}$}
\author[]{Shenglong Zhou$^{\ddag}$\\\\
$^{\dag}$D\MakeLowercase{epartment of} M\MakeLowercase{athematics}, T\MakeLowercase{echnical} U\MakeLowercase{niversity of} D\MakeLowercase{resden}, G\MakeLowercase{ermany}\\
\MakeLowercase{\textsf{andreas.fischer@tu-dresden.de}}\\\\
$^{\ddag}$S\MakeLowercase{chool of} M\MakeLowercase{athematics}, U\MakeLowercase{niversity of} S\MakeLowercase{outhampton}, U\MakeLowercase{nited} K\MakeLowercase{ingdom}\\
\MakeLowercase{$\textsf{\{}$\textsf{a.b.zemkoho, shenglong.zhou}$\textsf{\}}$\textsf{@soton.ac.uk}}}%

\thanks{The work of the first author is funded by the Volkswagen Foundation and the second and third authors are funded by EPSRC Grant {EP/P022553/1}.}
\subjclass[2010]{90C26, 90C30, 90C46, 90C53}%
\keywords{Bilevel optimization, Newton method}%

\date{\today}

\begin{abstract}
We consider the standard optimistic bilevel optimization problem, in particular upper- and lower-level constraints can be coupled. By means of the lower-level value function, the problem is transformed into a single-level optimization problem with a penalization of the value function constraint.
For treating the latter problem, we develop a framework that does not rely on the direct computation of the lower-level value function or its derivatives. For each penalty parameter, the framework leads to a semismooth system of equations. This allows us to extend the semismooth Newton method to bilevel optimization. Besides global convergence properties of the method, we focus on achieving local superlinear convergence to a solution of the semismooth system. To this end, we formulate an appropriate CD-regularity assumption and derive suffcient conditions so that it is fulfilled. Moreover, we develop conditions to guarantee that a solution of the semismooth system is a local solution of the bilevel optimization problem. Extensive numerical experiments on $124$ examples of nonlinear bilevel optimization problems from the literature show that this approach exhibits a remarkable performance, where only a few penalty parameters need to be considered.
\end{abstract}
\maketitle

\section{Introduction}\label{Introduction}

We consider the standard optimistic bilevel optimization problem
\begin{equation}\label{P}
   \underset{x,y}\min~F(x,y)\quad\mbox{s.t.}\quad G(x,y)\le 0,\;\, y\in S(x),
\end{equation}
also known as the upper-level problem, where the set-valued mapping $S:\R^n\rightrightarrows\R^m$ describes the optimal
solution set of the lower-level problem
\begin{equation}\label{lower-level problem-0}
\min_z f(x,z)\quad\mbox{s.t.}\quad g(x,z)\le 0,
\end{equation}
i.e.,
\begin{equation}\label{lower-level problem}
    S(x):= 
    \underset{z}\argmin\left\{f(x,z)\mid g(x,z)\le 0\right\}.
\end{equation}
Throughout the paper, the functions $F,\,f:\R^n\times\R^m\to\R$, $G:\R^n\times\R^m\to\R^p$, and $g:\R^n\times\R^m\to\R^q$
are assumed to be twice continuously differentiable. As usual, we call $F$ (resp. $f$) the upper-level (resp. lower-level) objective function, whereas $G$ and $g$ are called upper-level and lower-level constraint functions, respectively. Finally, $x$ and $y$ represent upper-level and lower-level variables.

In order to derive optimality conditions for the bilevel optimization problem or to treat it
numerically, two main approaches for reformulating \eqref{P} as a single-level problem exist.
One approach is to replace
the lower-level problem by its Karush-Kuhn-Tucker (KKT) conditions. This leads to a mathematical
program with complementarity constraints (MPCC). However, quite strong assumptions are needed to show
that a local (or global) optimal solution of the MPCC yields a local (or global) optimal solution of the corresponding bilevel optimization problem; for details, the reader is referred to \cite{DempeDuttaBlpMpec2010}.
Therefore, we do not pursue this approach here. Nevertheless, we would like to underline that solving an
MPCC is challenging, and only a few Newton-type methods with global or local fast convergence exist
\cite{IzmailovPogosyanSolodov2012,LiHuangJian2015}. Even if we disregard the discrepancies between
(local or global) solutions of the bilevel problem and the MPCC for a moment, it is an open question
which conditions in the context of bilevel problems are implied by assumptions needed for the
local convergence analysis of a Newton-type method for the corresponding KKT reformulation.

The second main approach to transform a bilevel program into a single-level optimization problem is the lower-level value function (LLVF) reformulation \cite{Outrata1990,YeZhuOptCondForBilevel1995}, which provides the basis for the developments in this paper. More in detail, the LLVF
$\varphi:\R^n\to\R$ is given by
\begin{equation}\label{varphi}
   \varphi(x) := \min_z\left\{f(x,z)\mid g(x,z)\le 0\right\},
\end{equation}
where, for the sake of simplicity, we assume throughout the paper that $S(x)\neq\emptyset$ for all $x\in \mathbb{R}^n$ so that $\varphi$ is indeed finite-valued on $\R^n$.
Then, w.r.t.  local and global minimizers, the bilevel program \eqref{P} is equivalent
to the optimization problem
\begin{equation}\label{LLVF}
    \underset{x,y}\min~F(x,y)\;\; \mbox{s.t.}\;\; G(x,y)\le 0,\;\; g(x,y)\le 0,\;\; f(x,y)\le\varphi(x).
\end{equation}
In general, this is a nonconvex constrained optimization problem containing the typically
nondifferentiable LLVF $\varphi$. Even if all the functions involved in \eqref{P} are fully convex (i.e., convex w.r.t. $(x,y)$),
the feasible set of problem \eqref{LLVF} is generally nonconvex.

Several algorithms for computing a stationary point of the LLVF reformulation were already
designed and analyzed. For the case where the feasible set of the lower-level problem does not depend on the upper-level variable $x$
such methods can be found in \cite{LinXuYeOnSolving2014,XuYeASmoothing2014,XuYeZhang2015Smoothing}.
The algorithms in \cite{DempeFranke2016OntheConvex, DempeFrankeSolution2014} were suggested
to solve special cases of problem \eqref{LLVF}, where relaxation schemes are used to deal with the
value function \eqref{varphi}. Finally, the authors of \cite{LamparielloSagratella2017}
proposed numerical methods to solve special bilevel programs by exploiting a connection between problem \eqref{LLVF}
and a generalized Nash equilibrium problem. 
In addition, we note that the LLVF reformulation has been used recently for methods for certain classes of
mixed integer bilevel programming problems, see, e.g.,
\cite{FischettiEtal2017ANewGeneral,LozanoSmith2017AValueFunctionBased,RalphsEtal2017ABranch}.

In this paper, we develop a framework for solving problem \eqref{LLVF}, which does not rely on the direct computation of the LLVF \eqref{varphi} or its derivatives, as it is the case in \cite{LinXuYeOnSolving2014,XuYeASmoothing2014,XuYeZhang2015Smoothing}. Thanks to this framework, we are able to extend the semismooth Newton method to bilevel optimization, for the first time in the literature. The ingredients used to construct the framework and to establish global convergence of the method are well-known in the literature. At first, we use partial calmness \cite{YeZhuOptCondForBilevel1995} to move the value function constraint, i.e., $f(x,y)\le\varphi(x)$, to the upper-level objective function, by means of partial exact penalization. For the resulting problem, necessary KKT-type optimality conditions are derived and reformulated as a square nonsmooth semismooth system of equations which depends on the penalty parameter.

For the aforementioned system of equations, global convergence of a semismooth Newton algorithm is established based on \cite{DeLuca1996}, see also \cite{Qi1993convergence,QiJiangSemismooth1997,QiSun1999} for related results. Obtaining local superlinear convergence of a semismooth Newton algorithm usually relies on the nonsingularity of some generalized Jacobian of the system of equations, see
\cite{FischerASpecial1992} and more general results in {\cite{FFK1998,Kummer1988,QiSunANonsmoothVersion1993}.
For our setting, we will derive conditions that ensure an appropriate nonsingularity property. To this end, upper- and lower-level linear independence conditions as well as a certain SSOSC (strong second order sufficient condition)-type condition will come into play.} In standard nonlinear optimization, a similar setup guaranties that the reference point, where these conditions are satisfied, also corresponds to a locally optimal solution \cite{Robinson1982}; this is known as Robinson condition. We will derive a Robinson-type condition which guaranties that the reference point corresponds to a locally optimal solution for the penalized bilevel program. For this, the LLVF \eqref{varphi} is required to be second order directionally differentiable in the sense of \cite{Ben-Tal1982, Shapiro1988}.


For the algorithm studied in this paper, we have conducted detailed numerical experiments using the BOLIB (Bilevel Optimization LIBrary) \cite{BOLIB2017} made of $124$ examples of nonlinear bilevel optimization problems from the literature. The true optimal solutions are known for 70\% of these problems. We were able to recover these solutions using a selection of just 9 values of the involved penalization parameter. However, it is important to emphasize that the primary goal of the method is to compute stationary points based on problem \eqref{LLVF}. For each of the 9 values of the penalization parameter, the method converges for at least 87\% of the problems. The algorithm also exhibits a good experimental order of convergence (at least greater or equal to 1.5 for at least about 70\% of the problems) for different values of the parameters.  To the best of our knowledge, this is the first time where an algorithm is proposed for nonlinear bilevel optimization, with computational experiments at such a scale and such a level of success. 

The paper is organized as follows. In the next section, we recall some basic notions and properties, centered around the generalized first and second order differentiation of the LLVF \eqref{varphi}.  Section \ref{Necessary conditions for optimality} discusses the stationarity concept
that will be the basis for the semismoth system suggested later on. There, we recall some basic tools
and the general framework for deriving optimality conditions for the LLVF reformulation \eqref{LLVF}.
In Section \ref{The algorithm}, based on the reformulation in {\cite{FischerASpecial1992}, we
suggest a semismooth system of equations to rewrite the stationarity conditions and establish
a semismooth Newton method for computing stationary points.} In Section \ref{CD-regularity}, we derive sufficient conditions for the CD-regularity
of this semismooth system at a solution in order to guarantee superlinear or quadratic convergence.
%
Finally, Section \ref{Numerical experiments} presents results of a numerical study of the semismooth Newton method on the test problems in the BOLIB library \cite{BOLIB2017}.

\section{Preliminaries}
\label{prelim}

We first introduce some basic notation that will be used throughout. Depending on the
situation, both $y$ and $z$ will be used as lower-level variables.
According to this, for some $(\bar x,\bar y)$ and $(\bar x,\bar z)$, the index sets
\begin{equation}\label{I1u}
   I^1:= I^G(\bar x, \bar y):=  \left\{i\mid G_i(\bar x, \bar y)=0\right\}
\end{equation}
of the active upper-level constraints and
\begin{equation}\label{I1l}
  I^2:= I^g(\bar x, \bar y):= \left\{j\mid g_j(\bar x, \bar y)=0\right\}\quad\mbox{and}\quad
  I^3:= I^g(\bar x, \bar z)
\end{equation}
for the active lower-level constraints are defined.
Moreover, given a multiplier $\bar u\in\R^p$ for the upper-level constraint function $G$ and multipliers $\bar v, \bar w\in\R^q$, each for the lower-level constraint function $g$, then the index sets
\begin{equation}\label{multiplier sets}
\begin{array}{c}
 \eta^1  := \eta^G(\bar x, \bar y, \bar u) := \{i\mid \bar u_i =0, \, G_i(\bar x , \bar y)<0\},\\
\theta^1  :=  \theta^G(\bar x, \bar y, \bar u):= \{i\mid \bar u_i =0, \, G_i(\bar x , \bar y)=0\},\\
\nu^1  :=  \nu^G(\bar x, \bar y, \bar u):= \{i\mid \bar u_i>0, \, G_i(\bar x , \bar y)=0\}
\end{array}
\end{equation}
are defined as subsets of the upper-level indices $\{1,\ldots,p\}$ and, similarly,
\begin{equation}\label{nu2nu3}
    \begin{array}{lllcl}
  \eta^2:= \eta^g(\bar x, \bar y, \bar v),& \theta^2:=\theta^g(\bar x, \bar y, \bar v),&\mbox{and}& \nu^2:=\nu^g(\bar x, \bar y, \bar v),\\
  \eta^3:=\eta^g(\bar x, \bar z, \bar w),& \theta^3:=\theta^g(\bar x, \bar z, \bar w),&\mbox{and}& \nu^3:=\nu^g(\bar x, \bar z, \bar w)
\end{array}
\end{equation}
are defined as subsets of the lower-level indices $\{1,\ldots,q\}$.
The lower-level Lagrangian function $\ell$ defined from  $\R^n\times\R^m\times\R^q$ to $\mathbb{R}$ is given by
 \begin{equation}\label{ell}
    \ell(x,z,w):=f(x,z)+ w^\top g(x,z).
 \end{equation}
For a function depending on upper-level and (or) lower-level variables, we will often use the notations $\nabla_1$ and $\nabla_2$ to denote the gradients of this function w.r.t. the upper-level (resp. lower-level) variable, to avoid any potential confusion. Similarly, $\nabla^2_2$, $\nabla^2_1$, and $\nabla^2_{12}$ could be used when referring to second order derivatives.

Since the optimal value function $\varphi$ \eqref{varphi} is nondifferentiable in general, we  need generalized concepts of differentiability. Let us first recall the usual notion of the directional derivative. For a function $\psi : \mathbb{R}^n \to\mathbb{R}$, its directional derivative at $\bar x \in \mathbb{R}^n$ in the direction $d \in \mathbb{R}^n$ is the limit
\begin{equation}\label{Directional Derivative}
    \psi'(\bar x; d):=\underset{t\downarrow 0}\lim \frac{1}{t}\left[\psi(\bar x + td)-\psi(x)\right],
\end{equation}
provided it exists.
 Differentiable and convex (not necessarily differentiable) functions are examples of directionally differentiable functions \cite{RockafellarConvexAnalBook1970}. The optimal value function $\varphi$ \eqref{varphi} can be directionally differentiable without being differentiable or convex. To underline this, we are going to recall a result from \cite{GauvinDubeau1982}, that will play an important role in this paper.
 To proceed, we will say that the lower-level linear independence constraint qualification (LLICQ) holds at a point $(\bar x, \bar z)$ if the family of vectors
 \begin{equation}\label{LICQ}
 \left\{\nabla_2 g_i(\bar x, \bar z)\mid g_i(\bar x,z)=0\right\}
 \end{equation}
is linearly independent.
For any $(x,z)$, the set of lower-level Lagrange multipliers is given by
\begin{equation}\label{Lambda(x,y)}
\Lambda(x,z): = \big\{w\in \mathbb{R}^q\mid \nabla_2 \ell(x,z,w)=0, \; w\geq 0, \; g(x,z)\leq 0,\; w^{\top} g(x,z)= 0\big\}.
\end{equation}
Also, let us define the set-valued map $K:\R^n\rightrightarrows\R^m$ by
\begin{equation}\label{K(x)}
K(x):=\left\{z\in \mathbb{R}^m|\; g(x,z)\leq 0 \right\},
\end{equation}
which provides all lower-level feasible points for a given value of $x$. 
\begin{thm}{\cite[Corollary 4.4]{GauvinDubeau1982}}\label{Theorem-phi'}
Let  $K$ \eqref{K(x)} be uniformly compact near $\bar x$ (i.e., there is a neighborhood $U$ of $\bar x$ such  that the closure of the set ${\cup}_{x\in U}K(x)$ is compact) and $K(\bar x)\neq \emptyset$. Suppose that LLICQ
\eqref{LICQ} holds at $(\bar x, z)$ for all $z\in S(\bar x)$. Then, $\varphi$ is directionally differentiable at $\bar x$ in any direction $d\in \mathbb{R}^n$ with
\begin{equation}\label{varphi'}
    \varphi'(\bar x; d)= \underset{z\in S(\bar x)}\min\,\nabla_1 \ell(\bar x, z, w_z)^\top d,
\end{equation}
where, for any $z\in S(\bar x)$, $w_z$ is the unique element in $\Lambda(\bar x,z)$.
\end{thm}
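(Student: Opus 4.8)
The plan is to establish the formula by a two-sided estimate, proving separately that $\limsup_{t\downarrow 0} t^{-1}[\varphi(\bar x+td)-\varphi(\bar x)]$ is bounded above by the right-hand side of \eqref{varphi'} and that the corresponding $\liminf$ is bounded below by it; together these force the limit in \eqref{Directional Derivative} to exist and to equal $\min_{z\in S(\bar x)}\nabla_1\ell(\bar x,z,w_z)^\top d$. Before the two estimates I would record three consequences of the hypotheses. First, uniform compactness of $K$ \eqref{K(x)} near $\bar x$ together with $K(\bar x)\neq\emptyset$ and the standing assumption $S(x)\neq\emptyset$ make $\varphi$ continuous near $\bar x$ and keep any sequence of lower-level minimizers for nearby parameters in a fixed compact set. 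Second, LLICQ \eqref{LICQ} at $(\bar x,z)$ makes $\Lambda(\bar x,z)$ in \eqref{Lambda(x,y)} a singleton $\{w_z\}$, and, since linear independence is stable under small perturbations of the base point, this unique multiplier depends continuously on the active gradients and hence on $z$. Third, since $S(\bar x)$ is compact and $z\mapsto\nabla_1\ell(\bar x,z,w_z)^\top d$ is continuous, the minimum on the right-hand side of \eqref{varphi'} is attained.

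For the upper estimate I would fix an arbitrary $z\in S(\bar x)$ with active set $I:=I^g(\bar x,z)$ and build a feasible arc. By LLICQ the gradients $\{\nabla_2 g_i(\bar x,z)\}_{i\in I}$ are linearly independent, so the implicit function theorem provides a $C^1$ curve $t\mapsto z(t)$ with $z(0)=z$ and $g_i(\bar x+td,z(t))=0$ for all $i\in I$ and all small $t\ge 0$, while the inactive constraints stay strictly negative by continuity, so $z(t)\in K(\bar x+td)$. Hence $\varphi(\bar x+td)\le f(\bar x+td,z(t))$, and differentiating at $t=0$, then inserting the identity $\nabla_2 g_i(\bar x,z)^\top z'(0)=-\nabla_1 g_i(\bar x,z)^\top d$ (from differentiating the active-constraint equations) into the stationarity relation $\nabla_2 f(\bar x,z)=-\sum_{i\in I}(w_z)_i\nabla_2 g_i(\bar x,z)$, collapses the derivative to exactly $\nabla_1\ell(\bar x,z,w_z)^\top d$. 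This gives $\limsup_{t\downarrow 0}t^{-1}[\varphi(\bar x+td)-\varphi(\bar x)]\le\nabla_1\ell(\bar x,z,w_z)^\top d$, and minimizing over $z\in S(\bar x)$ yields the upper bound.

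For the lower estimate I would pick $t_k\downarrow 0$ realizing the $\liminf$ and choose $z_k\in S(\bar x+t_k d)$. Uniform compactness lets me pass to a subsequence with $z_k\to\bar z$, and continuity of $\varphi$ and $f$ forces $f(\bar x,\bar z)=\varphi(\bar x)$, i.e. $\bar z\in S(\bar x)$. Since LLICQ holds at $(\bar x,\bar z)$ and is stable, it also holds at $(\bar x+t_k d,z_k)$ for large $k$, so each $z_k$ carries a unique multiplier $w_k$, and the continuity noted above gives $w_k\to w_{\bar z}$. The clean way to finish is to run the arc construction in reverse: taking $(\bar x+t_k d,z_k)$ as base point and $-d$ as direction with step $t_k$ produces a curve landing in $K(\bar x)$, whence $\varphi(\bar x)\le f(\bar x+t_k d,z_k)-t_k\,\nabla_1\ell(\bar x+t_k d,z_k,w_k)^\top d+o(t_k)$. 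Rearranging and dividing by $t_k$ gives $t_k^{-1}[\varphi(\bar x+t_k d)-\varphi(\bar x)]\ge\nabla_1\ell(\bar x+t_k d,z_k,w_k)^\top d-o(1)$, and letting $k\to\infty$ yields $\liminf\ge\nabla_1\ell(\bar x,\bar z,w_{\bar z})^\top d\ge\min_{z\in S(\bar x)}\nabla_1\ell(\bar x,z,w_z)^\top d$, which is the desired lower bound.

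I expect the main obstacle to be the lower estimate, for two related reasons. The reverse-arc remainder $o(t_k)$ is generated at a base point $(\bar x+t_k d,z_k)$ that moves with $k$, so I must argue that the second-order error is $O(t_k^2)$ with a constant uniform over base points in a neighborhood of $(\bar x,\bar z)$; this requires carrying out the implicit function construction with constants controlled by the locally uniform LLICQ bound rather than pointwise. The second delicate point is the convergence $w_k\to w_{\bar z}$: one must show that the active set at $(\bar x+t_k d,z_k)$ is eventually contained in $I^g(\bar x,\bar z)$ and that the associated linear systems defining the multipliers converge, so that uniqueness under LLICQ upgrades to continuity of the multiplier along the sequence. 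Everything else, namely the chain-rule computation for the arc and the extraction of convergent minimizers, is routine once these two uniformity facts are in place.
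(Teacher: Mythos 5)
You should first know that the paper does not prove this statement at all: it is imported verbatim as \cite[Corollary 4.4]{GauvinDubeau1982}, so the only fair comparison is with the classical Gauvin--Dubeau argument, which your two-sided Dini estimate essentially reconstructs. Your preliminary facts are right (compactness of $S(\bar x)$ from uniform compactness of $K$ \eqref{K(x)}, singleton multipliers and their continuity on $S(\bar x)$ under LLICQ, continuity of $\varphi$ at $\bar x$ -- lower semicontinuity from uniform compactness plus closedness, upper semicontinuity along the ray already supplied by your own forward arc), and the upper estimate is correct as sketched: the implicit-function arc holding the active constraints at zero, combined with complementarity and the identity $\nabla_2 g_i(\bar x,z)^\top z'(0)=-\nabla_1 g_i(\bar x,z)^\top d$, does collapse the derivative to $\nabla_1\ell(\bar x,z,w_z)^\top d$, and minimizing over $z\in S(\bar x)$ gives the upper bound in \eqref{varphi'}.

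The genuine gap is in the feasibility of your reverse arc, and it is not one of the two delicate points you flagged. Your construction holds only the constraints in the active set $A_k$ at $(\bar x+t_kd,z_k)$ equal to zero and dismisses the others ``by continuity.'' That reasoning is valid at a fixed base point for all sufficiently small steps, but here the step length $t_k$ is prescribed by the sequence, while a constraint $j\in I^g(\bar x,\bar z)\setminus A_k$ -- inactive at $(\bar x+t_kd,z_k)$ but active in the limit -- may have slack $\epsilon_k=o(t_k)$; since the arc moves $g_j$ by an amount of order $t_k$, the endpoint can violate $g_j\le 0$, so it need not lie in $K(\bar x)$ and the key inequality $\varphi(\bar x)\le f(\bar x,\zeta_k(t_k))$ fails. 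The repair stays inside your framework: apply the uniform implicit function theorem to the enlarged system that holds \emph{every} $j\in I^g(\bar x,\bar z)$ at its initial value $g_j(\bar x+t_kd,z_k)\le 0$ rather than at zero. LLICQ at $(\bar x,\bar z)$ makes precisely these gradients linearly independent near $(\bar x,\bar z)$, the endpoint is then feasible because the remaining constraints have a uniform negative margin at $(\bar x,\bar z)$, and since $w_k$ is supported on $A_k\subseteq I^g(\bar x,\bar z)$ (eventually, after passing to a subsequence along which $A_k$ is constant), the derivative computation still collapses to $-\nabla_1\ell(\bar x+t_kd,z_k,w_k)^\top d$. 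With that correction, the two uniformity issues you did identify -- Taylor remainders with constants controlled by a locally uniform LLICQ bound, and $w_k\to w_{\bar z}$ via eventual active-set containment together with boundedness of $\{w_k\}$ by the usual normalization-and-contradiction argument -- are handled exactly as you propose, and the lower estimate closes the proof.
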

Note that the set of lower-level Lagrange multipliers $\Lambda(\bar x,z)$ is a singleton for all $z\in S(\bar x)$
because LLICQ is assumed for all these $z$. If the lower-level problem is convex (i.e., $f$ and $g_i$, $i=1, \ldots, p$ are convex functions),  a version of formula \eqref{varphi'} is given in \cite{golstein2008theory} without requiring uniqueness of lower-level Lagrange multipliers. In the absence of convexity, corresponding formulas of $\varphi'(\bar x; d)$ not requiring single-valuedness of $\Lambda$ are given in \cite{rockafellar1984directional}. However, in the latter case, some second order assumptions are imposed on problem \eqref{lower-level problem-0}. Also, the uniform compactness of $K$ can be replaced by other types of assumptions, i.e., the inner semicontinuity or compactness of the lower-level solution map $S$ \eqref{lower-level problem}; see \cite{MordukhovichBook2006} for details.

As it is shown in \cite{Ben-Tal1982}, one can define a second order directional derivative for
$\psi : \mathbb{R}^n \to \mathbb{R}$ at $\bar x$ in directions $d$ and $e$ by
 \begin{equation}\label{2ndOrderDirectional}
\psi''(\bar x; d, e):=
\lim_{t\downarrow 0} \frac{1}{t^2}\left[\psi\left(\bar x + td + \frac{1}{2}t^2e\right) - \psi(\bar x) -t\psi'(\bar x; d)\right],
 \end{equation}
provided the limit exists.
Next, we recall a formula for the second order directional derivative of the LLVF $\varphi$ \eqref{varphi}
developed in \cite{Shapiro1988}.
For this purpose, a few more assumptions are needed. A point $(\bar x, \bar z, \bar w)$ is said to satisfy the lower-level strict complementarity condition (LSCC)  if
\begin{equation}\label{LSCC}
\theta^3 =\{j\mid \bar w_j=g_j(\bar x,\bar z)=0\}=\emptyset.
\end{equation}
The lower-level submanifold property (LSMP) is said to hold at $(\bar x, d)$ if for all $\bar z$ such that
\begin{equation}\label{S1(x,d)}
\bar z\in S_1(\bar x; d):=\underset{z\in S(\bar x)}\argmin~\nabla_1\ell(\bar x, z, w_z)^\top d,
\end{equation}
the restriction of $S(\bar x)$ on a neighborhood of $\bar z$ is a smooth submanifold of
$
\left\{z\mid g_j(\bar x, z)=0\mbox{ for all }j\in I^3\right\}.
$
Finally, the lower-level second order sufficient condition (LSOSC) is said to hold at $(\bar x, d)$ if
\[
  e^\top \nabla^2_2\ell(\bar x, z, w_z)e >0
\]
for all $z\in S_1(\bar x; d)$ and all $e\in \left[T_{\circ}(z)\right]^{\perp} \setminus \{0\}$ such that $\nabla_2 g_j(\bar x, z)^\top e = 0$ for all $j\in I^3$.
Here $T_{\circ}(z)$ denotes the tangent space \cite{Lee}
of $S(\bar x)$ at $z$. For further details and discussions on these assumptions and related results, including the following one, see  \cite{BonnansShapiroBook2000, Shapiro1988}.
\begin{thm}\label{Theorem-phi''}
Let the assumptions of Theorem \ref{Theorem-phi'} be satisfied and
suppose that LSCC holds at $(\bar x, z, w)$ for all $z\in S_1(\bar x; d)$. If, additionally, the LSMP and LSOSC hold at $(\bar x, d)$, then $\varphi$ \eqref{varphi} is second order directionally differentiable at $\bar x$ in directions $d$ and $e$ with
\begin{equation}\label{varphi"}
    \varphi''(\bar x; d, e)= \underset{z\in S_1(\bar x; d)}\min \left\{\nabla_1\ell(\bar x, z, w_z)^\top e + \xi_d (\bar x, z)\right\},
\end{equation}
with  $S_1(\bar x; d)$ given in \eqref{S1(x,d)} while $\xi_d (\bar x, z)$ is defined by
\begin{equation}\label{varphi"-REST}
  \left\{\begin{array}{l}
       \xi_d (\bar x, z) := \underset{e\in \mathcal{Z}_d(\bar x, z)}\min~(d^\top, e^\top)\nabla^2 \ell(\bar x, z, w_z)(d^\top, e^\top)^\top,\\[1ex]
       \mathcal{Z}_d(\bar x, z) := \left\{e\in \mathbb{R}^m\left|\;\nabla_1 g_i(\bar x, z)^\top d + \nabla_2 g_i(\bar x, z)^\top e =0, \;\, i\in I^3\right.\right\}.
    \end{array}
    \right.
\end{equation}
\end{thm}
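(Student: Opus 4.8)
The plan is to prove \eqref{varphi"} by a two-sided bounding argument built directly on the definition \eqref{2ndOrderDirectional}. Set $x(t):=\bar x+td+\tfrac12 t^2e$ and recall from Theorem \ref{Theorem-phi'} that the first order derivative $\varphi'(\bar x;d)=\min_{z\in S(\bar x)}\nabla_1\ell(\bar x,z,w_z)^\top d$ is attained exactly on the set $S_1(\bar x;d)$ of \eqref{S1(x,d)}; the goal is then to show that the quotient $t^{-2}\bigl[\varphi(x(t))-\varphi(\bar x)-t\varphi'(\bar x;d)\bigr]$ tends to the right-hand side of \eqref{varphi"}. (I note at the outset that, with the literal denominator $t^2$, a smooth Taylor expansion produces one half of the stated expression, so the parabolic normalization should really carry a factor $\tfrac12$; I would fix this convention before starting so that the curvature term $(d^\top,e^\top)\nabla^2\ell(d^\top,e^\top)^\top$ appears with the right weight.) The structural reduction that makes everything work is that, under LSCC together with LLICQ, strict complementarity and linear independence of the active lower-level gradients are stable under small perturbations, so that near any $z\in S_1(\bar x;d)$ the active set stays equal to $I^3$ and the lower-level inequality problem behaves locally like its equality-constrained restriction to $\{z\mid g_j(\bar x,z)=0,\ j\in I^3\}$; LSMP places $S(\bar x)$ inside this set as a smooth submanifold, which lets me split variations of $z$ cleanly into tangential and normal parts.

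For the upper bound I would proceed constructively. Fix $z_0\in S_1(\bar x;d)$ and a minimizer $\hat e\in\mathcal{Z}_d(\bar x,z_0)$ of the quadratic form defining $\xi_d(\bar x,z_0)$ in \eqref{varphi"-REST}. Using LLICQ and the implicit function theorem I build a twice differentiable path $z(t)$ with $z(0)=z_0$ and $\dot z(0)=\hat e$ whose second order coefficient is chosen so that $g_j(x(t),z(t))=0$ for all $j\in I^3$ up to order $t^2$; by continuity the inactive constraints stay negative, so $z(t)$ is lower-level feasible and $\varphi(x(t))\le f(x(t),z(t))$. Since $w_{z_0}$ vanishes on inactive indices while the active constraints are held at zero, one has $f(x(t),z(t))=\ell(x(t),z(t),w_{z_0})$ along the path, and expanding $\ell$ to second order while using $\nabla_2\ell(\bar x,z_0,w_{z_0})=0$ annihilates both the first order term in $\hat e$ and the contribution of the unknown second order coefficient, leaving exactly $\nabla_1\ell(\bar x,z_0,w_{z_0})^\top e+(d^\top,\hat e^\top)\nabla^2\ell(\bar x,z_0,w_{z_0})(d^\top,\hat e^\top)^\top$. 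Passing to the limit and minimizing over $z_0\in S_1(\bar x;d)$ gives the upper bound.

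The lower bound is the main obstacle. For $t_k\downarrow 0$ choose optimal $z_k\in S(x(t_k))$; uniform compactness of $K$ keeps $(z_k)$ bounded, so along a subsequence $z_k\to z^*\in S(\bar x)$. First I must show $z^*\in S_1(\bar x;d)$, which follows because otherwise $\nabla_1\ell(\bar x,z^*,w_{z^*})^\top d>\varphi'(\bar x;d)$ would force the quotient to diverge, contradicting optimality of $z_k$ against the feasible path from the upper-bound step. The delicate step is the sharp rate $\|z_k-z^*\|=O(t_k)$: here LSOSC supplies quadratic growth of $\nabla^2_2\ell$ on the critical subspace, controlling the normal component of $z_k-z^*$, while optimality together with $z^*\in S_1(\bar x;d)$ forces the tangential component to be $O(t_k)$ as well. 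Granting this, I pass to a further subsequence with $(z_k-z^*)/t_k\to a$; differentiating the active constraints $g_j(x(t_k),z_k)=0$ shows $a\in\mathcal{Z}_d(\bar x,z^*)$, and the same expansion of $f(x(t_k),z_k)=\ell(x(t_k),z_k,w_{z^*})$ as above---again using $\nabla_2\ell(\bar x,z^*,w_{z^*})=0$ so that the $o(t_k)$ error in $z_k$ is harmless---yields $\liminf_k t_k^{-2}[\cdots]\ge\nabla_1\ell(\bar x,z^*,w_{z^*})^\top e+(d^\top,a^\top)\nabla^2\ell(\bar x,z^*,w_{z^*})(d^\top,a^\top)^\top\ge$ the minimum in \eqref{varphi"}, the last inequality by the definition of $\xi_d(\bar x,z^*)$. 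Because every subsequential liminf dominates this minimum while the upper bound equals it, the full limit exists and coincides with the right-hand side of \eqref{varphi"}.
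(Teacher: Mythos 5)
First, a point of reference: the paper does not prove this theorem at all --- it is recalled from \cite{Shapiro1988} (see also \cite{BonnansShapiroBook2000, ShapiroSecondOrder1985}), so your attempt can only be measured against the standard proof in those sources, whose route you have correctly identified: two-sided bounds on the parabolic difference quotient, with the upper bound built from feasible paths and the lower bound from accumulation points of lower-level minimizers. Your normalization remark is also correct and worth making: the definition \eqref{2ndOrderDirectional} as printed carries $1/t^2$, whereas the formula \eqref{varphi"} (and the paper's own later computation in the proof of Theorem \ref{SuffNon-NEW}, which uses $1/(t^2/2)$) requires the Ben-Tal--Zowe factor $2/t^2$. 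Your upper bound is sound: LLICQ plus the implicit function theorem yields a twice differentiable feasible path with $\dot z(0)=\hat e\in\mathcal{Z}_d(\bar x,z_0)$, complementarity gives $f(x(t),z(t))=\ell(x(t),z(t),w_{z_0})$, and $\nabla_2\ell(\bar x,z_0,w_{z_0})=0$ removes the unknown second-order coefficient (modulo the minor point that $\hat e$ should be an $\varepsilon$-minimizer if the minimum in \eqref{varphi"-REST} is not attained).

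The genuine gap is the rate claim $\|z_k-z^*\|=O(t_k)$ in your lower bound. LSOSC provides quadratic growth only in directions normal to the solution manifold, so it controls the normal component of $z_k-z^*$; but your assertion that optimality together with $z^*\in S_1(\bar x;d)$ ``forces the tangential component to be $O(t_k)$ as well'' is unsupported and in general false. Tangential motion along $S(\bar x)$ costs nothing at zeroth order and is penalized at order $t_k$ only through the function $z\mapsto\nabla_1\ell(\bar x,z,w_z)^\top d$, which is merely minimized on $S_1(\bar x;d)$, not sharply: if, say, this function is constant along $S(\bar x)$ (so that $S_1(\bar x;d)=S(\bar x)$), the minimizers $z_k$ may drift tangentially at rates slower than $O(t_k)$, the quotients $(z_k-z^*)/t_k$ then have no convergent subsequence, and your Taylor expansion around the fixed point $z^*$ produces remainders that are not $o(t_k^2)$ (the cancellation via $\nabla_2\ell=0$ kills only the first-order term in $z_k-z^*$, not the Taylor remainder, which needs $\|z_k-z^*\|=O(t_k)$). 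The standard repair --- and where LSMP actually enters in Shapiro's argument --- is to expand around the moving projection $\pi_k$ of $z_k$ onto the manifold $S(\bar x)$ rather than around $z^*$: LSCC and LLICQ keep the active set equal to $I^3$ near $S_1(\bar x;d)$, LSOSC bounds the normal displacement $\|z_k-\pi_k\|=O(t_k)$, stationarity $\nabla_2\ell(\bar x,\pi_k,w_{\pi_k})=0$ annihilates the uncontrolled tangential part, and compactness of $S_1(\bar x;d)$ together with continuity of $z\mapsto\nabla_1\ell(\bar x,z,w_z)^\top e+\xi_d(\bar x,z)$ lets you pass to the minimum in \eqref{varphi"}. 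With that substitution your scheme closes; as written, the lower bound does not.
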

Note that $\nabla^2 \ell$ stands for the Hessian w.r.t. the vector made of the first and second variables of the function $\ell$ \eqref{ell}.
We can simplify the assumptions needed in this theorem if we impose that the lower-level problem \eqref{lower-level problem} has a unique optimal solution for $x:=\bar x$; cf. the following theorem from \cite{ShapiroSecondOrder1985}.
\begin{thm}\label{Theorem-phi''0} Let the assumptions of Theorem \ref{Theorem-phi'} be satisfied with $S(\bar x)=\{\bar y\}$ and suppose that
$$
e^\top \nabla^2_2\ell(\bar x, \bar y, \bar w)e >0, \;\;\forall e \neq 0\, \mbox{ s.t. }\;\;
\left\{\begin{array}{ll}
\nabla_2 g_i(\bar x, \bar y)^\top e = 0 & \mbox{for }\; i\in \nu^3,\\
\nabla_2 g_i(\bar x, \bar y)^\top e \leq 0 & \mbox{for }\; i\in \theta^3.
\end{array}\right.
$$
Then, we have the expression
\begin{equation}\label{varphi"01}
    \varphi''(\bar x; d, e)= \nabla_x \ell(\bar x, \bar y, \bar w)^\top e + \xi_{d} (\bar x, \bar y),
\end{equation}
where $\xi_d (\bar x, \bar y)$ is defined as in \eqref{varphi"-REST}, with $z= \bar y$, and $\mathcal{Z}_d(\bar x, \bar y)$ given by
$$
\begin{array}{ll}
  \mathcal{Z}_d(\bar x, \bar y)  :=  \{ e\,| &\nabla_1 g_i(\bar x, \bar y)^\top d + \nabla_2 g_i(\bar x, \bar y)^\top e = 0 \;\mbox{ for } \; i\in \nu^3,\\
                                             & \left. \nabla_1 g_i(\bar x, \bar y)^\top d + \nabla_2 g_i(\bar x, \bar y)^\top e \leq 0 \; \mbox{ for } \; i\in \theta^3 \right\}.
\end{array}
$$
\end{thm}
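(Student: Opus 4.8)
The plan is to derive \eqref{varphi"01} directly by a parametric second-order sensitivity analysis of the lower-level problem along a parabolic path, rather than by specializing Theorem \ref{Theorem-phi''}: the latter is unavailable here because it presupposes strict complementarity ($\theta^3=\emptyset$), whereas the present statement explicitly allows weakly active constraints $i\in\theta^3$, trading this off against the uniqueness hypothesis $S(\bar x)=\{\bar y\}$. First I would dispose of the outer structure. Since $S(\bar x)=\{\bar y\}$, the set $S_1(\bar x;d)$ in \eqref{S1(x,d)} is automatically the singleton $\{\bar y\}$, so the outer minimization in \eqref{varphi"} collapses and only the single branch at $\bar y$ must be analyzed. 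By the LLICQ \eqref{LICQ} assumed in Theorem \ref{Theorem-phi'}, the multiplier set $\Lambda(\bar x,\bar y)$ is the singleton $\{\bar w\}$, and the first-order formula \eqref{varphi'} reduces to $\varphi'(\bar x;d)=\nabla_1\ell(\bar x,\bar y,\bar w)^\top d$. Next I would record the stability input: the displayed positive-definiteness of $\nabla^2_2\ell(\bar x,\bar y,\bar w)$ on the critical cone, together with LLICQ, yields a standard strong-stability conclusion, namely that for $x$ near $\bar x$ the lower-level problem has a locally unique minimizer $y(x)$ with $y(\bar x)=\bar y$, depending Lipschitz-continuously on $x$, and a correspondingly unique, continuous multiplier $w(x)$. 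This is precisely the SOSC-plus-LICQ regime in which strong stability holds \emph{without} requiring strict complementarity.

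Second, I would set up the parabolic path. Fix $d,e$, put $x(t):=\bar x+td+\tfrac12 t^2 e$, and let $y(t):=y(x(t))$ with multiplier $w(t)$, so that $\varphi(x(t))=f(x(t),y(t))=\ell(x(t),y(t),w(t))$ for small $t>0$. The heart of the first-order analysis is to show that every directional limit $\delta:=\lim_{t\downarrow 0}(y(t)-\bar y)/t$ lies in the cone $\mathcal{Z}_d(\bar x,\bar y)$ of the statement: feasibility and complementarity force $\nabla_1 g_i(\bar x,\bar y)^\top d+\nabla_2 g_i(\bar x,\bar y)^\top\delta=0$ on the strongly active set $\nu^3$ (where $\bar w_i>0$ pins the linearized constraint to equality) and only $\le 0$ on the weakly active set $\theta^3$ (where $\bar w_i=0$ leaves one-sided slack). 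This equality/inequality split is exactly the new feature relative to Theorem \ref{Theorem-phi''}.

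Third, I would expand the second-order difference quotient
\[
\frac{1}{t^2}\left[\varphi(x(t)) - \varphi(\bar x) - t\,\varphi'(\bar x;d)\right]
\]
by inserting $x(t)$, $y(t)$ and the second-order Taylor expansions of $f$ and $g$ about $(\bar x,\bar y)$, writing everything through the Lagrangian $\ell$ and using stationarity $\nabla_2\ell(\bar x,\bar y,\bar w)=0$ and complementarity to kill the first-order terms. The $\tfrac12 t^2 e$ component of $x(t)$ produces the linear term $\nabla_1\ell(\bar x,\bar y,\bar w)^\top e$ (which is the $\nabla_x\ell(\bar x,\bar y,\bar w)^\top e$ of the statement), while the remaining part collects the Hessian form $(d^\top,\delta^\top)\nabla^2\ell(\bar x,\bar y,\bar w)(d^\top,\delta^\top)^\top$ evaluated along the solution velocity $\delta$. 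Optimality of $y(t)$ for each $t$ then forces $\delta$ to minimize this quadratic form over $\mathcal{Z}_d(\bar x,\bar y)$ in the limit, which is exactly $\xi_d(\bar x,\bar y)$ from \eqref{varphi"-REST}; adding the two contributions gives \eqref{varphi"01}.

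The main obstacle is the rigorous two-sided estimate establishing that the limit exists and equals the claimed value, and this is where the inequality constraints on $\theta^3$ must be handled with care. For the upper bound I would fix a minimizer $\delta^*$ of the quadratic form over $\mathcal{Z}_d(\bar x,\bar y)$ and construct an admissible feasible path $z(t)=\bar y+t\delta^*+\tfrac12 t^2\zeta$, choosing the second-order correction $\zeta$ (solvable via LLICQ) so that $g_i(x(t),z(t))\le 0$ is restored for the weakly active indices; evaluating $f(x(t),z(t))$ and taking $\limsup$ bounds $\varphi''(\bar x;d,e)$ from above by the formula. For the lower bound I would use that the true minimizer velocity stays in $\mathcal{Z}_d(\bar x,\bar y)$ and that the coercivity of $\nabla^2_2\ell$ on the critical cone controls the quadratic remainder from below, ruling out degenerate behaviour and forcing the $\liminf$ to match. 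Verifying feasibility of the constructed path for the one-sided $\theta^3$-constraints, and justifying the passage to the limit for those same constraints in the lower bound, is the technical crux that separates this argument from the strict-complementarity setting of Theorem \ref{Theorem-phi''}.
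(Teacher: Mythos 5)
You should first know that the paper contains no proof of this theorem: it is quoted from Shapiro \cite{ShapiroSecondOrder1985} (``cf.\ the following theorem from \cite{ShapiroSecondOrder1985}''), so your blind attempt can only be measured against the literature argument --- which your plan in fact mirrors quite closely: the parabolic path $x(t)=\bar x+td+\tfrac12 t^2 e$, the upper estimate via a constructed feasible path $z(t)=\bar y+t\delta^*+\tfrac12 t^2\zeta$ with the second-order correction supplied by LLICQ, and the lower estimate via coercivity of $\nabla^2_2\ell(\bar x,\bar y,\bar w)$ on the critical cone is exactly Shapiro's architecture. Your structural observations are also correct: $S_1(\bar x;d)=\{\bar y\}$ collapses the outer minimization, LLICQ makes $\Lambda(\bar x,\bar y)=\{\bar w\}$, and Theorem \ref{Theorem-phi''} cannot simply be specialized because it presupposes LSCC ($\theta^3=\emptyset$), which this statement deliberately drops; the equality/inequality split between $\nu^3$ and $\theta^3$ in $\mathcal{Z}_d(\bar x,\bar y)$ is indeed the new feature.

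The genuine gap is your stability step. You claim that the displayed second-order condition together with LLICQ yields strong stability --- a locally unique minimizer $y(x)$ depending Lipschitz-continuously on $x$, with a continuous multiplier $w(x)$ --- ``without requiring strict complementarity.'' That is false: under LICQ, strong stability in Kojima's sense (equivalently Robinson's strong regularity of the KKT system) is characterized by the \emph{strong} second-order sufficient condition, i.e., positive definiteness of $\nabla^2_2\ell(\bar x,\bar y,\bar w)$ on the subspace $\{e\mid \nabla_2 g_i(\bar x,\bar y)^\top e=0,\ i\in\nu^3\}$ with no restriction on $\theta^3$. The hypothesis here is only the ordinary SOSC (positivity on the critical cone, with one-sided conditions on $\theta^3$), which in general gives neither single-valuedness nor a Lipschitz rate for $y(x)$ at nearby parameters. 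Since both your extraction of the velocity $\delta=\lim_{t\downarrow 0}(y(t)-\bar y)/t$ and your lower bound lean on that rate, the sketch as written has a hole. The repair is standard and stays within your own framework: first obtain $\|y(t)-\bar y\|=O(\sqrt t)$ from quadratic growth at $\bar y$ (a consequence of SOSC) combined with the Lipschitz behaviour of the feasible-set map under LLICQ and the $O(t)$ variation of $\varphi$ along the path; then bootstrap to $\|y(t)-\bar y\|=O(t)$ by inserting this into the expansion of $\ell(x(t),\cdot,\bar w)$, where coercivity on the critical cone (with off-cone components controlled by the constraint residuals) turns the upper estimate $\varphi(x(t))\le\varphi(\bar x)+t\,\varphi'(\bar x;d)+O(t^2)$ into $c\|y(t)-\bar y\|^2\le Ct\|y(t)-\bar y\|+Ct^2$. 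One then argues with limit points of $(y(t)-\bar y)/t$ along subsequences rather than an assumed limit, shows each limit point lies in $\mathcal{Z}_d(\bar x,\bar y)$, and lets the matching bounds force the value $\xi_d(\bar x,\bar y)$; note finally that this minimum is attained because the recession cone of $\mathcal{Z}_d(\bar x,\bar y)$ is precisely the cone on which the Hessian is assumed positive definite. With these substitutions your outline becomes, in substance, the proof in \cite{ShapiroSecondOrder1985}.
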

To extend the concept of the directional derivative to a wider class of functions the notion of a generalized directional derivative was introduced in \cite{ClarkeBook1983} for a function $\psi:\mathbb{R}^n\to \mathbb{R}$ by
 \begin{equation}\label{Clarke Directional Derivative}
    \psi^o(\bar x; d):=\underset{t\downarrow 0}{\underset{x \rightarrow \bar x}\limsup} \frac{1}{t}\left[\psi(x + td)-\psi(x)\right].
\end{equation}
This quantity exists if $\psi$ is Lipschtiz continuous around $\bar x$ \cite[Proposition 2.1.1]{ClarkeBook1983}. Utilizing this notion, Clarke also introduced the  {generalized subdifferential}
\begin{equation}\label{Clarke Subdifferential}
    \partial \psi(\bar x):= \left\{\zeta\in \mathbb{R}^n\mid \psi^o(\bar x; d)\ge\langle\zeta,d\rangle
    \mbox{ for all } d\in \mathbb{R}^n  \right\}.
\end{equation}
Furthermore, if $\psi$ is Lipschitz continuous around $\bar x$, it is is differentiable almost everywhere in
some neighborhood of this point; hence the subdiffential \eqref{Clarke Subdifferential} can also be written as
\begin{equation}\label{Clarke Subdifferential-Derivative}
    \partial \psi(\bar x)= \mbox{co}\partial_B\psi(\bar x)\quad\mbox{with}\quad\partial_B\psi(\bar x):=
    \left\{ \lim~\nabla \psi(x^n)\mid x^n \rightarrow \bar x, \;\, x^n\in D_\psi  \right\},
\end{equation}
where ``co'' stands for the convex hull, $D_\psi$ represents the set of points where $\psi$ is differentiable, and $\partial_B\psi(\bar x)$ is called B-subdifferential of $\psi$ at $\bar x$. For vector valued functions $\psi$, the notions $\partial_B\psi(\bar x)$ and $\partial\psi(\bar x)$ in
\eqref{Clarke Subdifferential-Derivative} remain valid with $\nabla \psi$ denoting the Jacobian of $\psi$ at points where $\psi$ is differentiable. Then, the set $\partial\psi(\bar x)$ is called generalized Jacobian of $\psi$ at $\bar x$.

To complete this section, we briefly review the concept of semismoothness \cite{Mifflin1977}, which is used for the convergence result of the Newton method to be discussed in this paper.
Let a function  $\psi : \mathbb{R}^{n} \rightarrow \mathbb{R}^{m}$ be Lipschitz continuous around $\bar x$. Then,
$\psi$ is called semismooth at $\bar x$ if the limit
\[
\lim \left\{Vd'\mid V\in \partial \psi (\bar x+ td'), \; d' \rightarrow d, \; t\downarrow 0\right\}
\]
exists for all $d\in \mathbb{R}^n$. If, in addition,
\[
Vd - \psi'(\bar x; d) = O(\|d\|^2)
\]
holds for all $V\in \partial \psi (\bar x+d)$ with $d \rightarrow 0$, then $\psi$ is said to be strongly semismooth at $\bar x$.

The function $\psi$ will be said to be  $\psi$ is SC$^1$ if it is continuously differentiable and $\nabla \psi$ is semismooth. Also, $\psi$ is called LC$^2$ function if $\psi$ is twice continuously differentiable and $\nabla^2 \psi$ is locally Lipschitzian.

\section{Necessary conditions for optimality}\label{Necessary conditions for optimality}

The standard approach to derive necessary optimality conditions for the LLVF reformulation \eqref{LLVF} of the bilevel optimization problem \eqref{P} is to consider the  partial penalization
\begin{equation}\label{Penalized-LLVF}
    \underset{x,y}\min~F(x,y) + \lambda (f(x,y) -\varphi(x)) \quad\mbox{s.t.} \quad G(x,y)\leq 0, \; g(x,y)\leq 0,
\end{equation}
where $\lambda\in(0,\infty)$ denotes the penalization parameter.
To deal with the fact that no standard constraint qualification holds for problem \eqref{LLVF} \cite{YeZhuOptCondForBilevel1995}, the authors of the latter paper introduced the partial calmness concept and showed its benefit for obtaining KKT
 conditions for \eqref{LLVF}. More in detail, problem \eqref{LLVF} is said to be partially calm at one of its feasible points $(\bar{x} ,\bar{y} )$ if there exist $\lambda\in(0, \infty)$ and a neighborhood $U$ of $(\bar x,\bar y,0)\in\R^n\times\R^m\times\R$ such that
\begin{equation}\label{partial calmness}
\begin{array}{l}
  F(x,y)-F(\bar{x} , \bar{y} )+\lambda |\varsigma|\geq 0 \\
  \mbox{for all}\;(x,y,\varsigma)\in U\;\mbox{with}\; G(x,y)\leq 0,\, g(x,y)\leq 0, \; f(x,y)- \varphi(x)+\varsigma=0.
\end{array}
\end{equation}
Based on this, the following result can be easily derived.
\begin{thm}\label{equivalen}
 Let  $(\bar x, \bar y)$ be locally optimal for problem \eqref{LLVF}. Then, the latter problem is partially calm at $(\bar x, \bar y)$ if and only if there exists $\lambda \in (0, \infty)$ such that $(\bar x,\bar y)$ is locally optimal for problem \eqref{Penalized-LLVF}.
\end{thm}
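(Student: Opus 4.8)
The plan is to prove both implications directly from the definitions, exploiting one structural observation that makes the absolute value in the partial calmness inequality \eqref{partial calmness} collapse onto the penalty term of \eqref{Penalized-LLVF}. First I would record this observation. Since $(\bar x,\bar y)$ is feasible for \eqref{LLVF}, we have $g(\bar x,\bar y)\le 0$ and $f(\bar x,\bar y)\le\varphi(\bar x)$; but $g(\bar x,\bar y)\le 0$ means $\bar y$ is lower-level feasible at $\bar x$, so $f(\bar x,\bar y)\ge\varphi(\bar x)$ by the very definition \eqref{varphi} of $\varphi$. Hence $f(\bar x,\bar y)=\varphi(\bar x)$, and the penalty term vanishes at the reference point. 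The same argument applies to any $(x,y)$ with $g(x,y)\le 0$: then $y\in K(x)$ and therefore $f(x,y)-\varphi(x)\ge 0$. Consequently, whenever a triple $(x,y,\varsigma)$ satisfies the constraints listed in \eqref{partial calmness}, the identity $\varsigma=\varphi(x)-f(x,y)\le 0$ forces $|\varsigma|=f(x,y)-\varphi(x)$, so that $\lambda|\varsigma|$ coincides exactly with the penalty $\lambda(f(x,y)-\varphi(x))$ in \eqref{Penalized-LLVF}.

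With this in hand, the implication ``penalized local optimality $\Rightarrow$ partial calmness'' is immediate. Assuming $(\bar x,\bar y)$ is locally optimal for \eqref{Penalized-LLVF} on some neighborhood $V$, I would take a neighborhood $U$ of $(\bar x,\bar y,0)$ whose projection onto the $(x,y)$-variables lies in $V$. For any $(x,y,\varsigma)\in U$ admissible in the sense of \eqref{partial calmness}, the local optimality inequality reads $F(x,y)+\lambda(f(x,y)-\varphi(x))\ge F(\bar x,\bar y)$, since the penalty vanishes at $(\bar x,\bar y)$; substituting $\lambda(f(x,y)-\varphi(x))=\lambda|\varsigma|$ then yields precisely \eqref{partial calmness}.

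For the converse, I would start from partial calmness with parameter $\lambda$ and neighborhood $U$, and take any feasible $(x,y)$ of \eqref{Penalized-LLVF} close to $(\bar x,\bar y)$. Setting $\varsigma:=\varphi(x)-f(x,y)\le 0$ makes the value-function constraint $f(x,y)-\varphi(x)+\varsigma=0$ hold; the point is then to guarantee $(x,y,\varsigma)\in U$. Here lies the only genuine obstacle: one must ensure $\varsigma\to 0$ as $(x,y)\to(\bar x,\bar y)$, i.e.\ that $f(x,y)-\varphi(x)\to 0$. Since $f$ is continuous and $f(\bar x,\bar y)=\varphi(\bar x)$, this reduces to (lower semi)continuity of $\varphi$ at $\bar x$, which is available under the standing assumptions of Section~\ref{prelim} (e.g.\ the uniform compactness of $K$ near $\bar x$ underlying Theorem~\ref{Theorem-phi'}). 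Granting this, $(x,y,\varsigma)\in U$ for $(x,y)$ in a sufficiently small neighborhood, and \eqref{partial calmness} together with $|\varsigma|=f(x,y)-\varphi(x)$ gives $F(x,y)-F(\bar x,\bar y)+\lambda(f(x,y)-\varphi(x))\ge 0$, which is exactly the local optimality of $(\bar x,\bar y)$ for \eqref{Penalized-LLVF}.

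I expect the sign bookkeeping to be the conceptual crux: the equivalence works only because the lower-level feasibility restriction $g(x,y)\le 0$ pins $\varsigma$ to be nonpositive, collapsing $\lambda|\varsigma|$ onto the one-sided penalty. The accompanying technical point is the continuity of $\varphi$ needed to keep the perturbed triple $(x,y,\varsigma)$ inside the neighborhood $U$ in the converse direction; this is the single hypothesis that must be explicitly invoked rather than derived, and it is precisely what makes the result ``easily derived'' within the present framework.
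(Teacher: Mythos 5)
Your sign analysis and the forward direction are correct and match the standard route: since $g(x,y)\le 0$ forces $f(x,y)\ge\varphi(x)$, any admissible $\varsigma$ in \eqref{partial calmness} is nonpositive and $\lambda|\varsigma|$ collapses onto the penalty term, and the implication from penalized local optimality to partial calmness then follows exactly as you write it. The genuine gap is in the converse. You invoke lower semicontinuity of $\varphi$ at $\bar x$ to keep the triple $(x,y,\varsigma)$ inside $U$, and you claim this is available from the standing assumptions; it is not. The paper's standing assumptions are only that $S(x)\neq\emptyset$ for all $x$ and that the data are twice continuously differentiable; uniform compactness of $K$ is a hypothesis of Theorem \ref{Theorem-phi'} alone, and Theorem \ref{equivalen} is stated (and later used) without it. Moreover, lsc of $\varphi$ can genuinely fail under the standing assumptions: take $q=1$, $g\equiv-1$ (so the lower-level problem is unconstrained) and $f(x,z)=(1-xz)^2$; then $S(x)\neq\emptyset$ for every $x$, yet $\varphi(x)=0$ for $x\neq 0$ while $\varphi(0)=1$. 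So as written, your argument proves a strictly weaker statement carrying an extra hypothesis.

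The fix is that the neighborhood-membership issue can be dispatched using continuity of $F$ alone, via a case distinction on the size of the penalty term, so that no regularity of $\varphi$ beyond finite-valuedness is ever needed. Choose $r,\delta>0$ with $B_r(\bar x,\bar y)\times(-\delta,\delta)\subseteq U$, and shrink to $r'\le r$ so that $F(x,y)\ge F(\bar x,\bar y)-\tfrac{\lambda\delta}{2}$ for all $(x,y)\in B_{r'}(\bar x,\bar y)$. For any feasible $(x,y)\in B_{r'}(\bar x,\bar y)$ put $t:=f(x,y)-\varphi(x)\ge 0$. If $t\ge\delta/2$, then
\begin{equation*}
F(x,y)+\lambda t \;\ge\; F(\bar x,\bar y)-\tfrac{\lambda\delta}{2}+\tfrac{\lambda\delta}{2} \;=\; F(\bar x,\bar y),
\end{equation*}
so the penalized optimality inequality holds trivially, with no need for $(x,y,-t)$ to lie in $U$ at all. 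If instead $t<\delta/2$, then $(x,y,-t)\in U$ and \eqref{partial calmness} yields $F(x,y)-F(\bar x,\bar y)+\lambda t\ge 0$. Since $f(\bar x,\bar y)=\varphi(\bar x)$ (your observation), this is exactly local optimality of $(\bar x,\bar y)$ for \eqref{Penalized-LLVF} with the same $\lambda$. In short: what you identified as "the single hypothesis that must be explicitly invoked" is in fact avoidable, and avoiding it is precisely what makes the theorem hold in the generality in which the paper states it.
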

Using this theorem, we are now going to establish the necessary optimality conditions that will be the basis of the Newton method in this paper. To proceed, we first need two constraint qualifications. The upper-level Mangasarian-Fromowitz constraint qualification (UMFCQ) will be said to hold at  $(\bar x, \bar y)$ if there exists $d\in \mathbb{R}^{n+m}$ such that
\begin{equation}\label{MFCQ-UL}
   \nabla G_i (\bar x, \bar y)^\top d < 0\quad\mbox{for all}\; i\in I^1 \quad \mbox{and} \quad
   \nabla g_j (\bar x, \bar y)^\top d < 0\quad\mbox{for all}\;j\in I^2.
\end{equation}
The lower-level Mangasarian-Fromowitz constraint qualification (LMFCQ) is satisfied at $\bar x$ if, for any $z\in S(\bar x)$, there exists $d\in \mathbb{R}^m$ such that
\begin{equation}\label{MFCQ-follower}
 \nabla_2 g_j (\bar x,z)^\top d < 0\quad\mbox{for all}\; j\mbox{ with }g_j(\bar x,z)=0.
\end{equation}
Further note that a real-valued function $(x,y) \mapsto \psi(x,y)$ will be said to be fully convex
if it is convex w.r.t. to all variables. The lower-level problem is usually said to be convex if $f$ and $g_j$ ($j=1, \ldots, q$) are convex just w.r.t. the lower-level variable. One can easily check that if a  function $(x,y) \mapsto \psi(x,y)$ is fully convex, then it is convex w.r.t. $x$ and it is convex w.r.t. $y$.
\begin{thm}\label{KN stationarity partial calm}
Let $(\bar x,\bar y)$ be a local optimal solution of problem \eqref{LLVF}, assumed to the partially calm at $(\bar x, \bar y)$. The functions 
$f$ and $g_1,\ldots,g_q$ are assumed to be fully convex. Furthermore, suppose  
that LMFCQ holds at $\bar x$, and that UMFCQ holds at $(\bar x,\bar y)$. Then, there exist $\lambda \in (0,\infty)$, $u\in \mathbb{R}^p$, $(v, w)\in \R^q\times\R^q$, and $z\in \mathbb{R}^m$ such that the following system holds for $(x,y)=(\bar x,\bar y)$:
 \begin{align}
 \nabla_1F(x,y) + \nabla_1 G(x,y)u +\nabla_1 g(x,y)v+\lambda \nabla_1 f(x,y) - \lambda \nabla_1 \ell(x, z, w)&=0,\label{KS-1}\\
\nabla_2 F(x,y) + \nabla_2 G(x,y)u +\nabla_2g(x,y)v +\lambda\nabla_2 f(x,y) &=0, \label{VS-2}\\
\nabla_2f(x, z)+\nabla_2g(x,z)w&=0,\label{KS-2}\\
u\geq 0, \; G(x,y)\leq 0,\; u^{\top}G(x,y)&= 0,\label{VS-3}\\
v\geq 0, \; g(x,y)\leq 0,\; v^{\top} g(x,y)&= 0, \label{VS-4}\\
w\geq 0, \; g(x,z)\leq 0,\; w^{\top} g(x,z)&= 0. \label{KS-3}
 \end{align}
\end{thm}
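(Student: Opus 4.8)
The plan is to use partial calmness to pass to the smooth-constraint penalized problem \eqref{Penalized-LLVF}, then to write a first-order necessary condition for that problem and convert it into the stated system by Farkas' lemma, handling the only genuinely nonsmooth object, the value function $\varphi$, through a single carefully chosen subgradient. First, since $(\bar x,\bar y)$ is locally optimal for \eqref{LLVF} and the problem is partially calm there, Theorem \ref{equivalen} produces some $\lambda\in(0,\infty)$ for which $(\bar x,\bar y)$ is a local minimizer of $\Theta(x,y):=F(x,y)+\lambda f(x,y)-\lambda\varphi(x)$ over the smooth feasible set $C:=\{(x,y):G(x,y)\le 0,\ g(x,y)\le 0\}$. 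This is the $\lambda$ appearing in the conclusion.

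Next I would produce the lower-level data $(z,w)$. Choose any $\bar z\in S(\bar x)$ (nonempty by assumption), so $\varphi(\bar x)=f(\bar x,\bar z)$; since $f,g_1,\dots,g_q$ are fully convex the lower-level problem is convex, and LMFCQ \eqref{MFCQ-follower} at $\bar x$ yields a multiplier $\bar w\in\Lambda(\bar x,\bar z)$. By the definition \eqref{Lambda(x,y)}, the pair $(\bar z,\bar w)$ already satisfies \eqref{KS-2} and \eqref{KS-3}. The crucial point is that $\zeta^\ast:=\nabla_1\ell(\bar x,\bar z,\bar w)$ is a subgradient of $\varphi$ at $\bar x$. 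I would establish this by convex duality: full convexity makes $\varphi$ a finite convex function (an infimal projection of a jointly convex function), and the dual map $\theta(\cdot):=\min_z\ell(\cdot,z,\bar w)$ is likewise convex with $\theta\le\varphi$ everywhere (weak duality, using $\bar w\ge 0$) and $\theta(\bar x)=\varphi(\bar x)$ (since $\bar w^\top g(\bar x,\bar z)=0$ and $\bar z$ minimizes the convex map $\ell(\bar x,\cdot,\bar w)$). As $\bar z$ attains that minimum, $\nabla_1\ell(\bar x,\bar z,\bar w)\in\partial\theta(\bar x)$; and a convex minorant that touches $\varphi$ at $\bar x$ transmits its subgradients to $\varphi$, whence $\zeta^\ast\in\partial\varphi(\bar x)$.

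With $(z,w)=(\bar z,\bar w)$ fixed, I would close the argument at the upper level. Because $F+\lambda f$ is $C^1$ and $\varphi$ is convex, $\Theta$ is directionally differentiable with $\Theta'((\bar x,\bar y);d)=\nabla(F+\lambda f)(\bar x,\bar y)^\top d-\lambda\varphi'(\bar x;d_x)$, and local optimality over $C$ forces $\Theta'((\bar x,\bar y);d)\ge 0$ for every $d$ in the tangent cone of $C$; under UMFCQ \eqref{MFCQ-UL} this cone equals the linearized cone $L$ determined by the active gradients $\nabla G_i$ ($i\in I^1$) and $\nabla g_j$ ($j\in I^2$). The key sign observation is that $\zeta^\ast\in\partial\varphi(\bar x)$ gives $(\zeta^\ast)^\top d_x\le\varphi'(\bar x;d_x)$, so replacing $\varphi'(\bar x;d_x)$ by $(\zeta^\ast)^\top d_x$ can only increase $\Theta'$; hence the linear functional $c^\top d$, with $c:=\nabla(F+\lambda f)(\bar x,\bar y)-\lambda(\zeta^\ast,0)$, satisfies $c^\top d\ge\Theta'((\bar x,\bar y);d)\ge 0$ on $L$. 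Applying Farkas' lemma to the polyhedral cone $L$ then yields $u\ge 0$ and $v\ge 0$, supported on $I^1$ and $I^2$, with $c+\nabla G(\bar x,\bar y)u+\nabla g(\bar x,\bar y)v=0$; its $x$- and $y$-blocks are precisely \eqref{KS-1} and \eqref{VS-2}, while the complementary slackness carried by the supports gives \eqref{VS-3} and \eqref{VS-4}.

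The main obstacle is the penalty term $-\lambda\varphi$, which is \emph{concave}: one cannot simply append a subgradient of $\varphi$ with a fixed sign as in ordinary KKT calculus. The resolution is the sign-handling above—a subgradient underestimates the directional derivative, so subtracting $\lambda(\zeta^\ast)^\top d_x$ preserves nonnegativity on $L$—together with the convex-duality identification $\zeta^\ast=\nabla_1\ell(\bar x,\bar z,\bar w)\in\partial\varphi(\bar x)$, which is exactly where full convexity and LMFCQ are indispensable. Everything else (directional differentiability of $\Theta$, the tangent-cone/linearized-cone identification under UMFCQ, and the Farkas step) is routine.
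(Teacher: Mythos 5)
Your proof is correct, but it extracts the multipliers by a genuinely different route from the paper's. The paper starts identically (partial calmness plus Theorem \ref{equivalen} to get $\lambda$ and local optimality for \eqref{Penalized-LLVF}), but then invokes the abstract Lagrange multiplier rule for locally Lipschitz programs under UMFCQ to obtain $u,v$ together with the inclusion \eqref{KKTCondMan} involving an \emph{unknown} element of $\partial(-\varphi)(\bar x)$, and only afterwards identifies that subgradient as $\nabla_1\ell(\bar x,z,w)$ for \emph{some} $z\in S(\bar x)$, $w\in\Lambda(\bar x,z)$, by citing the Tanino--Ogawa sensitivity result (this is where full convexity and LMFCQ enter for them). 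You reverse the order of quantifiers: you fix an arbitrary $\bar z\in S(\bar x)$ and $\bar w\in\Lambda(\bar x,\bar z)$ up front, certify $\nabla_1\ell(\bar x,\bar z,\bar w)\in\partial\varphi(\bar x)$ by an elementary weak-duality minorant argument (in effect reproving the needed fragment of Tanino--Ogawa), and then produce $u,v$ by hand through directional derivatives, the tangent-cone/linearization-cone identification under UMFCQ, and Farkas' lemma, rather than quoting a nonsmooth KKT theorem. Your sign observation --- a subgradient underestimates $\varphi'(\bar x;\cdot)$ and enters with coefficient $-\lambda$, so substituting it can only enlarge the directional derivative --- is exactly what makes the fixed-subgradient order legitimate, and it yields slightly more than the paper's proof: the conclusion holds for \emph{every} choice of $z\in S(\bar x)$ and $w\in\Lambda(\bar x,z)$, and LMFCQ is used only to guarantee a multiplier at the single chosen $\bar z$. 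The trade-off is brevity versus self-containment: the paper's argument is shorter given the cited machinery, while yours is elementary throughout. One small point you glossed over: the step ``local optimality forces $\Theta'((\bar x,\bar y);d)\ge 0$ on the tangent cone'' (passing from approximating directions $d_k$ to $d$) uses local Lipschitz continuity of $\Theta$, which holds because the finite-valued convex $\varphi$ is locally Lipschitz --- the same fact the paper records explicitly at the start of its proof; also, your minorant $\theta(\cdot)=\inf_z\ell(\cdot,z,\bar w)$ need not attain its infimum away from $\bar x$, but the subgradient inequality for $\theta$ follows directly from joint convexity of $\ell(\cdot,\cdot,\bar w)$ and $\nabla_2\ell(\bar x,\bar z,\bar w)=0$, so no attainment is actually needed.
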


\begin{proof}Note that similar proof techniques for related results can be found in \cite{DempeDuttaMordukhovichNewNece, DempeZemkohoGenMFCQ, YeZhuOptCondForBilevel1995}.  Since the functions $f$, $g_1,\ldots,g_q$ involved in the lower-level problem are fully convex and sufficiently smooth, the optimal value function $\varphi$ \eqref{varphi} is convex \cite[Lemma 2.1]{TaninoOgawa1984} and, hence, locally Lipschitz continuous around $x$ ($|\varphi(x)|<\infty$ for any $x$ was assumed throughout). Therefore, for any $\lambda>0$, problem \eqref{Penalized-LLVF} is a Lipschitz continuous optimization problem.
Moreover, under the partial calmness condition, Theorem \ref{equivalen} guarantees that $\lambda>0$ existst such that $(\bar x,\bar y)$ is a local optimal solution of problem \eqref{Penalized-LLVF}. Now, applying the necessary optimality conditions for locally Lipschitz optimization problems to \eqref{Penalized-LLVF} and taking into account that UMFCQ \eqref{MFCQ-UL} holds at $(\bar x,\bar y)$, we obtain the existence of $u\in \mathbb{R}^p$ and $v\in \mathbb{R}^q$ such that \eqref{VS-3}, \eqref{VS-4}, and
\begin{equation}\label{KKTCondMan}
0\in \nabla F(\bar x,\bar y) + \nabla G(\bar x,\bar y)u + \nabla g(\bar x,\bar y)v + \lambda \nabla f(\bar x,\bar y) +
\lambda\left(\begin{array}{c}\partial (-\varphi)(\bar x)\\\{0\}\end{array}\right)
\end{equation}
hold for $(x,y)=(\bar x,\bar y)$. It is clear that \eqref{VS-2} for $(x,y)=(\bar x,\bar y)$  follows from the last $m$ components of this
inclusion.
Moreover, considering the first $n$ components of {\eqref{KKTCondMan}}, we get
\[
\nabla_1 F(\bar x,\bar y) + \nabla_1 G(\bar x,\bar y)u + \nabla_1 g(\bar x,\bar y)v + \lambda \nabla_1 f(\bar x,\bar y) \in \lambda\partial \varphi(\bar x)
\]
because of $\partial(-\varphi)(\bar x)= -\partial\varphi(\bar x)$.
Now, since the lower-level functions are fully convex and LMFCQ holds at $\bar x$, it follows from \cite[Theorem 2.1]{TaninoOgawa1984} that we can find some $z\in S(\bar x)$ and $w\in \Lambda(\bar x, z)$ such that \eqref{KS-1} holds for $(x,y)=(\bar x,\bar y)$. Furthermore, observe that the conditions \eqref{KS-2} and \eqref{KS-3} for $(x,y)=(\bar x,\bar y)$ result from the definition of
$w\in \Lambda(\bar x, z)$; cf. \eqref{Lambda(x,y)}.
\end{proof}
\begin{rem}
In the literature, for example, see \cite{DempeDuttaMordukhovichNewNece, DempeZemkohoGenMFCQ} and references therein,
an upper-level regularity condition is often used in combination with LMFCQ to derive \eqref{VS-3}, \eqref{VS-4}, and \eqref{KKTCondMan}.
Here, we employ UMFCQ instead as problem \eqref{P} involves  coupled upper-level constraints (i.e., depending on both the upper and lower-level variables), which is not the case in the aforementioned papers. Moreover, due to the convexity of the lower-level problem, the explicit use of inclusion $z\in S(\bar x)$ is avoided, given that its fulfillment follows from the conditions \eqref{KS-2} and \eqref{KS-3}.
\end{rem}

\begin{rem}\label{fallom}
If  $y=z$ in the optimality conditions in Theorem \ref{KN stationarity partial calm}, then we arrive at another well-known type of conditions consisting of \eqref{VS-2}--\eqref{KS-3} and
\begin{eqnarray}
 \nabla_1F(x,y) + \nabla_1 G(x,y)u  +  \nabla_1 g(x,y)\left(v - \lambda w\right)  +  \lambda \nabla_1 f(x,y) =0 \label{KS-1*}
\end{eqnarray}
with $z$  replaced by $y$ in \eqref{KS-2} and \eqref{KS-3}. Instead of the full convexity assumption, the inner-semicontinuity concept can also allow one to derive these conditions. To see this, note that if the LMFCQ and inner semicontinuity both hold at $(\bar x, z)$, then the Clarke subdifferential of $\varphi$ can be estimated as
\[
\partial \varphi(\bar x) \subseteq \left\{\nabla_1 \ell(\bar x, z, w)\left|\;\; w\in \Lambda(\bar x, z)\right.\right\};
\]
see \cite{DempeDuttaMordukhovichNewNece,DempeZemkohoGenMFCQ} for related details and references. Note that various other stationarity concepts for the bilevel programs based on the LLVF reformulation are possible; see the latter references for related details. However, it is important to note for any of these conditions, the convexity assumption will still be required for the lower-level problem for inclusion $y\in S(x)$ to be ignored in these conditions. 
\end{rem}
For conditions ensuring that  the assumptions in Theorem \ref{KN stationarity partial calm}, in particular partial calmness, we refer to \cite{DempeDuttaMordukhovichNewNece,DempeZemkohoGenMFCQ,DempeZemkohoBlpRefCqOptCond,YeZhuOptCondForBilevel1995, YeZhuZhuExactPenalization1997} and references therein.
Keeping the penalty parameter $\lambda>0$ fixed, the optimality conditions in Theorem \ref{KN stationarity partial calm} can be regarded as a mixed complementarity system and lead to an equivalent square system of nonsmooth equations which is dealt with a semismooth Newton method \cite{DeLuca1996,FischerASpecial1992,Kummer1988,QiJiangSemismooth1997,Qi1993convergence, QiSunANonsmoothVersion1993}. Dealing with other optimality conditions, like in \cite{DempeDuttaMordukhovichNewNece, DempeZemkohoGenMFCQ, YeZhuOptCondForBilevel1995}, in a similar way, one is led to more difficult
nonsmooth systems for which more sophisticated Newton-type methods \cite{FFH2013,FHI2016,FHI2018} might be helpful. 

\section{The algorithm}\label{The algorithm}

To apply the semismooth Newton method from \cite{DeLuca1996} to system
\eqref{KS-1}--\eqref{KS-3}, for some fixed $\lambda>0$, the latter system is reformulated by
means of the complementarity function
$\phi:\R^2\to\R$ \cite{FischerASpecial1992} given by
\[
 \phi(a,b):=\sqrt{a^2+b^2}-a-b.
\]
It can be easily checked that $\phi(a,b)=0$ if and only if $a\ge 0$, $b\ge 0$, $ab=0$ is valid.
Therefore, to rewrite the complementarity system \eqref{VS-3}--\eqref{KS-3}, we first define
functions $\phi^G:\R^n\times\R^m\times\R^p\to\R^{p}$ and $\phi^g:\R^n\times\R^m\times\R^q\to\R^q$ by
\[
 \phi^G(x,y,u):=\left(
 \begin{array}{c}
  \phi(-G_1(x,y),u_1)\\\vdots\\\phi(-G_p(x,y),u_p)
 \end{array}
 \right)\quad\mbox{and}\quad\phi^g(x,y,v):=\left(
 \begin{array}{c}
  \phi(-g_1(x,y),v_1)\\\vdots\\\phi(-g_q(x,y),v_q)
 \end{array}
 \right),
\]
respectively. Furthermore, we introduce the Lagrange-type functions $\mathcal L^\lambda :\R^n\times\R^m\times\R^p\times\R^q\to\R$ and $L^\lambda:\R^n\times\R^m\times\R^m\times\R^p\times\R^q\times\R^q\to\R$ be respectively defined by
\begin{eqnarray}
\mathcal{L}^{\lambda}(x,y,u,v) & := & F(x,y) + u^\top G(x,y) + v^\top g(x,y) + \lambda f(x,y), \label{upper-level lagrangian}\\
L^{\lambda}(x,y,z,u,v,w) & := & \mathcal{L}^\lambda(x,y,u,v) - \lambda\ell(x,z,w), \label{Upper Lagrangian}
\end{eqnarray}
where $\lambda>0$ is the fixed penalty parameter.
Based on these definitions, we now introduce the mapping $\Phi^\lambda:\R^N\to\R^N$ with $N:=n+2m+p+2q$ by
\begin{equation}\label{Phi-lambda}
\Phi^{\lambda}(\zeta):=\left[
\begin{array}{c}
  \nabla L^{\lambda}(\zeta)\\
 \phi^G(x,y,u)\\
 \phi^g(x,y,v)\\
 \phi^g(x,z,w)\\
\end{array}
\right]\quad\mbox{with}\quad\zeta:=(x, y, z, u, v, w),
\end{equation}
where $\nabla L^{\lambda}$ denotes the gradient of  $L^\lambda$ w.r.t. $(x,y,z)$.
Now, keeping \eqref{Upper Lagrangian} and \eqref{Phi-lambda} in mind, it can be easily seen that the optimality conditions \eqref{KS-1}--\eqref{KS-3} in Theorem \ref{KN stationarity partial calm} can equivalently be written as
\begin{equation}\label{Eq-Main}
    \Phi^{\lambda}(\zeta) = 0.
\end{equation}
Obviously, this is a square system of $N$ equations and $N$ variables. Moreover, the mapping $\Phi^\lambda$ is strongly semismooth at any solution of \eqref{Eq-Main}. In particular, we can apply the semismooth Newton method in \cite{DeLuca1996} with its favorable combination of global and local convergence properties. The latter means superlinear or quadratic convergence based on (strong) semismoothness of $\Phi^\lambda$ and a regularity property of the generalized Jacobians $\partial_B\Phi^\lambda$ or $\partial\Phi^\lambda$ at a solution $\zeta^*$ of \eqref{Eq-Main}. Sufficient conditions for $\partial\Phi^\lambda(\zeta^*)$
containing only nonsingular matrices will be developed in Section \ref{CD-regularity}. For global convergence, the complementarity function $\phi$ has the property that $\phi^2$ is differentiable with Lipschitz continuous derivative. Due to this, the merit function $\Psi^\lambda:\R^N\to\R$ with
\begin{equation}\label{Merit function}
    \Psi^{\lambda}(\zeta) := \frac{1}{2}\|\Phi^\lambda(\zeta)\|^2
\end{equation}
is continuously differentiable. In particular, this enables to overcome situations where
the Newton direction for \eqref{Eq-Main} does not exist or its descent is insufficient.
For other complementarity and merit functions as well as their properties we refer to \cite{FiJ2000, SunQi1999}, for example.

We now present the semismooth Newton algorithm from \cite{DeLuca1996} applied to equation
\eqref{Eq-Main} or, in other words, to the optimality conditions from Theorem \ref{KN stationarity partial calm}.

Some remarks on Algorithm \ref{algorithm 1} are in order. Compared to \cite{DeLuca1996}, we are
now dealing with a complementarity system instead of a pure complementarity problem. The penalization parameter $\lambda>0$ has to be chosen in Step 0 and is fixed throughout the algorithm. For the definition of
the generalized Jacobian $\partial_B\Phi^\lambda(\zeta)$, see Section \ref{prelim}. If all matrices in $\partial_B\Phi^\lambda(\zeta)$ are nonsingular, the function $\Phi^\lambda$ is called BD-regular at $\zeta$.
Following \cite{DeLuca1996}, global and local convergence properties of Algorithm \ref{algorithm 1} can be derived as follows.
\begin{algorithm}[H]
\caption{Semismooth Newton algorithm}
\label{algorithm 1}
\begin{algorithmic}
 \STATE\textbf{Step 0}: Choose $\lambda>0$, $\beta>0$, $\epsilon\ge 0$, $t>2$, $\rho\in(0,1)$, $\sigma\in \left(0, \, \frac{1}{2}\right)$, $\zeta^o:=(x^o, y^o, z^o,  u^o, v^o, w^o)$.\\
 \hspace*{3.45em}Set $k:=0$.
 \STATE\textbf{Step 1}: If $\|\Phi^{\lambda}(\zeta^k)\|\le\epsilon$, then stop.
 \STATE \textbf{Step 2}: Choose $W^k\in \partial_B \Phi^{\lambda}(\zeta^k)$ and compute a solution $d^k$ of the system
 \[
  W^k d =-\Phi^{\lambda}(\zeta^k).
 \]
 \hspace*{3.45em}If this equation has no solution or if the condition
 \[
 \nabla \Psi^{\lambda}(\zeta^k)^\top d^k \leq -\beta \|d^k\|^t
 \]
 \hspace*{3,45em}is violated, then set
 \[
 d^k:= -\nabla \Psi^{\lambda}(\zeta^k).
 \]
\STATE \textbf{Step 3}: Find the smallest integer $s_k\in\{0,1,2,3,\ldots\}$ such that
\[
 \Psi^{\lambda}(\zeta^k + \rho^{s_k}d^k)\le\Psi^{\lambda}(\zeta^k)+\sigma \rho^{s_k}\nabla\Psi^\lambda(\zeta^k)^\top d^k.
\]
\STATE \textbf{Step 4}: Set $\alpha_k:= \rho^{s_k}$, $\zeta^{k+1}:=\zeta^k + \alpha_k d^k$, $k:=k+1$, and go to Step 1.
\end{algorithmic}
\end{algorithm}
%
\begin{thm}\label{convergence result}
Assume that $\bar \zeta$ is an accumulation point of a sequence $\{\zeta^k\}$ generated by Algorithm \ref{algorithm 1} for some $\lambda > 0$. Then, $\bar \zeta$ is a stationary point of $\Psi^\lambda$, i.e., $\nabla \Psi^\lambda(\bar \zeta)=0$. If $\bar \zeta$ solves $\Phi^\lambda (\zeta)=0$ and $\Phi^\lambda$ is BD-regular at $\bar \zeta$, then $\{\zeta^k\}$ converges to  $\bar \zeta$ superlinearly and, if the functions $F,G,f$, and $g$ defining problem \eqref{P} have locally Lipschitz continuous second order derivates, the convergence is Q-quadratic.
\end{thm}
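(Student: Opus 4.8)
The plan is to verify that Algorithm \ref{algorithm 1} fits exactly into the framework of \cite{DeLuca1996} and then to invoke their global and local convergence theorems, after checking that the structural hypotheses hold in our setting. The essential facts are already available: since $\phi^2$ is continuously differentiable with locally Lipschitz gradient, and since the block $\nabla L^\lambda$ is $C^1$ (because $F,G,f,g$ are $C^2$), the merit function $\Psi^\lambda$ is continuously differentiable; moreover $\Phi^\lambda$ is semismooth, the complementarity blocks $\phi^G,\phi^g$ being strongly semismooth and $\nabla L^\lambda$ being at least semismooth. One point I would address explicitly is that, unlike in \cite{DeLuca1996}, our $\Phi^\lambda$ is not a pure complementarity reformulation but carries the smooth equation block $\nabla L^\lambda=0$; since smooth mappings are trivially semismooth and contribute a $C^1$ summand to $\Psi^\lambda$, this block structure does not disrupt the framework.

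For the first assertion I would proceed as follows. At each iteration the search direction $d^k$ is a descent direction for $\Psi^\lambda$: if the Newton direction is accepted, the test $\nabla\Psi^\lambda(\zeta^k)^\top d^k\le-\beta\|d^k\|^t$ forces descent; otherwise $d^k=-\nabla\Psi^\lambda(\zeta^k)$ gives $\nabla\Psi^\lambda(\zeta^k)^\top d^k=-\|\nabla\Psi^\lambda(\zeta^k)\|^2\le 0$. Consequently the Armijo rule in Step 3 terminates with a finite $s_k$, the sequence $\{\Psi^\lambda(\zeta^k)\}$ is nonincreasing and bounded below by zero, hence convergent. Applying the line-search convergence theory of \cite{DeLuca1996}, which uses only the continuous differentiability of $\Psi^\lambda$ and the safeguarded direction rule, every accumulation point $\bar\zeta$ satisfies $\nabla\Psi^\lambda(\bar\zeta)=0$.

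For the local statements, suppose $\bar\zeta$ solves $\Phi^\lambda(\zeta)=0$ and $\Phi^\lambda$ is BD-regular at $\bar\zeta$. Since $\partial_B\Phi^\lambda$ is upper semicontinuous and all its elements at $\bar\zeta$ are nonsingular, there is a neighborhood of $\bar\zeta$ on which every $W\in\partial_B\Phi^\lambda(\zeta)$ is nonsingular with uniformly bounded inverse. On this neighborhood the Newton system in Step 2 is uniquely solvable with $\|d^k\|=O(\|\zeta^k-\bar\zeta\|)$, the descent test is satisfied so that the Newton direction is used, and, using semismoothness of $\Phi^\lambda$ together with $\Psi^\lambda(\bar\zeta)=0$, the unit step $\alpha_k=1$ eventually fulfills the Armijo condition. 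Semismoothness then yields $\|\zeta^{k+1}-\bar\zeta\|=\|\zeta^k+d^k-\bar\zeta\|=o(\|\zeta^k-\bar\zeta\|)$, i.e., superlinear convergence; once an iterate enters the neighborhood, this estimate forces the whole sequence to converge to $\bar\zeta$. If in addition $F,G,f,g$ are LC$^2$, then $\nabla L^\lambda$ is strongly semismooth, hence so is $\Phi^\lambda$, and the remainder improves to $O(\|\zeta^k-\bar\zeta\|^2)$, giving Q-quadratic convergence.

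The step I expect to be the main obstacle is the acceptance of the unit step near $\bar\zeta$: one must show that the Armijo inequality in Step 3 holds with $s_k=0$ for $\zeta^k$ close to $\bar\zeta$. This is the standard but delicate transition-to-fast-local-convergence argument, and it relies on relating $\nabla\Psi^\lambda(\zeta^k)^\top d^k$ to $-\|\Phi^\lambda(\zeta^k)\|^2$ through the Newton equation and the semismooth Taylor-type estimate, with the precise constants and the role of $\sigma<\tfrac12$ tracked exactly as in \cite{DeLuca1996}.
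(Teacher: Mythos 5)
Your proposal is correct and follows essentially the same route as the paper, which simply observes that the proof of \cite[Theorem 11]{DeLuca1996} extends to the complementarity system $\Phi^\lambda(\zeta)=0$; you spell out the verification of the framework (descent property, Armijo termination, BD-regularity giving uniformly bounded inverses, unit-step acceptance, strong semismoothness under the LC$^2$ assumption) in more detail than the paper does. Your flagged concern about unit-step acceptance is exactly the point the paper addresses in its remark that continuity of the second-order derivatives of $F,G,f,g$ suffices in place of the semismoothness of these derivatives used in \cite{DeLuca1996}, which matches your reliance on semismoothness of $\Phi^\lambda$ itself.
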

The proof of \cite[Theorem 11]{DeLuca1996} can be easily extended to the complementarity system
$\Phi^\lambda(\zeta)=0$. Moreover, it is known that the continuity of the second-order derivatives of the problem functions $F,G,f$, and $g$ (as assumed in Section \ref{Introduction}) suffices instead of the semismoothness of these derivatives (as used in \cite{DeLuca1996} for showing that eventually the unit stepsize $\alpha_k=1$ is accepted).

The assumption that $\Phi^\lambda$ is BD-regular at $\bar\zeta$ used in Theorem \ref{convergence result} can be replaced by the stronger CD-regularity of $\Phi^\lambda$ at $\bar\zeta$. The latter requires the non-singularity of all matrices in  $\partial \Phi^{\lambda}(\bar \zeta)$. In the next section, we focus on the derivation of sufficient conditions for CD-regularity. The latter also allows a nice connection to the Robinson condition that we are going to discuss in Section \ref{Robinson-type condition}.
Let us finally note that conditions ensuring that a stationary point of a merit function is a solution of the underlying equation were extensively studied for complementarity problems. We do not want to dive into this subject but note that if just one element of $\partial\Phi^\lambda(\bar\zeta)$ is nonsingular then
$\nabla\Psi^\lambda(\bar\zeta)=0$ implies $\Phi^\lambda(\bar\zeta)=0$, for example see \cite[Section 4]{FFK1998}.

\section{CD-regularity}\label{CD-regularity}

To provide sufficient conditions guarantying that CD-regularity holds for $\Phi^\lambda$ \eqref{Phi-lambda}, we first provide an upper estimate of the generalized Jacobian of $\Phi^\lambda$ in the sense of Clarke \eqref{Clarke Subdifferential}. Recall that the index sets $\eta^i$, $\nu^i$ and $\theta^i$ with $i=1, 2, 3$ are defined in \eqref{multiplier sets} and \eqref{nu2nu3}.
\begin{thm}\label{Jacobian Phi}Let the functions $F$, $G$, $f$, and $g$ be twice continuously differentiable at $\zeta:=(x,y,z,u,v,w)$.  If $\lambda>0$, then $\Phi^\lambda$ is semismooth at $\zeta$ and any matrix $W^\lambda\in \partial \Phi^\lambda (\zeta)$ can take the form
\begin{equation}\label{Wlambda}
W^\lambda =\left[
\begin{array}{cccccc}
A^{\lambda}_{11} & \left(A^{\lambda}_{21}\right)^{\top} & -\lambda A^\top_{31}   & B^\top_{11} & B^\top_{21} & -\lambda B^\top_{31} \\
A^{\lambda}_{21} & A^{\lambda}_{22} & O   & B^\top_{12} & B^\top_{22} & O \\
-\lambda A_{31} & O & -\lambda A_{33}   & O & O & -\lambda B^\top_{33} \\
\Lambda_1B_{11} & \Lambda_1B_{12} & O   & \Gamma_1 & O & O \\
\Lambda_2 B_{21} & \Lambda_2B_{22} & O   & O & \Gamma_2 & O \\
\Lambda_3B_{31} & O & \Lambda_3B_{33}   & O & O & \Gamma_3
\end{array}
\right]
\end{equation}
where the matrices $A_{ij}$ and $B_{ij}$ are respectively defined by
\begin{equation}\label{Blambda}
\begin{array}{l}
\begin{array}{l}
   A^{\lambda}_{11}:= \nabla^2_{1}\mathcal{L}^{\lambda}(\zeta)-\lambda \nabla^2_{1}\ell(\zeta), \;\;\, A^{\lambda}_{21}:= \nabla^2_{12}\mathcal{L}^{\lambda}(\zeta), \;\;\, A^{\lambda}_{22}:=\nabla^2_{2}\mathcal{L}^{\lambda}(\zeta), \\
   A_{31} :=  \nabla^2_{12}\ell(\zeta), \;\;\, A_{33}:= \nabla^2_{2}\ell(\zeta),
\end{array}\\
    \begin{array}{lll}
  B_{11} := \nabla_1 G(x,y), & B_{21} :=\nabla_1 g(x,y), & B_{31} :=\nabla_1 g(x,z),\\
   B_{12} :=\nabla_2 G(x,y), & B_{22} :=\nabla_2 g(x,y), & B_{33} :=\nabla_2 g(x,z),
\end{array}
\end{array}
\end{equation}
while  $\Lambda_i :=\mbox{diag} (a^i) $ and $\Gamma_i :=\mbox{diag}(b^i)$, $i=1, 2, 3$, are such that
 \begin{equation}\label{ab definition}
    (a^i_j,b^i_j)\left\{\begin{array}{ll}
                  =(0,-1) & \mbox{ if } \;j\in  \eta^i, \\
                  =(1,0) & \mbox{ if } \;j\in \nu^i, \\
                  \in \{(\alpha, \beta): \; (\alpha-1)^2 + (\beta+1)^2\leq 1\} & \mbox{ if }\; j\in \theta^i.
                \end{array}
\right.
 \end{equation}
\end{thm}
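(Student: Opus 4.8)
The statement has two parts: semismoothness of $\Phi^\lambda$, and the explicit block form of a generic $W^\lambda \in \partial\Phi^\lambda(\zeta)$. The plan is to treat each component block of $\Phi^\lambda$ separately, since $\partial\Phi^\lambda$ respects the block structure. First I would address semismoothness. The top block $\nabla L^\lambda$ is a gradient of a twice continuously differentiable function in $(x,y,z)$; since the $F,G,f,g$ are $C^2$, $\nabla L^\lambda$ is $C^1$, hence (strongly) semismooth. The remaining blocks are the compositions $\phi^G(x,y,u)$, $\phi^g(x,y,v)$, $\phi^g(x,z,w)$. Here $\phi(a,b)=\sqrt{a^2+b^2}-a-b$ is the Fischer–Burmeister function, which is well known to be strongly semismooth (indeed its only nonsmoothness is at the origin), and it is composed with $C^2$ maps. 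Since semismoothness is preserved under composition with smooth maps and under stacking into a vector, $\Phi^\lambda$ is semismooth at every $\zeta$.

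\textbf{Computing the generalized Jacobian.} The main work is the block form. I would first differentiate the smooth top block $\nabla L^\lambda(\zeta)$ (gradient w.r.t.\ $(x,y,z)$) to obtain its Jacobian, which is the Hessian $\nabla^2 L^\lambda$. Reading off \eqref{Upper Lagrangian} and \eqref{upper-level lagrangian}, the $(x,y)$-part involves $\mathcal{L}^\lambda$ and the $z$-part involves $-\lambda\ell$; the second-order partials give exactly the matrices $A^\lambda_{11}$, $A^\lambda_{21}$, $A^\lambda_{22}$, $A_{31}$, $A_{33}$ defined in \eqref{Blambda}, together with the multiplier-dependent cross terms $B^\top_{ij}$ coming from differentiating the terms $u^\top G + v^\top g - \lambda w^\top g$ with respect to the multipliers $(u,v,w)$, which yields the Jacobians $\nabla G$, $\nabla g$ collected in the $B$-matrices. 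This accounts for the first $n+2m$ rows.

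\textbf{The complementarity blocks.} For the last $p+2q$ rows I would apply the chain rule for Clarke's generalized Jacobian to each $\phi$-component. For a single coordinate $\phi(-G_i(x,y),u_i)$, away from the kink the ordinary partials of $\phi$ in its two arguments multiply the gradients of $-G_i$ and the coordinate $u_i$. Writing the partial derivatives of $\phi$ at a point $(a,b)\neq 0$ as $(\partial_a\phi,\partial_b\phi)=\bigl(\tfrac{a}{\sqrt{a^2+b^2}}-1,\ \tfrac{b}{\sqrt{a^2+b^2}}-1\bigr)$, I would verify that on the three index regimes these reduce to the pairs in \eqref{ab definition}: for $j\in\eta^i$ (where the constraint is inactive and the multiplier is zero) one gets $(0,-1)$, for $j\in\nu^i$ (active constraint, positive multiplier) one gets $(1,0)$, and for $j\in\theta^i$ (the degenerate case where both arguments vanish) the generalized gradient of $\phi$ at the origin is the disc $\{(\alpha-1)^2+(\beta+1)^2\le 1\}$, giving the set-valued entry. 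Packaging $a^i_j$ into $\Lambda_i$ and $b^i_j$ into $\Gamma_i$, and noting $a^i_j$ multiplies the constraint-Jacobian blocks ($\Lambda_i B$) while $b^i_j$ multiplies the identity in the multiplier coordinate ($\Gamma_i$), reproduces the bottom three block-rows of \eqref{Wlambda}.

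\textbf{Main obstacle.} The one genuinely delicate point is the use of Clarke's chain rule at points in the $\theta^i$ regime, where $\phi$ is nonsmooth. The generalized Jacobian of a composition is in general only \emph{contained in} the product of the generalized Jacobians (Clarke's inclusion $\partial(\phi\circ H)\subseteq \overline{\mathrm{co}}\,\partial\phi(H(\cdot))\,\nabla H$), so strictly one shows that \emph{every} $W^\lambda\in\partial\Phi^\lambda(\zeta)$ \emph{can be written} in the displayed form — which is exactly what the theorem claims (``can take the form''), rather than an equality of sets. I would therefore phrase the argument as an upper estimate: invoke the chain rule to embed $\partial\Phi^\lambda(\zeta)$ into the set of matrices of the form \eqref{Wlambda}, using that the smooth blocks contribute their exact derivatives and that the nonsmooth $\phi$-entries contribute elements of $\partial\phi$ at the relevant argument, whose three cases are catalogued by \eqref{ab definition}. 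The separability of the constraint functions (each $\phi$-coordinate depends on a single $G_i$ or $g_j$ and a single multiplier) is what keeps the Jacobian of $\phi^G,\phi^g$ block-diagonal in the multiplier coordinates, avoiding cross terms and making the chain rule tractable.
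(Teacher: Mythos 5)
Your proposal is correct and is exactly the argument the theorem rests on: the paper states this result without proof, as an upper estimate of $\partial\Phi^\lambda(\zeta)$ obtained, just as you do, by differentiating the smooth block $\nabla L^\lambda$ exactly (yielding the $A$- and $B$-blocks and the zero blocks from the separable dependence on $(y,u,v)$ versus $(z,w)$) and applying the Clarke chain rule and the componentwise product estimate to the Fischer--Burmeister compositions, with the three regimes of \eqref{ab definition} read off from $\partial\phi$. One bookkeeping slip worth fixing: $\partial\phi(0,0)$ is the disc $\{(\alpha,\beta):\,(\alpha+1)^2+(\beta+1)^2\le 1\}$ centered at $(-1,-1)$, and the disc centered at $(1,-1)$ in \eqref{ab definition} arises only after composing with the inner maps $-G_i$, $-g_j$, whose minus sign reflects the first component (equivalently $a^i_j=-\partial_a\phi$, $b^i_j=\partial_b\phi$) --- your packaging of $a^i_j$ into $\Lambda_i B$ handles this implicitly, but the sentence attributing that disc directly to the generalized gradient of $\phi$ at the origin is literally off by this reflection.
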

In the next result, we provide conditions ensuring that the function $\Phi^{\lambda}$ is CD-regular. To proceed, first note that, similarly to the UMFCQ \eqref{MFCQ-UL} and analogously to the LLICQ \eqref{LICQ}, the  {upper-level linear independence constraint qualification} (ULICQ) will be said to hold at  $(\bar x, \bar y)$ if the following family of vectors is linear independent:
\begin{equation}\label{LICQ-leader}
\left\{\nabla G_i(\bar x,\bar y):\; i\in I^1\right\} \cup  \left\{\nabla g_j(\bar x,\bar y):\; j\in I^2\right\}.
\end{equation}
Furthermore, let us introduce the cone of feasible directions for problem \eqref{Penalized-LLVF}
\begin{equation}\label{Ponana}
\begin{array}{llll}
Q(\bar x, \bar y, \bar z)  & := & \Big\{d\;\,\Big|&\nabla G_i(\bar x, \bar y)^\top d^{1,2}=0, \; i\in \nu^1,\\
          &    &                 &\nabla g_j(\bar x, \bar y)^\top d^{1,2}=0, \; j\in \nu^2,\;\;\;\nabla g_j(\bar x, \bar z)^\top d^{1,3}=0, \; j\in \nu^3\Big\},
\end{array}
\end{equation}
where $d:=\left(d^1_1, \ldots, d^1_n, \; d^2_1, \ldots, d^2_m,\; d^3_1, \ldots, d^3_m \right)^\top$, $d^{1, 2}:=\left(d^1_1, \ldots, d^1_n,\; d^2_1, \ldots, d^2_m\right)^\top$, and $d^{1, 3}$ defined similarly. Recall that for  $i=1, 2, 3$, $\nu^i$ is defined as in \eqref{multiplier sets}--\eqref{nu2nu3}.
By $\nabla^2\mathcal{L}^{\lambda}(\bar \zeta)$ and $\nabla^2\ell(\bar \zeta)$, we will denote the Hessian of the Lagrangian functions  $\mathcal{L}^{\lambda}$ and $\ell$ w.r.t. $(x,y)$ and $(x,z)$, respectively.

\begin{thm}\label{SOSSC-Theorem 1-1}{
Let $\bar \zeta:=(\bar x, \bar y, \bar z, \bar u, \bar v, \bar w)$ satisfy the conditions \eqref{KS-1}--\eqref{KS-3} for some $\lambda >0$. Suppose that  ULICQ \eqref{LICQ-leader} and LLICQ \eqref{LICQ} hold at $(\bar x, \bar y)$ and $(\bar x, \bar z)$, respectively. If  additionally, 
  \begin{equation}\label{SOSSC}
  \begin{array}{l}
 (d^{1,2})^{\top} \nabla^2\mathcal{L}^{\lambda}(\bar \zeta) d^{1,2} > \lambda (d^{1,3})^{\top} \nabla^2\ell(\bar \zeta) d^{1,3}
  \end{array}
  \end{equation}
for all $d\in Q(\bar x, \bar y, \bar z)\setminus \{0\}$ and LSCC \eqref{LSCC} is also satisfied at $(\bar x, \bar z, \bar w)$, then  $\Phi^{\lambda}$ is CD-regular at $\bar \zeta$.
}
\end{thm}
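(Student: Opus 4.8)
The plan is to verify CD-regularity straight from its definition: we must show that every $W^{\lambda}\in\partial\Phi^{\lambda}(\bar\zeta)$ having the form \eqref{Wlambda} is nonsingular, i.e.\ that $W^{\lambda}d=0$ forces $d=0$. Write the kernel vector as $d=(d^1,d^2,d^3,d^4,d^5,d^6)$ with $d^1\in\R^n$, $d^2,d^3\in\R^m$, $d^4\in\R^p$, $d^5,d^6\in\R^q$, so that $d^{1,2}=(d^1,d^2)$ and $d^{1,3}=(d^1,d^3)$ are exactly the primal blocks appearing in $Q(\bar x,\bar y,\bar z)$. First I would read off the last three (complementarity) block-rows of $W^{\lambda}d=0$. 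Using \eqref{ab definition}, a row with index $j\in\eta^i$ reduces to the vanishing of the corresponding multiplier direction, while a row with $j\in\nu^i$ reduces to $\nabla G_j^{\top}d^{1,2}=0$ (resp.\ $\nabla g_j^{\top}d^{1,2}=0$ and $\nabla g_j^{\top}d^{1,3}=0$). Collecting the $\nu^1,\nu^2,\nu^3$ equations shows precisely that $(d^1,d^2,d^3)\in Q(\bar x,\bar y,\bar z)$.

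The heart of the argument is to form the right scalar combination of the first three block-rows. Pairing the first, second, and third block equations of $W^{\lambda}d=0$ with $d^1,d^2$, and $d^3$ respectively and adding, the primal--primal contribution collapses, thanks to \eqref{Blambda} and $L^{\lambda}=\mathcal L^{\lambda}-\lambda\ell$, into the SOSC gap
\[
(d^{1,2})^{\top}\nabla^2\mathcal{L}^{\lambda}(\bar\zeta)\,d^{1,2}-\lambda\,(d^{1,3})^{\top}\nabla^2\ell(\bar\zeta)\,d^{1,3}.
\]
The remaining terms couple primal and multiplier directions, namely $T_1:=(d^4)^{\top}(B_{11}d^1+B_{12}d^2)$, $T_2:=(d^5)^{\top}(B_{21}d^1+B_{22}d^2)$, and $-\lambda T_3$ with $T_3:=(d^6)^{\top}(B_{31}d^1+B_{33}d^3)$. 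To sign these I would return to the complementarity rows: on $\eta^i$ the multiplier direction vanishes and on $\nu^i$ the paired primal quantity vanishes, so only the biactive indices $\theta^i$ contribute. For $j\in\theta^i$ the disk condition in \eqref{ab definition} gives $a^i_j\ge 0$, $b^i_j\le 0$ with $(a^i_j,b^i_j)\neq 0$, and the $j$-th equation $a^i_j(\cdot)+b^i_j(\cdot)=0$ then forces every product in $T_1$ and $T_2$ to be nonnegative (when $b^i_j=0$ the disk forces $a^i_j=1$, hence the paired primal quantity is zero). Thus $T_1\ge 0$ and $T_2\ge 0$. Crucially, LSCC \eqref{LSCC} makes $\theta^3=\emptyset$, so $T_3=0$ identically. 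The identity therefore reads (SOSC gap)$+T_1+T_2=0$ with $T_1,T_2\ge 0$, so the SOSC gap is $\le 0$; combined with $(d^1,d^2,d^3)\in Q$ and the strict inequality \eqref{SOSSC}, this forces $(d^1,d^2,d^3)=0$.

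It then remains to propagate this to the multiplier directions. With $d^1=d^2=d^3=0$, the third block-row gives $\nabla_2 g(\bar x,\bar z)^{\top}d^6=0$, while the sixth block-row (using $\theta^3=\emptyset$) gives $(d^6)_j=0$ for $j\in\eta^3$; LLICQ \eqref{LICQ} at $(\bar x,\bar z)$, which makes $\{\nabla_2 g_j(\bar x,\bar z)\mid j\in I^3\}$ linearly independent, then yields $d^6=0$. Substituting $d^6=0$, the first and second block-rows combine into $\sum_j (d^4)_j\nabla G_j(\bar x,\bar y)+\sum_j (d^5)_j\nabla g_j(\bar x,\bar y)=0$; the fourth and fifth block-rows force $(d^4)_j=0$ for $j\in\eta^1$ and $(d^5)_j=0$ for $j\in\eta^2$, restricting the sum to the active indices $I^1$ and $I^2$, and ULICQ \eqref{LICQ-leader} forces the remaining coefficients to vanish, giving $d^4=d^5=0$. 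Hence $d=0$ and $W^{\lambda}$ is nonsingular.

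The step I expect to be the main obstacle is the sign bookkeeping on the biactive indices together with the correct algebraic reduction of the primal--primal block to the SOSC gap: one must track the $-\lambda$ factors and the transposes in \eqref{Wlambda}--\eqref{Blambda} carefully, and---most delicately---notice that the $z$-block enters the identity with the opposite sign, so that $T_3$ cannot be controlled by the disk inequality alone. This is the structural reason LSCC (hence $\theta^3=\emptyset$), rather than a one-sided condition, is imposed. Once the primal directions are known to vanish, the back-substitution via LLICQ and ULICQ is routine.
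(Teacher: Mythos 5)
Your proposal is correct and follows essentially the same route as the paper's proof: write out $W^{\lambda}d=0$ blockwise, pair the first three block-rows with $(d^1,d^2,d^3)$ to produce the identity (SOSC gap) plus signed cross terms, use LSCC ($\theta^3=\emptyset$) to annihilate the $z$-block cross term, invoke \eqref{SOSSC} on $Q(\bar x,\bar y,\bar z)$ to kill the primal directions, and back-substitute via LLICQ and then ULICQ. The only cosmetic difference is bookkeeping: the paper partitions indices into explicit sets $P^i_1,P^i_2,P^i_3$ according to the values of $(a^i_j,b^i_j)$ and derives $d^4_j=0$ on $P^1_1$, $d^5_j=0$ on $P^2_1$ from the quadratic identity before applying ULICQ, whereas you sign the cross terms directly from the disk condition and let ULICQ eliminate all remaining active-index coefficients at once --- both are valid.
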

\begin{proof}
Let $W^\lambda$ be any element from $\partial \Phi^{\lambda}(\bar \zeta)$. Then, it can take the form described in Theorem \ref{Jacobian Phi}, cf. \eqref{Wlambda}--\eqref{ab definition}. Hence, it follows that for any $d:=(d^1, d^2, d^3, d^4, d^5, d^6)$ with $d^1\in \mathbb{R}^n$, $d^2\in \mathbb{R}^m$, $d^3\in \mathbb{R}^m$, $d^4\in \mathbb{R}^p$, $d^5\in \mathbb{R}^q$ and $d^6\in \mathbb{R}^q$ such that $W^\lambda d =0$, we have
\begin{eqnarray}
  \nabla^2_{1} \mathcal{L}^\lambda (\bar\zeta)d^1 - \lambda \nabla^2_{1} \ell (\bar\zeta)d^1 +\nabla^2_{21}\mathcal{L}^\lambda (\bar\zeta) d^2 -\lambda \nabla^2_{21}\ell(\bar\zeta) d^3 \qquad \nonumber\, \,\\
  + \;\nabla_1 G(\bar x, \bar y)^\top d^4 + \nabla_1 g(\bar x,\bar y)^\top d^5 -\lambda \nabla_1 g(\bar x, \bar z)^\top d^6=0,\label{p11}\\
  \nabla^2_{12} \mathcal{L}^\lambda (\bar \zeta)d^1 +\nabla^2_{2}\mathcal{L}^\lambda (\bar \zeta) d^2 + \nabla_2 G(\bar x, \bar y)^\top d^4 + \nabla_2 g(\bar x, \bar y)^\top d^5=0,\label{p12}\\
  -\;\lambda\nabla^2_{12}\ell (\bar \zeta)d^1 - \lambda \nabla^2_{2} \ell (\bar \zeta)d^3 -\lambda \nabla_2 g(\bar x, \bar z)^\top d^6=0,\label{p13}\\
  \forall j=1, \ldots, p, \; a^1_j \nabla G_j(\bar x, \bar y)^\top d^{1,2} + b^1_j d^4_j =0,\label{p14}\\
    \forall j=1, \ldots, q, \; a^2_j \nabla g_j(\bar x, \bar y)^\top d^{1,2} + b^2_j d^5_j =0,\label{p15}\\
     \forall j=1, \ldots, q, \; a^3_j \nabla g_j(\bar x, \bar z)^\top d^{1,3} + b^3_j d^6_j =0.\label{p16}
\end{eqnarray}
Recall that $p$ and $q$ represent the number of components of upper- (resp. lower-) constraint functions \eqref{P}--\eqref{lower-level problem}. For $i=1, 2, 3$, let $p_1:=p$, $p_2:=q$ (when $g$ applied to $(\bar x, \bar y)$), and $p_3:=q$ (when $g$ applied to $(\bar x, \bar z)$). Then define $P^i_1$ as the set of indices $j=1 \ldots, p_i$ such that  $a^i_j>0$ and $b^i_j<0$; $P^i_2$ as the set of indices $j=1 \ldots, p_i$ such that  $a^i_j=0$ and $b^i_j=-1$; and $P^i_3$ as the set of indices $j=1 \ldots, p_i$ such that  $a^i_j=1$ and $b^i_j=0$.
It follows from \eqref{p14}--\eqref{p16} that for $j\in P^1_2$, $j\in P^2_2$, and $j\in P^3_2$,
\begin{equation}\label{yes1}
    d^4_j=0, \; d^5_j=0,\, \mbox{ and }d^6_j=0,
\end{equation}
respectively. As for $j\in P^1_3$, $j\in P^2_3$, and $j\in P^3_3$, we respectively get
\begin{equation}\label{yes2}
    \nabla G_j (\bar x,\bar y)^\top d^{1,2}=0,\; \nabla g_j (\bar x,\bar y)^\top d^{1,2}=0, \, \mbox{ and }\, \nabla g_j (\bar x,\bar z)^\top d^{1,3}=0.
\end{equation}
Now observe that under the LSCC \eqref{LSCC},  $\theta^3=\emptyset$. Hence, from the corresponding counterpart of \eqref{ab definition}, it follows that $P^3_1:=\emptyset$. We can further check that  for $j\in P^1_1$ and $j\in P^2_1$ we respectively have
\begin{equation}\label{yes3}
    \nabla G_j (\bar x,\bar y)^\top d^{1,2}=c^1_j d^4_j \,\mbox{ and }\,  \nabla g_j (\bar x,\bar y)^\top d^{1,2}=c^2_j d^5_j,
\end{equation}
where $c^1_j:=-\frac{b^1_j}{a^1_j}$ and $c^2_j:=-\frac{b^2_j}{a^2_j}$, respectively.
By respectively multiplying \eqref{p11}, \eqref{p12}, and \eqref{p13} from the left by $(d^1)^\top$, $(d^2)^\top$, and $(d^3)^\top$, and adding the resulting sums together,
\begin{equation}\label{Quad-term}
    \begin{array}{l}
 (d^{1,2})^\top \nabla^2 \mathcal{L}^\lambda (\bar\zeta) d^{1,2} - \lambda (d^{1,3})^\top \nabla^2 \ell (\bar\zeta) d^{1,3} \\
\qquad \qquad \qquad +  \;\, (d^4)^\top\nabla G(\bar x,\bar y) d^{1,2}
  +
  (d^5)^\top\nabla g(\bar x,\bar y) d^{1,2}
  -\lambda
  (d^6)^\top\nabla g(\bar x,\bar z) d^{1,3}=0.
\end{array}
\end{equation}
Considering the strict complementarity slackness at $(\bar x, \bar z, \bar w)$ again, it follows that
\begin{equation}\label{SCS-applied}
 (d^6)^\top\nabla g(\bar x,\bar z) d^{1,3} =  \sum_{j\in P^3_{2}}d^6_j\nabla g_j(\bar x,\bar z)^{\top} d^{1,3} + \sum_{j\in P^3_{3}}d^6_j\nabla g_j(\bar x,\bar z)^{\top} d^{1,3}=0
\end{equation}
 given that $d^6_j=0$ for $j\in P^3_{2}$ and $\nabla g_j(\bar x,\bar z)^{\top} d^{1,3}=0$ for $j\in P^3_{3}$.
 Inserting \eqref{SCS-applied} into \eqref{Quad-term} while taking into account \eqref{yes1}--\eqref{yes3}, we get
 \begin{equation*}\label{SOSSC-needed}
(d^{1,2})^\top \nabla^2 \mathcal{L}^\lambda (\bar \zeta) d^{1,2} - \lambda (d^{1,3})^\top \nabla^2 \ell (\bar\zeta) d^{1,3}
  + \sum_{j\in P^1_1}c^1_j (d^4_j)^2 + \sum_{j\in P^2_1}c^2_j (d^5_j)^2 =0.
 \end{equation*}
Since by definition, $c^1_j >0$ for $j\in P^1_1$ and $c^2_j >0$ for $j\in P^2_1$, it follows from condition \eqref{SOSSC} that $d^{1}=0$, $d^{2}=0$, $d^{3}=0$, $d^4_j=0$ for $j\in P^1_1$ and $d^5_j=0$ for $j\in P^2_1$, while taking into account \eqref{yes2} and the fact that $\nu^i \subseteq P^i_3$ for $i=1, 2, 3$.
Inserting these values in \eqref{p11}--\eqref{p13} and considering \eqref{yes1},
 \begin{eqnarray}
 \sum_{j\in P^1_3}d^4_j\nabla_1 G_j(\bar x,\bar y) + \sum_{j\in P^2_3}d^5_j\nabla_1 g_j(\bar x,\bar y) + \sum_{j\in P^3_3}(-\lambda d^6_j)\nabla_1 g_j(\bar x,\bar z)=0,\label{p111}\\
   \sum_{j\in P^1_3}d^4_j\nabla_2 G_j(\bar x,\bar y) + \sum_{j\in P^2_3}d^5_j \nabla_2 g_j(\bar x,\bar y)=0,\label{p122}\\
  \sum_{j\in P^3_3}d^6_j\nabla_2 g_j(\bar x,\bar z)=0.\label{p133}
\end{eqnarray}
Since the LLICQ \eqref{LICQ} is satisfied at $(\bar x, \bar z)$ and $P^3_3 \subseteq I^3$ holds, it follows from \eqref{p133} that  $d^6_j=0$ for $j\in P^3_{3}$. Inserting these values in \eqref{p111} and combining the resulting equation with \eqref{p122},
\begin{equation}\label{p122-1}
    \sum_{j\in P^1_3}d^4_j\nabla G_j(\bar x,\bar y) + \sum_{j\in P^2_3}d^5_j \nabla g_j(\bar x,\bar y)=0.
\end{equation}
Considering the fulfillment of the ULICQ \eqref{LICQ-leader}  at $(\bar x, \bar y)$, and taking into account that $P^i_3 \subseteq I^i$ for $i=1, 2$, we can deduce from \eqref{p122-1} that $d^4_j=0$ for $j\in P^1_3$ and $d^5_j=0$ for $j\in P^2_3$. This concludes the proof as we have shown that all the components of $d$ are zero.
\end{proof}
Note that the strict complementarity condition imposed here is restricted to the lower-level problem and does not necessarily imply the local differentiability of the lower-level optimal solution map as known in earlier results on the Newton method, see, e.g., \cite{KleinmichelSchonefeld1988,KojimaHirabayashi1984} or in the literature on nonlinear parametric optimization, see, e.g., \cite{FiaccoBook1983}.   As we also have the LLICQ, the differentiability of lower-level optimal solution is usually guarantied when a strong second order sufficient condition (SOSSC)-type condition restricted to the lower-level problem is satisfied as well. The LSCC here only allows us to deal with the minus sign appearing on the lower-value function in problem \eqref{LLVF} and is responsible for many of the stationarity concepts for the problem; cf. \cite{DempeDuttaMordukhovichNewNece, DempeZemkohoGenMFCQ, YeZhuOptCondForBilevel1995}.
Next we discuss two possible scenarios to avoid imposing the LSCC. In the first case, we assume that the lower-level feasible set is unperturbed.
\begin{thm}\label{SOSSC-Theorem 2}{Let $g$ in problem \eqref{P} be independent of the upper-level variable $x$
and suppose that the point $\bar \zeta:=(\bar x, \bar y, \bar z, \bar u, \bar v, \bar w)$ satisfies conditions \eqref{KS-1}--\eqref{KS-3} for some $\lambda >0$. Furthermore, assume that the family $\left\{\nabla_x G_j(\bar x,\bar y):\; j\in I^1\right\}$ is linearly independent and the LLICQ holds at $\bar y$ and $\bar z$.
 If additionally,
  \begin{equation}\label{SOSSC-1}
  \begin{array}{l}
 (d^{1,2})^{\top} \nabla^2\mathcal{L}^{\lambda}(\bar \zeta) d^{1,2} > \lambda (d^{1,3})^{\top} \nabla^2\ell^*(\bar \zeta) d^{1,3}
  \end{array}
  \end{equation}
 for all $d \in Q(\bar \zeta)\setminus \{0\}$, where
\begin{equation}\label{barell}
\nabla^2 \ell^*(\bar \zeta):= \left[\begin{array}{rr}
                                                                              \nabla^2_{1}\ell(\bar \zeta) & \nabla^2_{21}\ell(\bar \zeta) \\
                                                                              -\nabla^2_{12}\ell(\bar \zeta) & -\nabla^2_{2}\ell(\bar \zeta)
                                                                            \end{array}
                                                                                 \right],
\end{equation}
then the function $\Phi^{\lambda}$ is CD-regular at the point $\bar \zeta:=(\bar x, \bar y, \bar z, \bar u, \bar v, \bar w)$.
}
\end{thm}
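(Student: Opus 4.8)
The plan is to mirror the proof of Theorem \ref{SOSSC-Theorem 1-1}: fix an arbitrary $W^\lambda\in\partial\Phi^\lambda(\bar\zeta)$ in the form \eqref{Wlambda}--\eqref{ab definition}, assume $W^\lambda d=0$ for $d=(d^1,d^2,d^3,d^4,d^5,d^6)$, and show $d=0$. The only structural simplification from the hypothesis is that $g$ does not depend on $x$, so $B_{21}=\nabla_1 g(\bar x,\bar y)=0$ and $B_{31}=\nabla_1 g(\bar x,\bar z)=0$. Consequently the six block equations of $W^\lambda d=0$ are precisely \eqref{p11}--\eqref{p16} with every $\nabla_1 g$ term deleted; I keep the index sets $P^i_1,P^i_2,P^i_3$ ($i=1,2,3$) and the consequences \eqref{yes1}--\eqref{yes3}, which remain valid verbatim.

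The crucial departure is the choice of multipliers when forming a scalar identity. Instead of multiplying \eqref{p13} by $(d^3)^\top$, I would multiply \eqref{p11} by $(d^1)^\top$, \eqref{p12} by $(d^2)^\top$, and \eqref{p13} by $-(d^3)^\top$, then add. Since $\nabla^2_{12}\ell$ and $\nabla^2_{21}\ell$ are mutual transposes, this sign flip cancels the two $(d^1,d^3)$ cross terms and reverses the sign of the $(d^3)^\top\nabla^2_2\ell\,d^3$ term, so the $\ell$-contribution collapses to $-\lambda\big[(d^1)^\top\nabla^2_1\ell\,d^1-(d^3)^\top\nabla^2_2\ell\,d^3\big]=-\lambda (d^{1,3})^\top\nabla^2\ell^*(\bar\zeta)d^{1,3}$ with $\nabla^2\ell^*$ exactly as in \eqref{barell}, while the upper-level part is $(d^{1,2})^\top\nabla^2\mathcal{L}^\lambda(\bar\zeta)d^{1,2}$ as before. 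The identity reads
\[
(d^{1,2})^\top\nabla^2\mathcal{L}^\lambda(\bar\zeta)d^{1,2}-\lambda (d^{1,3})^\top\nabla^2\ell^*(\bar\zeta)d^{1,3}+(\text{boundary terms})=0.
\]

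The boundary contributions of the first two constraint blocks reduce by \eqref{yes1}--\eqref{yes3} to $\sum_{j\in P^1_1}c^1_j(d^4_j)^2+\sum_{j\in P^2_1}c^2_j(d^5_j)^2\ge 0$, exactly as in Theorem \ref{SOSSC-Theorem 1-1}. The essential new point is the third block term $\lambda\sum_j d^6_j\,\nabla_2 g_j(\bar x,\bar z)^\top d^3$, which now carries a $+$ sign thanks to the flip. Writing $s_j:=\nabla_2 g_j(\bar x,\bar z)^\top d^3$ and $t_j:=d^6_j$, equation \eqref{p16} is $a^3_j s_j+b^3_j t_j=0$: for $j\in P^3_2$ it forces $t_j=0$, for $j\in P^3_3$ it forces $s_j=0$, and for $j\in P^3_1$ (where $a^3_j>0,\,b^3_j<0$ are permitted by the disk in \eqref{ab definition}) it gives $s_j=c^3_j t_j$ with $c^3_j:=-b^3_j/a^3_j>0$. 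Hence $t_j s_j\ge 0$ termwise and the third boundary term equals $\lambda\sum_{j\in P^3_1}c^3_j(d^6_j)^2\ge 0$. This is exactly the step where LSCC \eqref{LSCC} was used before (to force $\theta^3=\emptyset$) and where it is now dispensable: for $j\in\theta^3$ the disk constraint by itself delivers the correct sign. Since \eqref{yes2} together with $\nu^i\subseteq P^i_3$ shows $(d^1,d^2,d^3)\in Q(\bar\zeta)$, assumption \eqref{SOSSC-1} makes the first two terms strictly positive unless $(d^1,d^2,d^3)=0$; as every boundary term is nonnegative, the identity forces $d^1=d^2=d^3=0$ and then $d^4_j=0$ on $P^1_1$, $d^5_j=0$ on $P^2_1$, $d^6_j=0$ on $P^3_1$.

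It remains to eliminate the $P^i_3$-components of $d^4,d^5,d^6$ (those on $P^i_2$ already vanish by \eqref{yes1}). Here $x$-independence pays off a second time: substituting $d^1=d^2=d^3=0$ into \eqref{p11}, all $\nabla_1 g$ terms drop and the upper-level-$x$ block reduces to the decoupled relation $\sum_{j\in P^1_3}d^4_j\,\nabla_x G_j(\bar x,\bar y)=0$; since $P^1_3\subseteq I^1$, the assumed linear independence of $\{\nabla_x G_j(\bar x,\bar y):j\in I^1\}$ \emph{alone} gives $d^4=0$. Feeding this into \eqref{p12} yields $\sum_{j\in P^2_3}d^5_j\,\nabla_2 g_j(\bar x,\bar y)=0$, so LLICQ at $\bar y$ gives $d^5=0$; and \eqref{p13} yields $\sum_{j\in P^3_3}d^6_j\,\nabla_2 g_j(\bar x,\bar z)=0$, so LLICQ at $\bar z$ gives $d^6=0$. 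Thus $d=0$, every $W^\lambda\in\partial\Phi^\lambda(\bar\zeta)$ is nonsingular, and $\Phi^\lambda$ is CD-regular at $\bar\zeta$. I expect the sign analysis of the third boundary term over $\theta^3$ to be the crux, as it is the device replacing LSCC, whereas the decoupling that lets ULICQ be weakened to linear independence of the $x$-gradients of $G$ is the routine dividend of $g$ being unperturbed.
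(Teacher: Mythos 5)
Your proof is correct and takes essentially the same route as the paper's: multiplying the third block equation by $-(d^3)^\top$ is exactly the paper's device of recasting \eqref{p13} (up to an innocuous factor of $\lambda$ that the paper itself handles loosely), producing the same scalar identity with $\nabla^2\ell^*(\bar\zeta)$ and the nonnegative term $\sum_{j\in P^3_1}c^3_j(d^6_j)^2$ whose favorable sign is precisely what renders LSCC dispensable. Your concluding elimination of $d^4$, $d^5$, $d^6$ via the decoupled $x$-block of \eqref{p11} and LLICQ at $\bar y$ and $\bar z$ correctly spells out the adaptation behind the paper's brief remark that ``the rest of the proof then follows as that of Theorem \ref{SOSSC-Theorem 1-1}.''
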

\begin{proof} Considering the counterpart of \eqref{p11}--\eqref{p16} when $g$ is independent of $x$ and proceeding as in the proof of the previous theorem, we have  \eqref{yes1}, \eqref{yes2} and
\begin{equation}\label{yes3thm34}
    \nabla G_j (\bar x,\bar y) d^{1,2}=c^1_j d^4_j, \;  \nabla g_j (\bar y) d^{2}=c^2_j d^5_j, \; \mbox{ and }\,\nabla g_j (\bar z) d^{3}=c^3_j d^6_j
\end{equation}
for $j\in P^1_1$, $j\in P^2_1$, and $j\in P^3_1$, respectively. Here, $c^1_j$ for  $j\in P^1_1$ and $c^2_j$ for  $j\in P^2_1$ are defined as in \eqref{yes2} while  $c^3_j:=-\frac{b^3_j}{a^3_j}$ for $j\in P^3_1$, cf. \eqref{ab definition}. Next, replacing the counterpart of \eqref{p13} with
$$
\nabla^2_{12} \ell(\bar\zeta)d^1 + \nabla^2_{2} \ell (\bar \zeta)d^3 + \nabla g(\bar z)^\top d^6=0
$$
and  multiplying this equality, \eqref{p12}, and \eqref{p11} from the left by $(d^3)^\top$, $(d^2)^\top$, and $(d^1)^\top$, respectively, and adding the resulting sums together, we obtain
\begin{equation*}
(d^{1,2})^\top \nabla^2 \mathcal{L}^\lambda (\bar\zeta) d^{1,2} - \lambda (d^{1,3})^\top \nabla^2 \ell^* (\bar\zeta) d^{1,3}
  + \sum_{j\in P^1_1}c^1_j (d^4_j)^2 + \sum_{j\in P^2_1}c^2_j (d^5_j)^2 + \sum_{j\in P^3_1}c^3_j (d^6_j)^2=0,
\end{equation*}
while taking into account \eqref{yes3thm34} and the counterparts of \eqref{yes1} and \eqref{yes2}. Hence, it follows from assumption \eqref{SOSSC-1} that $d^{1}=0$, $d^{2}=0$, $d^{3}=0$, $d^4_j=0$ for $j\in P^1_1$, $d^5_j=0$ for $j\in P^2_1$, and $d^6_j=0$ for $j\in P^3_1$.
The rest of the proof then follows as that of Theorem \ref{SOSSC-Theorem 1-1}.
\end{proof}
For the next result, the LSCC is also not needed, \af{and} $g$ does not necessarily have to be independent of the upper-level variable.
\begin{thm}\label{SOSSC-Theorem 3}
Let
$\bar \zeta:=(\bar x, \bar y, \bar z, \bar u, \bar v, \bar w)$ satisfy the optimality conditions \eqref{KS-1}--\eqref{KS-3} for some $\lambda >0$. Suppose that the ULICQ and LLICQ hold at $(\bar x, \bar y)$ and $(\bar x, \bar z)$, respectively. Then $\Phi^{\lambda}$ is CD-regular at $\bar \zeta:=(\bar x, \bar y, \bar z, \bar u, \bar v, \bar w)$ provided we also have
  \begin{equation}\label{SOSSC-2}
 (d^{1,2})^{\top} \nabla^2 \mathcal{L}^\lambda (\bar \zeta) d^{1,2} > \lambda \left\{(d^{1,3})^{\top} \nabla^2 \ell (\bar \zeta) d^{1,3} + \sum_{j\in P^3_1}c^3_j (e_j)^2\right\}
  \end{equation}
 for all $(d, e)\in \left[Q(\zeta)\times \mathbb{R}^{|P^3_1|}\right]\setminus \{0\}$, with $c^3_j:=-\frac{b^3_j}{a^3_j}$ for $j\in P^3_1$; cf.  \eqref{ab definition}.
\end{thm}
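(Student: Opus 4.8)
The plan is to transcribe the proof of Theorem \ref{SOSSC-Theorem 1-1} almost line for line, the single structural change being that the lower-level biactive set $P^3_1$ is no longer forced empty (LSCC is dropped), so its contribution must be tracked and absorbed by the auxiliary variable $e$ appearing in \eqref{SOSSC-2}. First I would fix an arbitrary $W^\lambda\in\partial\Phi^\lambda(\bar\zeta)$, write it in the form \eqref{Wlambda}--\eqref{ab definition} guaranteed by Theorem \ref{Jacobian Phi}, and consider any $d=(d^1,d^2,d^3,d^4,d^5,d^6)$ with $W^\lambda d=0$, which produces the six block equations \eqref{p11}--\eqref{p16}. With the index partition $P^i_1,P^i_2,P^i_3$ of the earlier proof, equations \eqref{p14}--\eqref{p16} give \eqref{yes1} on the $P^i_2$ sets, \eqref{yes2} on the $P^i_3$ sets, and on the $P^i_1$ sets the relations $\nabla G_j(\bar x,\bar y)^\top d^{1,2}=c^1_j d^4_j$, $\nabla g_j(\bar x,\bar y)^\top d^{1,2}=c^2_j d^5_j$, and, crucially, the new relation $\nabla g_j(\bar x,\bar z)^\top d^{1,3}=c^3_j d^6_j$, where $c^i_j:=-b^i_j/a^i_j>0$.

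Next I would multiply \eqref{p11}, \eqref{p12}, \eqref{p13} on the left by $(d^1)^\top$, $(d^2)^\top$, $(d^3)^\top$ and add, obtaining the analogue of \eqref{Quad-term}. The decisive difference from Theorem \ref{SOSSC-Theorem 1-1} is that the term $-\lambda(d^6)^\top\nabla g(\bar x,\bar z)d^{1,3}$ no longer vanishes: splitting the sum over $P^3_1,P^3_2,P^3_3$ and using the relations just recorded (with $d^6_j=0$ on $P^3_2$ and $\nabla g_j^\top d^{1,3}=0$ on $P^3_3$) yields $(d^6)^\top\nabla g(\bar x,\bar z)d^{1,3}=\sum_{j\in P^3_1}c^3_j(d^6_j)^2$. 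Handling the upper-level terms exactly as before then reduces the summed identity to
\[
(d^{1,2})^\top\nabla^2\mathcal{L}^\lambda(\bar\zeta)d^{1,2}-\lambda(d^{1,3})^\top\nabla^2\ell(\bar\zeta)d^{1,3}-\lambda\sum_{j\in P^3_1}c^3_j(d^6_j)^2+\sum_{j\in P^1_1}c^1_j(d^4_j)^2+\sum_{j\in P^2_1}c^2_j(d^5_j)^2=0.
\]

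The heart of the argument is then to set $e_j:=d^6_j$ for $j\in P^3_1$ and to verify that $(d^1,d^2,d^3)\in Q(\bar x,\bar y,\bar z)$, which follows from the $P^i_3$-equalities in \eqref{yes2} together with $\nu^i\subseteq P^i_3$. If $(d,e)\neq 0$, assumption \eqref{SOSSC-2} makes the quantity $(d^{1,2})^\top\nabla^2\mathcal{L}^\lambda d^{1,2}-\lambda(d^{1,3})^\top\nabla^2\ell d^{1,3}-\lambda\sum_{j\in P^3_1}c^3_j(d^6_j)^2$ strictly positive; since $c^1_j,c^2_j>0$ render the remaining two sums nonnegative, the displayed identity would force a strictly positive number to equal zero, a contradiction. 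Hence $(d,e)=0$, so $d^1=d^2=d^3=0$ and $d^6_j=0$ on $P^3_1$, whereupon the residual identity $\sum_{j\in P^1_1}c^1_j(d^4_j)^2+\sum_{j\in P^2_1}c^2_j(d^5_j)^2=0$ forces $d^4_j=0$ on $P^1_1$ and $d^5_j=0$ on $P^2_1$.

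Finally, inserting these vanishing components into \eqref{p11}--\eqref{p13} reproduces the system \eqref{p111}--\eqref{p133}. Applying LLICQ at $(\bar x,\bar z)$ to \eqref{p133} (using $P^3_3\subseteq I^3$) kills $d^6_j$ on $P^3_3$, after which combining \eqref{p111} with \eqref{p122} gives \eqref{p122-1}, and ULICQ at $(\bar x,\bar y)$ (using $P^i_3\subseteq I^i$) kills $d^4_j$ on $P^1_3$ and $d^5_j$ on $P^2_3$. All components of $d$ are then zero, so every $W^\lambda\in\partial\Phi^\lambda(\bar\zeta)$ is nonsingular and CD-regularity holds. I expect the only genuinely delicate point to be the bookkeeping in the second paragraph: confirming that the nonempty $P^3_1$ produces exactly the term $-\lambda\sum_{j\in P^3_1}c^3_j(d^6_j)^2$ with the correct sign, so that the auxiliary variable $e$ in \eqref{SOSSC-2} matches it term by term; everything downstream is a faithful transcription of the Theorem \ref{SOSSC-Theorem 1-1} argument.
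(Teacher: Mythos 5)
Your proof is correct and takes essentially the same route as the paper: the paper's own (terser) proof likewise transcribes the argument of Theorem \ref{SOSSC-Theorem 1-1}, replacing \eqref{yes3} by the additional relation $\nabla g_j(\bar x,\bar z)^\top d^{1,3}=c^3_j d^6_j$ for $j\in P^3_1$, arriving at exactly your quadratic identity with the extra term $-\lambda\sum_{j\in P^3_1}c^3_j(d^6_j)^2$, invoking \eqref{SOSSC-2} with $e_j:=d^6_j$, and finishing via LLICQ and ULICQ as in Theorem \ref{SOSSC-Theorem 1-1}. Your bookkeeping is faithful to the paper's (you even write the correct $d^{1,3}$ in the third relation, where the paper's \eqref{yes33} has a typographical $d^{1,2}$), so there is nothing to repair.
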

\begin{proof}
Also proceeding as in the proof of Theorem \ref{SOSSC-Theorem 1-1} while replacing \eqref{yes3} with
\begin{equation}\label{yes33}
    \nabla G_j (\bar x,\bar y) d^{1,2}=c^1_j d^4_j, \;  \nabla g_j (\bar x,\bar y) d^{1,2}=c^2_j d^5_j, \,\mbox{ and }\, \nabla g_j (\bar x,\bar z) d^{1,2}=c^3_j d^6_j
\end{equation}
for $j\in P^1_1$, $j\in P^2_1$,  and $j\in P^3_1$, respectively, we get equality
 $$
 \begin{array}{l}
   (d^{1,2})^\top \nabla^2 \mathcal{L}^\lambda (\zeta) d^{1,2} - \lambda \left\{(d^{1,3})^\top \nabla^2 \ubar{\ell} (\zeta) d^{1,3} + \sum_{j\in P^3_1}c^3_j (d^6_j)^2\right\} + \sum_{j\in P^1_1}c^1_j (d^4_j)^2 + \sum_{j\in P^2_1}c^2_j (d^5_j)^2 =0
 \end{array}
 $$
by inserting \eqref{yes1}--\eqref{yes2} and \eqref{yes33} in the counterpart of \eqref{Quad-term}, as $\theta^3$ is not necessarily empty. Hence, under assumption \eqref{SOSSC-2}, we get $d^{1}=0$, $d^{2}=0$, $d^{3}=0$, $d^4_j=0$ for $j\in P^1_1$, $d^5_j=0$ for $j\in P^2_1$ and $d^6_j=0$ for $j\in P^3_1$. Similarly, the rest of the proof then follows as for Theorem \ref{SOSSC-Theorem 1-1}.
\end{proof}
Considering the structure of the generalized second order subdifferential of $\varphi$ \eqref{varphi} (see \cite{zemkoho2017estimates}), condition \eqref{SOSSC-2}  can be seen as the most natural extension to our problem \eqref{Penalized-LLVF} of the strong second order sufficient condition used for example in \cite{FischerASpecial1992,QiJiangSemismooth1997}. To see this, note that condition \eqref{SOSSC-2} can be replaced by the following condition, for all $(d^{1,2,3}, e)$ in  $ \left[Q(\zeta)\times \mathbb{R}^q\right]\setminus \{0\}$:
  \begin{equation*}\label{SOSSC-1-2-3-4}
  \begin{array}{l}
 (d^{1,2})^{\top} \nabla^2\mathcal{L}^\lambda(\bar \zeta) d^{1,2} > \lambda \left\{(d^{1,3})^{\top} \nabla^2\ell(\bar \zeta) d^{1,3} + e^\top\nabla g(\bar x,\bar z) d^{1,3}\right\}.
  \end{array}
  \end{equation*}
\begin{example}\label{example78}
Consider the bilevel optimization problem
\begin{equation}\label{example-pb}
  \begin{array}{l}
\underset{x,y}\min~x^2 + y^2_1 + y^2_2\\
\mbox{s.t. }\;y\in S(x):=\arg\underset{y}\min~\left\{\|y-(x, -1)^\top\|^2:\; y_1-y_2\leq 0, \; -y_1 - y_2\leq 0\right\},
\end{array}
\end{equation}
where the lower-level problem is taken from \cite[Chapter 1]{FiaccoBook1983}.
The LLVF \eqref{varphi} can be obtained as
$$
\varphi(x) = \left\{\begin{array}{ll}
                     \frac{1}{2} (1-x)^2 & \mbox{if } x < -1, \\
                     1+x^2   & \mbox{if }  -1 \leq x \leq 1,\\
                     \frac{1}{2} (1+x)^2 & \mbox{if } x > 1.
                   \end{array}
\right.
$$
The optimal solution of problem \eqref{example-pb} is $(\bar x, \bar y)$ with $\bar x=0$ and $\bar y=(0,0)$.
Considering the expression of $\varphi$ above, one can easily check that $(0, 0, 0, 0)$ satisfies the conditions
 $$
 y_1-y_2\leq 0, \;\; -y_1 - y_2\leq 0,\;\; \|y-(x, -1)^\top\|^2 - \varphi(x) + \varsigma=0.
 $$
Thus, for problem \eqref{example-pb}, condition \eqref{partial calmness} holds with $(\bar x, \bar y)=(0, 0, 0)$ and $U=\mathbb{R}^4$.
Next, note that the LMFCQ \eqref{MFCQ-follower} holds at any lower-level feasible point.  The lower-level optimal solution mapping $S$ is single valued and continuous in this case; hence inner-semicontinuous \cite{DempeDuttaMordukhovichNewNece}. Hence,  $(\bar x, \bar y)$ satisfies \eqref{KS-1}--\eqref{KS-3} and  subsequently, the corresponding calculations show that the point $\bar \zeta:=(\bar x, \bar y, \bar z, \bar v, \bar w)$, where
$\bar x= 0$, $\bar y=(0, 0)$, $\bar z:=(0, 0)$, $\bar v:=(\lambda, \lambda)$, and $\bar w=(1, 1)$ with $\lambda >0$,
solves  \eqref{Phi-lambda}.
Furthermore,  $Q(\bar x, \bar y, \bar z) = \mathbb{R}\times \{(0, 0, 0, 0)\}$ and for all $d^{1,2,3}\in Q(\bar x, \bar y, \bar z)\setminus \{0\}$,
$$
(d^{1,2})^{\top} \nabla^2\mathcal{L}^{\lambda}(\bar \zeta) d^{1,2} - \lambda (d^{1,3})^{\top} \nabla^2\ell(\bar \zeta) d^{1,3}= 2(d^1)^2>0.
$$
Hence, for problem \eqref{example-pb}, $\Phi^\lambda$ \eqref{Phi-lambda} is CD-regular at $\bar \zeta:=(\bar x, \bar y, \bar z, \bar v, \bar w)$, for any value of  $\lambda >0$.
\end{example}

\section{Robinson-type condition}\label{Robinson-type condition}
For a standard nonlinear optimization problem with twice continuously differentiable functions, the  {Robinson condition} \cite{Robinson1982} is said to hold at one of its KKT points if LICQ and a strong second order sufficient condition (SSOSC) are satisfied.
It was shown in \cite{QiJiangSemismooth1997} that if a standard nonlinear optimization is SC$^1$ and satisfies the Robinson condition, then the CD-regularity condition holds for the corresponding counterpart of function $\Phi^\lambda$ \eqref{Phi-lambda}. Considering the structure of the results from the previous section, it can be argued that the combination of assumptions in Theorems \ref{SOSSC-Theorem 1-1}, \ref{SOSSC-Theorem 2} or \ref{SOSSC-Theorem 3} corresponds to an extension of Robinson's condition to the context of bilevel optimization. However, another implication of Robinson's condition, i.e., precisely of the SSOSC, is that it ensures that a given point is a strict local optimal solution for the corresponding nonlinear programming problem.

The aim of this section is to enhance the second order assumption in the previous section so that it can guaranty that points computed by our algorithm are strict local optimal points. To proceed, we introduce the following cone of feasible directions
\begin{equation}\label{C(x,y)}
\begin{array}{ll}
         C^\lambda(\bar x, \bar y) :=   \big\{d\,|& \nabla G_i(\bar x, \bar y)^\top d = 0\, \mbox{ for }\, i\in \nu^1,\; \nabla G_i(\bar x, \bar y)^\top d \leq 0\, \mbox{ for }\, i\in \theta^1,\\
                                          & \nabla g_j(\bar x, \bar y)^\top d = 0\, \mbox{ for }\, j\in \nu^2,\; \nabla g_j(\bar x, \bar y)^\top d \leq 0\, \mbox{ for }\, j\in \theta^2,\\
                                                 &  \nabla F(\bar x, \bar y)^\top d + \lambda f(\bar x, \bar y)^\top d - \lambda \nabla_1 \ell (\bar x, z, w)^\top d^1 \leq 0\, \mbox{ for }\, z\in S(\bar x)\big\},
                          \end{array}
\end{equation}
where $\{w\} := \{w(z)\}=\Lambda(\bar x, z)$ for a fixed  $z\in S(\bar x)$, as the LLICQ \eqref{LICQ} will be assumed to hold at $(\bar x, z)$ for all $z\in S(\bar x)$. Also note that as in \eqref{Ponana}, $d$ can be written as $d:=((d^1)^\top, (d^2)^\top)^\top$.
Furthermore, we will use the following modified version of the upper-level Lagrangian function \eqref{upper-level lagrangian}
$$
 \bar{\mathcal{L}}^{\lambda}_{\kappa}(x,y,u,v):= \kappa \left(F(x,y) + \lambda f(x,y)\right) + \sum_{i\in I^1(d)}u_iG_i(x,y) +   \sum_{j\in I^2(d)}v_jg_j(x,y),
$$
 where  the set $I^1(d)$ (resp. $I^2(d)$) represents the set of indices $i\in I^1$ (resp. $j\in I^2$) such that we have $\nabla G_i(\bar x, \bar y)^\top d=0$ (resp. $\nabla g_j(\bar x, \bar y)^\top d=0$). Recall that $I^1$ and $I^2$ are given in \eqref{I1u} and \eqref{I1l}. In the next result, we first provide slightly general  SSOSC-type condition for problem \eqref{Penalized-LLVF}.
\begin{thm}\label{SuffNon-NEW}
Let the point $\bar\zeta:=(\bar x, \bar y, \bar z, \bar u, \bar v, \bar w)$ satisfy the conditions \eqref{KS-1}--\eqref{KS-3} for some $\lambda>0$. Suppose that the lower-level problem is convex at $\bar x$ (i.e., $f(\bar x, .)$ and $g_i(\bar x, .)$, $i=1, \ldots, q$, are convex) and the  assumptions in Theorem \ref{Theorem-phi''} hold for all $d\in C^\lambda(\bar x, \bar y)$. Then, $(\bar x, \bar y)$ is a strict local optimal solution of problem \eqref{Penalized-LLVF} provided that, for all $d\in C^\lambda(\bar x, \bar y)\setminus \{0\}$, the condition
\begin{equation}\label{SOSCB}
  \begin{array}{l}
d^\top \nabla^2\bar{\mathcal{L}}^{\lambda}_{\kappa_\circ}(\bar x, \bar y, u, v) d > \lambda\sum^{k}_{t=1}\kappa_t\xi_{d^1}(\bar x,  z^t)
  \end{array}
  \end{equation}
  is satisfied for some vectors $u$, $v$, $z^t$, and $\kappa_t$, with $z^t\in S_1(\bar x; d^1)$ and $\Lambda(\bar x, z^t)=\{w^t\}$, for $t=1, \ldots, k$, where $k$ is some natural number and $\kappa_{\circ}:= \sum^{k}_{i=1}\kappa_t$, such that we have
 \begin{eqnarray}
 \nabla_1\bar{\mathcal{L}}^{\lambda}_{\kappa_\circ}(\bar x, \bar y, u, v) - \lambda  \sum^{k}_{t=1}\kappa_t \nabla_1 \ell(\bar x, z^t, w^t)=0, \;\;
 \nabla_2\bar{\mathcal{L}}^{\lambda}_{\kappa_\circ}(\bar x, \bar y, u, v)=0,\label{onam1}\\
             \kappa_\circ + \sum_{i\in I^1(d)}u_i + \sum_{j\in I^2(d)}v_j =1,\;\;
\kappa_\circ \geq 0, \;\; u_i\geq 0\,\mbox{ for }\, i\in I^1(d),\;\; v_j\geq 0 \,\mbox{ for }\, j\in I^2(d).\label{onam3}
 \end{eqnarray}
\end{thm}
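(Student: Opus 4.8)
The plan is to argue by contradiction, following the classical pattern for second–order sufficiency but treating the nonsmooth term $-\lambda\varphi$ through the second–order directional derivative of Theorem \ref{Theorem-phi''}. Suppose $(\bar x,\bar y)$ is not a strict local minimizer of \eqref{Penalized-LLVF}. Then there is a sequence of feasible points $(x^k,y^k)\to(\bar x,\bar y)$, $(x^k,y^k)\neq(\bar x,\bar y)$, with $F(x^k,y^k)+\lambda f(x^k,y^k)-\lambda\varphi(x^k)\le F(\bar x,\bar y)+\lambda f(\bar x,\bar y)-\lambda\varphi(\bar x)$. Writing $t_k:=\|(x^k,y^k)-(\bar x,\bar y)\|\downarrow 0$ and $d^k:=((x^k,y^k)-(\bar x,\bar y))/t_k$, I pass to a subsequence along which $d^k\to d=(d^1,d^2)$ with $\|d\|=1$. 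The first task is to show $d\in C^\lambda(\bar x,\bar y)\setminus\{0\}$: first–order expansions of the active constraints give $\nabla G_i(\bar x,\bar y)^\top d\le 0$ and $\nabla g_j(\bar x,\bar y)^\top d\le 0$, while dividing the objective inequality by $t_k$ and letting $k\to\infty$ (using local Lipschitzness and directional differentiability of $\varphi$ from Theorem \ref{Theorem-phi'}) yields $(\nabla F+\lambda\nabla f)^\top d-\lambda\varphi'(\bar x;d^1)\le 0$, i.e.\ the last group of constraints in \eqref{C(x,y)}. Substituting the KKT representation of $\nabla F+\lambda\nabla f$ coming from \eqref{KS-1}--\eqref{VS-2} into this inequality, the $\lambda\nabla_1\ell$ terms cancel and one is left with $\sum_i\bar u_i\nabla G_i^\top d+\sum_j\bar v_j\nabla g_j^\top d\ge 0$; since every summand is $\le 0$, the positive–multiplier terms must vanish, upgrading the active inequalities to $\nabla G_i^\top d=0$ for $i\in\nu^1$ and $\nabla g_j^\top d=0$ for $j\in\nu^2$, so that indeed $d\in C^\lambda(\bar x,\bar y)$.

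Next I would bring in the multiplier system \eqref{onam1}--\eqref{onam3}. Using the nonnegativity of the active multipliers and feasibility of $(x^k,y^k)$ one gets $\bar{\mathcal L}^{\lambda}_{\kappa_\circ}(x^k,y^k,u,v)\le\kappa_\circ\big(F(x^k,y^k)+\lambda f(x^k,y^k)\big)$ with equality at $(\bar x,\bar y)$, hence $\kappa_\circ\big[(F+\lambda f)(x^k,y^k)-(F+\lambda f)(\bar x,\bar y)\big]\ge\bar{\mathcal L}^{\lambda}_{\kappa_\circ}(x^k,y^k,u,v)-\bar{\mathcal L}^{\lambda}_{\kappa_\circ}(\bar x,\bar y,u,v)$. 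A second–order Taylor expansion of the smooth $\bar{\mathcal L}^{\lambda}_{\kappa_\circ}$, with its gradient replaced via \eqref{onam1}, turns the right–hand side into $t_k\lambda\sum_t\kappa_t\nabla_1\ell(\bar x,z^t,w^t)^\top d^{k}_1+\tfrac12 t_k^2 (d^k)^\top\nabla^2\bar{\mathcal L}^{\lambda}_{\kappa_\circ}d^k+o(t_k^2)$. Combining this with the descent inequality $\kappa_\circ[(F+\lambda f)(x^k,y^k)-(F+\lambda f)(\bar x,\bar y)]\le\kappa_\circ\lambda[\varphi(x^k)-\varphi(\bar x)]$ reduces the whole argument to estimating $\varphi(x^k)-\varphi(\bar x)$ from above.

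This is the crux and the step I expect to be the main obstacle. Because $\varphi$ is a marginal (minimum) function, an upper estimate must come from feasible lower–level selections. For each given $z^t\in S_1(\bar x;d^1)$ I would construct a $C^2$ curve $x\mapsto z^t(x)$ with $z^t(\bar x)=z^t$ and $g_j(x,z^t(x))=0$ on the constraints active at $(\bar x,z^t)$ (hence lower–level feasible near $\bar x$), whose velocity $\nabla z^t(\bar x)d^1$ is chosen to realize the minimizer of $\xi_{d^1}(\bar x,z^t)$ over $\mathcal Z_{d^1}(\bar x,z^t)$; existence and this extremal choice rest on LLICQ, the submanifold property LSMP and LSOSC, i.e.\ precisely the hypotheses of Theorem \ref{Theorem-phi''}, and amount to reproving its upper–estimate half. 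Setting $h_t(x):=f(x,z^t(x))$, the lower–level stationarity \eqref{KS-2} gives $\nabla h_t(\bar x)=\nabla_1\ell(\bar x,z^t,w^t)$, and eliminating the curve's acceleration through the twice–differentiated active constraints gives $(d^1)^\top\nabla^2 h_t(\bar x)d^1=\xi_{d^1}(\bar x,z^t)$. Since $\varphi(x^k)\le h_t(x^k)$ for every $t$, and $\sum_t\kappa_t=\kappa_\circ$ with $\kappa_t\ge 0$, Taylor expanding the genuinely smooth $h_t$ along the segment to $x^k$ yields $\kappa_\circ[\varphi(x^k)-\varphi(\bar x)]\le\sum_t\kappa_t\big[t_k\nabla_1\ell(\bar x,z^t,w^t)^\top d^{k}_1+\tfrac12 t_k^2 (d^{k}_1)^\top\nabla^2 h_t(\bar x)d^{k}_1\big]+o(t_k^2)$. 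Working with the smooth $h_t$ rather than with a parabolic path for $\varphi$ is exactly what lets me bypass the usual difficulty that $(d^k-d)/t_k$ may be unbounded.

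Finally I would assemble the two estimates. The first–order contributions coincide — both equal $t_k\lambda\sum_t\kappa_t\nabla_1\ell(\bar x,z^t,w^t)^\top d^{k}_1$, by the design of \eqref{onam1} and the identity $\nabla h_t(\bar x)=\nabla_1\ell(\bar x,z^t,w^t)$ — so they cancel; dividing the surviving inequality by $\tfrac12 t_k^2$ and letting $k\to\infty$ leaves $d^\top\nabla^2\bar{\mathcal L}^{\lambda}_{\kappa_\circ}(\bar x,\bar y,u,v)\,d\le\lambda\sum_t\kappa_t\,\xi_{d^1}(\bar x,z^t)$, which contradicts \eqref{SOSCB}. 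The degenerate case $\kappa_\circ=0$ (whence all $\kappa_t=0$ and the $\xi$–terms vanish) is covered by the same chain: the objective drops out, the Lagrangian comparison alone forces $d^\top\nabla^2\bar{\mathcal L}^{\lambda}_{0}(\bar x,\bar y,u,v)\,d\le 0$, again contradicting \eqref{SOSCB}. This contradiction establishes that $(\bar x,\bar y)$ is a strict local optimal solution of \eqref{Penalized-LLVF}.
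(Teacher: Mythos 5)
Your proposal is correct in strategy, but it takes a genuinely different route from the paper. The paper recasts the penalized problem as the unconstrained minimization of the max-function $\phi_\lambda=\max\{\psi_\lambda,\psi\}$ and verifies the abstract second-order sufficient conditions of Bonnans--Shapiro: a first-order condition $\phi_\lambda'(\bar x,\bar y;\cdot)\ge 0$ proved by exhibiting a zero element of the convex subdifferential $\partial_d\phi_\lambda'(\bar x,\bar y;0)$ via \eqref{KS-1}--\eqref{KS-3}; a second-order positivity condition \eqref{molio}, where the inner minimization $\inf_e\phi_\lambda''(\bar x,\bar y;d,e)$ is rewritten as a linear semi-infinite program \eqref{primalPB--1} and the multiplier system \eqref{onam1}--\eqref{onam3} together with \eqref{SOSCB} emerges as its dual (using the extended MFCQ and compactness of $S_1(\bar x;d^1)$); and a second-order epiregularity condition for $\phi_\lambda$, resting on epiregularity of $-\varphi$ from \cite[Theorem 4.142]{BonnansShapiroBook2000}. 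You instead run a primal contradiction argument: extract a limiting direction $d\in C^\lambda(\bar x,\bar y)\setminus\{0\}$ from a hypothetical non-strict minimizing sequence (your upgrade of the active inequalities to equalities on $\nu^1,\nu^2$ via the stationarity conditions and $\bar z\in S(\bar x)$ -- where lower-level convexity enters exactly as in the paper -- is sound), then take the multipliers furnished by \eqref{onam1}--\eqref{onam3} for that $d$ as given, compare the smooth Lagrangian $\bar{\mathcal L}^\lambda_{\kappa_\circ}$ with the objective, and majorize $\varphi$ by the smooth selections $h_t(x)=f(x,z^t(x))$ built from LLICQ-based implicit-function curves whose velocity realizes the minimizer in $\xi_{d^1}(\bar x,z^t)$ (using $\nabla_2\ell(\bar x,z^t,w^t)=0$ to kill the acceleration term). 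This buys you two things the paper pays for with machinery: you never need semi-infinite duality (the multiplier system is a hypothesis, not something to be derived), and you never need the epiregularity condition $[c]$, since all Taylor expansions are of genuinely $C^2$ functions along the actual sequence, which -- as you correctly observe -- sidesteps the unboundedness of $(d^k-d)/t_k$. The cost is that you must re-prove the upper-estimate half of Shapiro's formula for $\varphi''$ through the selection construction, whereas the paper uses the formula of Theorem \ref{Theorem-phi''} as a black box.

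Two small caveats. First, your weighted-sum step $\kappa_\circ[\varphi(x^k)-\varphi(\bar x)]\le\sum_t\kappa_t[h_t(x^k)-h_t(\bar x)]$ requires each $\kappa_t\ge 0$, which the theorem statement literally asserts only for $\kappa_\circ$; your reading is nevertheless the intended one, since in the paper's proof the $\kappa_t$ arise as nonnegative weights of the semi-infinite dual, and without it the condition \eqref{SOSCB} would not make coherent sense. Second, your construction needs the minimum defining $\xi_{d^1}(\bar x,z^t)$ in \eqref{varphi"-REST} to be attained, and the selection curves need LLICQ at $(\bar x,z^t)$; both are guaranteed under the hypotheses of Theorems \ref{Theorem-phi'} and \ref{Theorem-phi''}, which you have assumed, so these are presentational gaps rather than mathematical ones.
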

\begin{proof}
First, consider the optimization problem in \eqref{Penalized-LLVF} for the parameter $\lambda>$ for which $\bar\zeta$ satisfies the conditions \eqref{KS-1}--\eqref{KS-3}. This problem can obviously be rewritten as
$$
\begin{array}{l}
  \min~F(x,y) +\lambda\left(f(x,y) - \varphi(x)\right) \;\, \mbox{ s.t. } \;\, \psi(x,y)\leq 0,
\end{array}
$$
where the function $\psi$ is defined by $\psi(x,y):=\max~\left\{G_1(x,y), \ldots, G_p(x,y),\; g_1(x,y), \ldots, g_q(x,y)\right\}$. Next, consider the unconstrained optimization problem
\begin{equation}\label{NEWfunc}
\begin{array}{l}
\min~\phi_\lambda(x,y):= \max~\left\{\psi_\lambda(x,y), \;\, \psi(x,y)\right\}
\end{array}
\end{equation}
with $\psi_\lambda(x,y):= F(x,y)- F(\bar x, \bar y) +\lambda\left(f(x,y)-f(\bar x, \bar y)\right) - \lambda \left(\varphi(x)-\varphi(\bar x)\right)$. Based on \cite[Chapter 3]{BonnansShapiroBook2000}, it suffices to show that the function $\phi_\lambda$ satisfies the following three conditions:
\begin{enumerate}
  \item[${[a]}$] $\phi_\lambda$ is directionally differentiable and we have
  \begin{equation}\label{DirectGreater}
    \phi'_\lambda (\bar x, \bar y; d) \geq 0 \;\; \mbox{ for all }\;\; d\in \mathbb{R}^{n+m};
  \end{equation}
  \item[${[b]}$] $\phi_\lambda$ is twice directionally differentiable \eqref{2ndOrderDirectional} and  fulfills the condition
\begin{equation}\label{molio}
\underset{e\in \mathbb{R}^{n+m}}\inf~\phi_\lambda''(\bar x, \bar y; d, e) > 0 \;\mbox{ for all }\; d\neq 0 \;\mbox{ s.t. }\; \phi'_\lambda (\bar x, \bar y; d)=0;
\end{equation}
  \item[${[c]}$] $\phi_\lambda$ satisfies the second order epiregularity condition, i.e., for any $d\in \mathbb{R}^{n+m}$, $t\geq 0$, and any function (path) $e$ from $\mathbb{R}_+$ to $\mathbb{R}^n \times \mathbb{R}^m$ such that $te(t) \rightarrow 0$ as $t \downarrow 0$,
\begin{equation}\label{2ndEpi}
\phi_\lambda \left((\bar x, \bar y) + td + \frac{1}{2}t^2 e(t)\right) \geq \phi_\lambda (\bar x, \bar y) + t \phi'_\lambda (\bar x, \bar y; d) + \frac{1}{2} t^2 \phi''_\lambda\left(\bar x, \bar y; d, e(t)\right) + \circ(t^2).
\end{equation}
\end{enumerate}

To prove condition ${[a]}$, first note that $\psi$ is directionally differentiable, as the upper- and lower-level constraint functions are continuously differentiable. In fact,
\begin{equation}\label{psi'}
   \psi'(\bar x, \bar y; d) = \max~\left\{\nabla G_i(\bar x, \bar y)^\top d \mbox{ for } i\in I^1,\; \nabla g_j(\bar x, \bar y)^\top d \mbox{ for } j\in I^2 \right\}
\end{equation}
 for any $d\in \mathbb{R}^{n+m}$. As for $\psi_\lambda$, recalling that  $\{w\} := \{w(z)\}=\Lambda(\bar x, z)$ for  $z\in S(\bar x)$, thanks to the fulfillment of the LLICQ \eqref{LICQ} at $(\bar x, z)$ for all $z\in S(\bar x)$, it follows from Theorem \ref{Theorem-phi'}  that
\begin{equation}\label{F'-lambda}
   \psi'_\lambda(\bar x, \bar y; d) = \max~\left\{\nabla (F + \lambda f)(\bar x, \bar y)^\top d - \lambda\nabla_x\ell(\bar x, z, w)^\top d^1,\;\, z\in S(\bar x)\right\}
\end{equation}
 for any $d\in \mathbb{R}^{n+m}$. Now, considering the function $\phi_\lambda$ \eqref{NEWfunc}, it holds that for any $d\in \mathbb{R}^{n+m}$,
\begin{equation}\label{phi'lambda}
\phi'_\lambda(\bar x, \bar y; d)=\max~\left\{\psi'_\lambda(\bar x, \bar y; d),\; \; \psi'(\bar x, \bar y; d)\right\},
\end{equation}
as $\phi_\lambda(\bar x, \bar y)= \psi_\lambda(\bar x, \bar y)= 0$ and $\psi(\bar x, \bar y)=0$ if $I^1 \cup I^2 \neq \emptyset$; $\psi'_\lambda(\bar x, \bar y; d)$ and $\psi'(\bar x, \bar y; d)$ are given in \eqref{psi'} and \eqref{F'-lambda}, respectively. Next, observe that as $\phi'_\lambda (\bar x, \bar y; 0) =0$, condition \eqref{DirectGreater} is equivalent to
\begin{equation}\label{FirstOrderOptPsi}
    0\in \partial_d \phi'_\lambda(\bar x, \bar y; 0),
\end{equation}
provided that $\phi'_\lambda (\bar x, \bar y; .)$ is a convex function. This is indeed the case, as $\psi' (\bar x, \bar y; .)$ and $\psi'_\lambda (\bar x, \bar y; .)$ are both convex functions. Recall that in \eqref{FirstOrderOptPsi}, $\partial_d$ represents the subdifferential (in the sense of convex analysis) w.r.t. $d$. It therefore remains to show that we can find an element from $\partial_d \phi'_\lambda(\bar x, \bar y; 0)$ which is zero. To proceed, first recall that $\bar\zeta:=(\bar x, \bar y, \bar z, \bar u, \bar v, \bar w)$
fulfills \eqref{KS-1}--\eqref{KS-3} and let
$$
\varrho := 1 + \sum_{i\in I^1} \bar{u}_i + \sum_{j\in I^2} \bar{v}_j.
$$
We have $\varrho >0$ and subsequently, it holds that $\kappa_\circ + \sum_{i\in I^1} \bar{u}'_i + \sum_{j\in I^2} \bar{v}'_j=1$ and
$$
\begin{array}{l}
\Theta:=\kappa_\circ \left(\nabla F(x,y) +\lambda \nabla f(\bar x, \bar y) - \lambda \left[\begin{array}{c}
                                           \nabla_x \ell(\bar x, \bar z, \bar w)\\
                                           0
                                         \end{array}\right]\right) + \sum_{i\in I^1} \bar{u}'_i\nabla G_i(\bar x, \bar y) + \sum_{j\in I^2} \bar{v}'_j\nabla g_j(\bar x, \bar y) =0,
\end{array}
$$
with $\kappa_\circ := \frac{1}{\varrho}$, $\bar{u}'_i := \frac{1}{\varrho}\bar{u}_i$ for $i\in I^1$, $\bar{v}'_j := \frac{1}{\varrho} \bar{v}'_j$ for $j\in I^2$,  and $\bar w \in \Lambda(\bar x, \bar z)$. By the convexity of the lower-level problem at $\bar x$ and the fulfilment of the LLICQ at $(\bar x, \bar z)$,  it follows that inclusion $\bar w \in \Lambda(\bar x, \bar z)$ is equivalent to $\bar z\in S(\bar x)$. Furthermore, that we can easily show that \eqref{phi'lambda} can be rewritten as
\begin{equation}\label{melan}
\begin{array}{lll}
  \phi'_\lambda(\bar x, \bar y; d) & = &\max~\left\{\nabla G_i(\bar x, \bar y)^\top d \mbox{ for } i\in I^1,\right.\\
                                     && \quad \qquad \nabla g_i(\bar x, \bar y)^\top d \mbox{ for } i\in I^2, \\
                                    & & \quad \qquad \left.\nabla (F + \lambda f)(\bar x, \bar y)^\top d - \lambda\nabla_x\ell(\bar x, z, w)^\top d^1,\;\, z\in S(\bar x)\right\}.
\end{array}
\end{equation}
Hence, as $\phi'_\lambda(\bar x, \bar y; 0)=0$ and all the items in the max operator, regarded as functions of $d$, are convex and zero for $d=0$, it holds that
$$
\begin{array}{lll}
  \partial_d\phi'_\lambda(\bar x, \bar y; 0) & = &\mbox{conv}~\left\{\nabla G_i(\bar x, \bar y) \mbox{ for } i\in I^1,\right.\\
                                     && \quad \qquad \nabla g_j(\bar x, \bar y) \mbox{ for } j\in I^2, \\
                                    & & \quad \qquad \left. \nabla (F + \lambda f)(\bar x, \bar y) - \lambda\left[\begin{array}{c}
                                                                                                                   \nabla_1\ell(\bar x, z, w)\\ 0
                                                                                                                 \end{array}\right],\;\, z\in S(\bar x)\right\},
\end{array}
$$
given that $S(\bar x)$ is a compact set under the uniform compactness assumption made on the mapping $K$ \eqref{K(x)} in Theorem \ref{Theorem-phi'}.  It clearly  follows that $\Theta\in \partial_d \phi'_\lambda(\bar x, \bar y; 0)$ and $\Theta=0$.

To prove condition ${[b]}$, note that $\psi$ is second order directionally differentiable and
  \begin{equation}\label{Psio2ndDirect}
\begin{array}{l}
  \psi''(\bar x, \bar y; d, e)=   \max~\big\{G''_i(\bar x, \bar y; d, e) \mbox{ for } i\in I^1(d), \; g''_j(\bar x, \bar y; d,e) \mbox{ for } j\in I^2(d)\big\}
\end{array}
\end{equation}
for all $d, e\in \mathbb{R}^{n+m}$. Furthermore, for all $d, e\in \mathbb{R}^{n+m}$, it holds that
\begin{align}
\psi''_\lambda(\bar x, \bar y; d, e) \quad \overset{(1)}{=} & \quad \underset{t\downarrow 0}\lim \frac{1}{ t^2/2}\left\{\left[(F+\lambda f)\left((\bar x, \bar y) + td + \frac{1}{2}t^2e\right) - (F+\lambda f)(\bar x, \bar y)\right.\right.\nonumber \\
  &\qquad - \left.\left. t(F+\lambda f)'(\bar x, \bar y; d)\right] \right. - \lambda \left.\left[\varphi\left(\bar x + td^1 + \frac{1}{2}t^2e^1\right) - \varphi(\bar x) -    t \varphi'(\bar x; d^1)\right]\right\}\nonumber\\
   \quad\overset{(2)}{=} & \quad (F+\lambda f)''(\bar x, \bar y; d, e) - \lambda \varphi''(\bar x; d^1)\nonumber\\
    \quad\overset{(3)}{=}& \quad F''(\bar x, \bar y; d, e) + \lambda f''(\bar x, \bar y; d, e) - \lambda \,\underset{z\in S_1(\bar x;\, d^1)}\inf \Big\{\nabla_1\ell(\bar x, z, w)e^1 + \xi_{d^1} (\bar x, z)\Big\}\nonumber\\
  \quad\overset{(4)}{=} & \quad \underset{z\in S_1(\bar x;\, d^1)}\sup \Big\{F''(\bar x, \bar y; d, e) + \lambda f''(\bar x, \bar y; d, e) - \lambda \nabla_1\ell(\bar x, z, w)e^1 - \lambda\xi_{d^1} (\bar x, z)\Big\}\nonumber\\
   \quad\overset{(5)}{=} &\quad  \underset{z\in S_1(\bar x; d^1)}\max\Big\{F''(\bar x, \bar y; d, e) + \lambda f''(\bar x, \bar y; d, e) - \lambda \nabla_1\ell(\bar x, z, w)e^1 - \lambda\xi_{d^1} (\bar x, z)\Big\} \label{phi2ndDirectDolar}
\end{align}
with $d:=d^{1,2}$, $e:=e^{1,2}$ and  $\xi_{d^1} (\bar x, z)$  is defined as in \eqref{varphi"}. Equality (1) follows from the definition in \eqref{2ndOrderDirectional} and the fact that $F$ and $f$ are continuously differentiable and $\varphi$ is directional differentiable at $\bar x$; cf. Theorem \ref{Theorem-phi'}. Equality (2) is based on the fact that $F$ and $f$ are twice continuously differentiable and $\varphi$ is second order directionally differentiable; cf. Theorem \ref{Theorem-phi''}. Equality (3) is based on \eqref{varphi"} and (4) is obtained thanks to the independence of $F''(\bar x, \bar y; d, e)$ and $f''(\bar x, \bar y; d, e)$ from the variable $z$. As for the final equality, (5), it results from the compactness of the set $S_1(\bar x; d)$, which is satisfied under the framework of Theorem \ref{Theorem-phi''}, see corresponding reference.

As for the second order directional derivative of $\phi''_\lambda$, let $d\in C^\lambda(\bar x, \bar y)$. Then $\phi'_\lambda(\bar x, \bar y; d)=0$, considering the fact that \eqref{DirectGreater} holds. Furthermore, as the point $\bar \zeta := (\bar x, \bar y, \bar z, \bar u, \bar v, \bar w)$ satisfies the optimality conditions \eqref{KS-1}--\eqref{KS-3}, it follows that
$$
\sum^{p}_{i=1}\bar u_i\nabla G_i(\bar x, \bar y)^\top d= \sum_{i\in \eta^1}\bar u_i \nabla G_i(\bar x, \bar y)^\top d + \sum_{i\in \theta^1}\bar u_i \nabla G_i(\bar x, \bar y)^\top d + \sum_{i\in \nu^1}\bar u_i \nabla G_i(\bar x, \bar y)^\top d = 0,
$$
considering the fact that $\bar u_i = 0$ for $i\in \eta^1 \cup \theta^1$ and $\nabla G_i(\bar x, \bar y)^\top d=0$ for $i\in \nu^1$ based on the fulfillment of \eqref{VS-3}, cf. partition in \eqref{multiplier sets}, and the definition of $C^\lambda(\bar x, \bar y)$, see \eqref{C(x,y)}. Similarly, we have $\sum^{q}_{j=1}\bar v_j\nabla g_j(\bar x, \bar y)^\top d=0$. Hence, from  \eqref{KS-1}--\eqref{VS-2}, we have
\begin{equation}\label{uopa}
\nabla F(\bar x, \bar y)^\top d + \lambda \nabla f(\bar x, \bar y)^\top d- \lambda \nabla_x\ell(\bar x, \bar z, \bar w)^\top d^1=0.
\end{equation}
Coming back to the definition of $C^\lambda(\bar x, \bar y)$, the last line in particular, it follows from \eqref{F'-lambda} that
$$
 \psi'_\lambda(\bar x, \bar y; d) = \max~\left\{\nabla F(\bar x, \bar y)^\top d + \lambda \nabla f(\bar x, \bar y)^\top d - \lambda\nabla_x\ell(\bar x, z, w)^\top d^1,\;\, z\in S(\bar x)\right\} = 0.
$$
Hence, from the expressions in \eqref{Psio2ndDirect} and \eqref{phi2ndDirectDolar}, it holds that for all $d\in C^\lambda(\bar x, \bar y)$ and $e\in \mathbb{R}^{n+m}$,
\begin{equation}\label{Psi2ndDirectQ}
\begin{array}{rl}
         \phi''_\lambda(\bar x, \bar y; d, e) =\max &\left\{\nabla G_i(\bar x, \bar y)^\top e + d^\top \nabla^2 G_i(\bar x, \bar y) d, \;\; i\in I^1(d),\right.\\
        & \;\; \nabla g_j(\bar x, \bar y)^\top e + d^\top \nabla^2 g_j(\bar x, \bar y) d,\;\; j\in I^2(d),\\
        &\;\; \left.(F+\lambda f)''(\bar x, \bar y; d, e)  - \lambda\nabla_1 \ell(\bar x, z, w)^\top e^1 - \lambda\xi_{d^1} (\bar x, z), \;  z\in S_1(\bar x; d^1) \right\}
\end{array}
\end{equation}
with $(F+\lambda f)''(\bar x, \bar y; d, e) = \nabla (F+\lambda f)(\bar x, \bar y)^\top e + d^\top \nabla^2 (F+\lambda f)(\bar x, \bar y) d$, as $F+\lambda f$ is twice continuously differentiable. The same can be said for any component of  $G$ or $g$.

It follows from \eqref{Psi2ndDirectQ} that for $d\in C^\lambda(\bar x, \bar y)\setminus \{0\}$, the optimization  problem in \eqref{molio}, i.e., to minimize the function $\phi_\lambda''(\bar x, \bar y; d, e)$ with respect to $e\in \mathbb{R}^{n+m}$, can be rewritten as
\begin{equation}\label{primalPB--1}
\underset{\varsigma}\inf~\left\{\varsigma_1|\;\, A\varsigma \geq b, \;\; \ubar{a}(z) \varsigma \geq \ubar{b}(z), \; z\in B\right\},
\end{equation}
where $A$, $b$, $\ubar{a}(z)$, $\ubar{b}(z)$, and $B$ are respectively defined by
$$
\begin{array}{l}
 A:=\left[\begin{array}{cc}
        1 & -\nabla G_1(\bar x, \bar y)^\top \\
         \vdots & \vdots\\
        1 & -\nabla G_{\iota_1}(\bar x, \bar y)^\top\\[1ex]
        1 & -\nabla g_1(\bar x, \bar y)^\top \\
        \vdots & \vdots\\
       1 &  -\nabla g_{\iota_2}(\bar x, \bar y)^\top
        \end{array}
 \right], \;\; b:=\left[\begin{array}{c}
         d^\top \nabla^2 G_1(\bar x, \bar y) d \\
         \vdots\\
         d^\top \nabla^2 G_{\iota_1}(\bar x, \bar y) d\\[1ex]
         d^\top \nabla^2 g_1(\bar x, \bar y) d \\
         \vdots\\
         d^\top \nabla^2 g_{\iota_2}(\bar x, \bar y) d
        \end{array}
 \right],\\[2ex]
\ubar{a}(z) := \Big[1, \; -\nabla F(\bar x, \bar y)^\top - \lambda \nabla f(\bar x, \bar y)^\top + \lambda\big[\nabla_1 \ell(\bar x, z, w)^\top, \; 0\big]\Big],
\end{array}
$$
 $\ubar{b}(z):=d^\top \nabla^2 F(\bar x, \bar y) d + \lambda d^\top\nabla^2 f(\bar x, \bar y) d - \lambda \xi_{d^1} (\bar x, z)$, and $B:=  S_1(\bar x; d^1)$.
Note that $\iota_1$ (resp. $\iota_2$) stands for the cardinality of $I^1(d)$ (resp. $I^2(d)$).
Clearly, problem \eqref{primalPB--1} is a semi-infinite optimization problem and for any vector $c\in \mathbb{R}\times \mathbb{R}^n \times \mathbb{R}^m$ such that $c_1 >0$ and $c_i:=0$ for $i=2, \ldots, n+m+1$,
$$
A^\top_i c > 0, \;\, i=1, \ldots, n+m \;\; \mbox{ and } \;\; \ubar{a}(z)^\top c>0 \mbox{ for all } \; z\in B,
$$
where $A_i$ represents row $i$ for $A$.
Hence, the extended Mangasarian-Fromowitz constraint qualification (see, e.g., \cite{hettich1993semi} for the definition) holds at any feasible point of problem \eqref{primalPB--1}. Combining this with the compactness of $S_1(\bar x; d^1)$, it follows from the duality theory of linear semi-infinite optimization (cf. latter reference)  that the dual of problem \eqref{primalPB--1} can be obtained as
\begin{equation}\label{ryou}
    \begin{array}{rl}
\underset{\kappa, u, v}\max & d^\top \nabla^2\bar{\mathcal{L}}^{\lambda}_{\kappa_\circ}(\bar x, \bar y, u, v) d     - \lambda\sum^{k}_{t=1}\kappa_t\xi_{d^1}(\bar x,  z^t)\\
\mbox{s.t. } &  (\kappa, u, v) \;\;\mbox{ satisfying } \eqref{onam1}-\eqref{onam3}
\end{array}
\end{equation}
for some $z^t\in S_1(\bar x; d_x)$, $t=1, \ldots, k$, with $k\in \mathbb{N}$ and $\kappa_{\circ}:= \sum^{\iota_1}_{i=1}\kappa_t$. Hence, condition \eqref{molio} holds if the one in \eqref{SOSCB} is satisfied for some $u$, $v$, $\kappa_t$, $z^t\in S_1(\bar x; d^1)$ with $\Lambda(\bar x, z^t)=\{w^t\}$, for $t=1, \ldots, k$, where $k\in \mathbb{N}$ and $\kappa_{\circ}:= \sum^{k}_{i=1}\kappa_t$, such that \eqref{onam1}--\eqref{onam3}.

Finally, \af{it} remains to show that ${[c]}$ holds; i.e., condition \eqref{2ndEpi} is satisfied. To proceed, first recall that under the assumptions of Theorem \ref{Theorem-phi''} that the negative of the optimal value function $\varphi$ \eqref{varphi} is second order epiregular (see \cite[Theorem 4.142]{BonnansShapiroBook2000}) at $\bar x$, i.e.,
\begin{equation}\label{val00}
\begin{array}{c}
-\varphi\left(\bar x + td^1 + \frac{1}{2}t^2 e^1(t)\right) \geq -\varphi (\bar x) - t \varphi' (\bar x; d^1) - \frac{1}{2} t^2 \varphi''\left(\bar x; d^1, e^1(t)\right) - \circ(t^2)
\end{array}
\end{equation}
 for any $d$, $t\geq 0$ and any function (path) $e$ from $\mathbb{R}_+$ to $\mathbb{R}^n \times \mathbb{R}^m$ (with $e(t):=(e^1(t), \; e^2(t))$, where $e^1(t)\in \mathbb{R}^n$) such that $te(t) \rightarrow 0$ as $t \downarrow 0$. Furthermore, as the function $(F+\lambda f)$ is twice continuously differentiable, it holds that
\begin{equation}\label{valo2}
\begin{array}{lll}
(F+\lambda f)\left((\bar x, \bar y) + td + \frac{1}{2}t^2 e(t)\right) & \geq & (F+\lambda f)(\bar x, \bar y) + t (F+\lambda f)'(\bar x, \bar y; d) \\
                                               & & \qquad \qquad + \frac{1}{2} t^2 (F+\lambda f)''\left(\bar x, \bar y; d, e(t)\right) + \circ(t^2).
\end{array}
\end{equation}
Multiplying \eqref{val00} by $\lambda$, which is positive, and adding the resulting inequality to \eqref{valo2},
\begin{equation}\label{valo3}
\begin{array}{c}
\left(F+\lambda (f -\varphi)\right)\left((\bar x, \bar y) + td + \frac{1}{2}t^2 e(t)\right) \;\; \geq \;\; \left(F+\lambda (f -\varphi)\right)(\bar x, \bar y) \\
                  \quad \qquad   \qquad \qquad + \;\; t \left(F+\lambda (f -\varphi)\right)'(\bar x, \bar y; d) + \frac{1}{2} t^2 \left(F+\lambda (f -\varphi)\right)''\left(\bar x, \bar y; d, e(t)\right) + \circ(t^2).
\end{array}
\end{equation}
Similarly to \eqref{valo2},  the upper- and lower-level constraint functions satisfy the second order epiregularity conditions. Subsequently, condition \eqref{2ndEpi} holds.
\end{proof}
Assuming that the lower-level optimal solution mapping $S$ \eqref{P} is single-valued at $\bar x$ can lead to a much simpler result, closely aligned to Theorem \ref{SOSSC-Theorem 1-1}, and subsequently to the derivation of a Robinson-type condition for problem \eqref{Penalized-LLVF}. To proceed,  we define $ C(\bar x, \bar y):= Q(\bar x, \bar y, \bar y)$; cf. \eqref{Ponana}.
\begin{thm}\label{SuffNon-NEW01}
Let $\bar\zeta:=(\bar x, \bar y, \bar z, \bar u, \bar v, \bar w)$ satisfy  \eqref{KS-1}--\eqref{KS-3}  for some $\lambda>0$, with $S(\bar x)=\{\bar y\}=\{\bar z\}$, and USCC (resp. LSCC) hold at $(\bar x, \bar y, \bar u)$ (resp. $(\bar x, \bar y, \bar v)$), i.e., $\theta^1 = \emptyset$ (resp. $\theta^2 = \emptyset$). Suppose that the lower-level problem is convex at $\bar x$  and the  assumptions in Theorem \ref{Theorem-phi''0} are satisfied  for all $d\in C(\bar x, \bar y)$. Then, $(\bar x, \bar y)$ is a strict local optimal solution of problem \eqref{Penalized-LLVF} provided that  the ULICQ \eqref{LICQ-leader} holds at $(\bar x, \bar y)$ and, for all $d\in C(\bar x, \bar y)\setminus \{0\}$, we have
\begin{equation}\label{SOSCB-11}
  \begin{array}{l}
d^\top \nabla^2\mathcal{L}^{\lambda}(\bar x, \bar y, \bar u, \bar v) d > \lambda\, \left(d^{1,3}\right)^\top\nabla^2\ell(\bar x,  \bar y, \bar w) d^{1,3}.
  \end{array}
  \end{equation}
\end{thm}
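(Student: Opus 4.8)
The plan is to derive this as a specialization of Theorem~\ref{SuffNon-NEW}: all of the structural hypotheses of that theorem (the KKT conditions \eqref{KS-1}--\eqref{KS-3}, lower-level convexity at $\bar x$, and the second order directional differentiability of $\varphi$) are in force, so it suffices to verify that the clean condition \eqref{SOSCB-11} implies the general condition \eqref{SOSCB} for a suitable multiplier choice; the conclusion that $(\bar x,\bar y)$ is a strict local minimizer of \eqref{Penalized-LLVF} then follows from Theorem~\ref{SuffNon-NEW}. First I would record the simplifications produced by single-valuedness: because $S(\bar x)=\{\bar y\}=\{\bar z\}$ one has $S_1(\bar x;d^1)=\{\bar y\}$ for every $d$, so the sum in \eqref{SOSCB} collapses to a single term with $k=1$, $z^1=\bar y$ and $w^1=\bar w$, and the framework of Theorem~\ref{Theorem-phi''0} may replace that of Theorem~\ref{Theorem-phi''}.

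Next I would fix the multipliers. Set $\varrho:=1+\sum_{i\in\nu^1}\bar u_i+\sum_{j\in\nu^2}\bar v_j>0$ and take $\kappa_\circ:=1/\varrho$, $u:=\bar u/\varrho$, $v:=\bar v/\varrho$. Since $\theta^1=\theta^2=\emptyset$, the active index sets are constant on the cone, namely $I^1(d)=\nu^1$ and $I^2(d)=\nu^2$ for every $d\in C(\bar x,\bar y)$, while $\bar u_i=0$ off $\nu^1$ and $\bar v_j=0$ off $\nu^2$. Using \eqref{KS-1}--\eqref{VS-2} together with $\bar z=\bar y$ one checks directly that this choice satisfies \eqref{onam1}--\eqref{onam3}, and that $\nabla^2\bar{\mathcal L}^{\lambda}_{\kappa_\circ}(\bar x,\bar y,u,v)=\kappa_\circ\,\nabla^2\mathcal L^{\lambda}(\bar x,\bar y,\bar u,\bar v)$. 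Dividing by $\kappa_\circ>0$, condition \eqref{SOSCB} for this fixed multiplier reduces to requiring that $(d^{1,2})^\top\nabla^2\mathcal L^{\lambda}(\bar x,\bar y,\bar u,\bar v)\,d^{1,2}>\lambda\,\xi_{d^1}(\bar x,\bar y)$ hold for all $d^{1,2}\in C^{\lambda}(\bar x,\bar y)\setminus\{0\}$. Here ULICQ \eqref{LICQ-leader} guarantees that these normalized multipliers are the unique feasible point of \eqref{onam1}--\eqref{onam3}, which pins the existential choice in Theorem~\ref{SuffNon-NEW} down to the present data and aligns the condition with the CD-regularity assumptions of Theorem~\ref{SOSSC-Theorem 1-1}.

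The crux is to deduce this reduced inequality from \eqref{SOSCB-11}. First I would simplify the cone: under $\theta^1=\theta^2=\emptyset$ the inequality constraints in \eqref{C(x,y)} indexed by $\theta^1,\theta^2$ are vacuous, and by \eqref{uopa} the objective-gradient inequality in the last line of \eqref{C(x,y)} holds automatically with equality on the remaining equality cone; hence $C^{\lambda}(\bar x,\bar y)=\{d^{1,2}\mid \nabla G_i^\top d^{1,2}=0,\ i\in\nu^1;\ \nabla g_j^\top d^{1,2}=0,\ j\in\nu^2\}$. Now fix $d^{1,2}\in C^{\lambda}(\bar x,\bar y)\setminus\{0\}$ and let $e^1_\ast$ attain the minimum defining $\xi_{d^1}(\bar x,\bar y)$ over $\mathcal Z_{d^1}(\bar x,\bar y)$; the minimum is finite and attained thanks to the LSOSC-type second order condition carried by Theorem~\ref{Theorem-phi''0}. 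Form $\tilde d:=(d^1,d^2,e^1_\ast)\in\mathbb{R}^{n+2m}$. The $\nu^1,\nu^2$ equalities hold because $d^{1,2}\in C^{\lambda}$, while the $\nu^3$ equalities $\nabla_1 g_j^\top d^1+\nabla_2 g_j^\top e^1_\ast=0$ hold because $e^1_\ast\in\mathcal Z_{d^1}(\bar x,\bar y)$; consequently $\tilde d\in Q(\bar x,\bar y,\bar y)=C(\bar x,\bar y)$ and $\tilde d\neq 0$. Applying \eqref{SOSCB-11} to $\tilde d$ yields $(d^{1,2})^\top\nabla^2\mathcal L^{\lambda}(\bar x,\bar y,\bar u,\bar v)\,d^{1,2}>\lambda\,(d^1,e^1_\ast)^\top\nabla^2\ell(\bar x,\bar y,\bar w)(d^1,e^1_\ast)$, and the right-hand side equals $\lambda\,\xi_{d^1}(\bar x,\bar y)$ precisely because $e^1_\ast$ is the minimizer. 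This is the reduced inequality, so \eqref{SOSCB} holds and Theorem~\ref{SuffNon-NEW} applies.

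The hard part will be this last extension step: one must match the free component $d^3$ of the cone $Q(\bar x,\bar y,\bar y)$ with the inner minimization defining $\xi_{d^1}$ and verify that the minimizer $e^1_\ast$ renders $\tilde d$ feasible for $Q$. The point to watch is that $\mathcal Z_{d^1}$ carries inequality constraints on $\theta^3$ whereas $Q$ only imposes the $\nu^3$ equalities; since $e^1_\ast$ satisfies the former it a fortiori satisfies the latter, so no conflict arises, but the whole argument rests on the minimum being attained, which is exactly where the second order hypotheses of Theorem~\ref{Theorem-phi''0} enter. A secondary subtlety is confirming that the $d$-independence of the active sets $I^1(d),I^2(d)$, a consequence of $\theta^1=\theta^2=\emptyset$, permits the single fixed multiplier $(\kappa_\circ,u,v)$ to serve for every $d$ in the cone.
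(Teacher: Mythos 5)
Your proposal is correct and follows essentially the same route as the paper's proof: specialize Theorem \ref{SuffNon-NEW} using $S(\bar x)=\{\bar y\}=\{\bar z\}$ so that $S_1(\bar x;d^1)=\{\bar y\}$ and the sum in \eqref{SOSCB} collapses to one term, normalize $(\bar u,\bar v)$ by $\varrho=1+\sum_i\bar u_i+\sum_j\bar v_j$ to satisfy \eqref{onam1}--\eqref{onam3} (with $\theta^1=\theta^2=\emptyset$ giving $I^1(d)=I^1$ and $I^2(d)=I^2$ on the cone), and deduce the reduced inequality \eqref{SOSCB01-1-2} from \eqref{SOSCB-11} by inserting the minimizer of $\xi_{d^1}(\bar x,\bar y)$ as the third block of a direction in $C(\bar x,\bar y)=Q(\bar x,\bar y,\bar y)$. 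The only (harmless) divergences are presentational: you exhibit the normalized multipliers directly while the paper argues through an existential condition and then uses ULICQ to force $u=\bar u$, $v=\bar v$, and you make explicit both the attainment of the minimum defining $\xi_{d^1}$ and the $\nu^3$-versus-$\theta^3$ feasibility matching, which the paper leaves implicit.
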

\begin{proof} Proceeding as in the proof of the previous theorem, we have the corresponding expressions of \eqref{melan} for $d\in \mathbb{R}^{n+m}$,  and \eqref{Psio2ndDirect} for  $d \in C(\bar x, \bar y)$ and $e\in \mathbb{R}^{n+m}$, given that $S_1(\bar x; d^1)\subseteq S(\bar x)=\{\bar y\}$; cf.  \eqref{S1(x,d)}. Further proceeding as in the previous theorem, the point $(\bar x, \bar y)$ is a strict local optimal solution of problem \eqref{Penalized-LLVF} if  for all  $d \in C(\bar x, \bar y)$, it holds that
\begin{equation}\label{SOSCB01-1}
  \begin{array}{l}
d^\top \nabla^2\bar{\mathcal{L}}^{\lambda}_{\kappa_\circ}(\bar x, \bar y, u, v) d > \lambda\, \kappa_\circ\xi_{d^1} (\bar x, \bar y)
  \end{array}
  \end{equation}
for some $(\kappa_\circ, u, v)$ verifying \eqref{onam3} together with the following conditions, while also taking into account the fact that $S_1(\bar x; d^1)\subseteq S(\bar x)=\{\bar y\}$ and $\{\bar w\} = \Lambda(\bar x, \bar y)$ (cf. Theorem \ref{Theorem-phi''0}):
\begin{eqnarray}\label{optimality conditions OK-1}
\nabla_1\bar{\mathcal{L}}^{\lambda}_{\kappa_\circ}(\bar x, \bar y, u, v) - \lambda \kappa_\circ \nabla_1 \ell(\bar x, \bar y, \bar w)=0, \;\;
 \nabla_2\bar{\mathcal{L}}^{\lambda}_{\kappa_\circ}(\bar x, \bar y, u, v)=0.\label{KS-11}
\end{eqnarray}
One can easily show that the existence of $(\kappa_\circ, u, v)$ verifying \eqref{KS-2} and \eqref{KS-3} (cf.  $\{\bar w\} = \Lambda(\bar x, \bar y)$), together with \eqref{onam3} and \eqref{KS-11}, such that \eqref{SOSCB01-1} holds, is equivalent to the existence of some $(\kappa'_\circ, u', v')\neq 0$ satisfying \eqref{KS-2}, \eqref{KS-3},  \eqref{KS-11}, and
 $$
 \kappa_\circ \geq 0, \;\; u_i\geq 0\,\mbox{ for }\, i\in I^1(d),\;\; v_j\geq 0 \,\mbox{ for }\, j\in I^2(d)
 $$
 such that \eqref{SOSCB01-1} holds, with $(\kappa_\circ, u, v)$  replaced by $(\kappa'_\circ, u', v')$. Subsequently, taking into account the fact that $I^1(d) \subseteq I^1$ and $I^2(d) \subseteq I^2$, it is not difficult to show that with the ULICQ satisfied at $(\bar x, \bar y)$, this point is a strict local optimal solution of problem \eqref{Penalized-LLVF} provided for all  $d \in C(\bar x, \bar y)$,
\begin{equation}\label{SOSCB01-1-2}
  \begin{array}{l}
d^\top \nabla^2\bar{\mathcal{L}}^{\lambda}_{1}(\bar x, \bar y, u, v) d > \lambda\,\xi_{d^1} (\bar x, \bar y)
  \end{array}
  \end{equation}
holds for some $(u, v)$ such that $(\bar x, \bar y, \bar z, u, v, \bar w)$ satisfies \eqref{KS-1}--\eqref{KS-3}, with $\bar z = \bar y$. It therefore remains to  show that we have $\bar u=u$ and $\bar v=v$.

To proceed, let us first show that $I^1(d)=I^1$ and $I^2(d)=I^2$ for any $d\in C(\bar x, \bar y)$. Obviously, $I^1(d)\subseteq I^1$ and $I^2(d) \subseteq I^2$, by definition. For the converse, let $i\in I^1$. Then $i\in \nu^1$, given that $\theta^1=\emptyset$. Furthermore, considering the definition of $C(\bar x, \bar y)$, it follows that $\nabla G_i(\bar x, \bar y)^\top d=0$. Hence, $i\in I^1(d)$. Subsequently, as $(\bar x, \bar y, \bar u, \bar v, \bar w)$ and $(\bar x, \bar y, u, v, \bar w)$ both satisfy \eqref{KS-1}--\eqref{KS-3} with $\bar z=\bar y$,
  $$
  \sum_{i\in I^1}(\bar u_i - u_i)\nabla G_i(\bar x, \bar y) + \sum_{j\in I^2}(\bar v_j - v_j)\nabla g_j(\bar x, \bar y)  =0.
  $$
Based on the ULICQ at $(\bar x, \bar y)$, it follows that $\bar u=u$ and $\bar v=v$, given that the components of these vectors are all zero when $i\notin I^1$ and $j\notin I^2$, respectively. To conclude the proof, observe that condition \eqref{SOSCB-11} is sufficient for \eqref{SOSCB01-1-2} to hold, considering the definition of  $\xi_{d^1} (\bar x, \bar y)$ in \eqref{varphi"-REST}.
\end{proof}

It is clear that under the framework of this theorem, the conclusion of Theorem \ref{SOSSC-Theorem 1-1} is valid, while also guarantying that the resulting point $(\bar x, \bar y)$ is a strict local optimal solution of problem \eqref{Penalized-LLVF}. Hence, Theorem \ref{SuffNon-NEW01} can be seen as an extension of the Robinson condition to bilevel optimization, in the sense discussed at the beginning of this section.

A common point between Theorems \ref{SuffNon-NEW} and \ref{SuffNon-NEW01} is that the \af{SSOSC}-type condition, i.e., \eqref{SOSCB} and \eqref{SOSCB-11}, respectively, are the main assumptions, as the remaining ones (except from the USCC and LSCC in Theorem \ref{SuffNon-NEW01}) are mostly technical, helping to ensure that the LLVF $\varphi$ \eqref{varphi} is second order directionally differentiable. Observe that the USCC and LSCC in Theorem \ref{SuffNon-NEW01} help to ensure that points satisfying \eqref{onam1}--\eqref{onam3} coincide with stationarity points in the sense \eqref{KS-1}--\eqref{KS-3}.

Next, we provide a small example illustrating Theorem \ref{SuffNon-NEW01}, with the corresponding Robinson-type framework.
\begin{example}
Considering the simple bilevel optimization problem
$$
\underset{x,y}\min~xy \;\,\mbox{ s.t. }\;\, x+y\leq 2, \;y\in S(x):=\underset{y}{\arg\min}~\left\{y~|\;\, x-y\leq 0\right\},
$$
we obviously have $S(x)=\{ x \}$ for all $x\in \mathbb{R}$ and one can easily check that the vector $(\bar x, \bar y, \bar z, \bar u, \bar v, \bar w)$ with $\bar x = \bar y = \bar z =0$, $\bar u=0$, $\bar v=\lambda $, and $\bar w=1$, satisfies the optimality conditions \eqref{KS-1}--\eqref{KS-3} for any $\lambda >0$. For this vector, $\theta^1=\nu^1 = \emptyset$, $\theta^2 = \theta^3=\emptyset$ and $\nu^2 =\eta^3 =\{1\}$. Hence, the critical cone $C(\bar x, \bar y, \bar z)=\left\{d\in \mathbb{R}^3|\; d_1=d_2=d_3 \right\}$, and subsequently, for any $d\in C(\bar x, \bar y, \bar z)\setminus \{0\}$,
$$
\left(\begin{array}{c}
         d_1\\
         d_2
       \end{array}
 \right)^\top \nabla^2 \mathcal{L}^\lambda (\bar \zeta) \left(\begin{array}{c}
         d_1\\
         d_2
       \end{array}
 \right) - \left(\begin{array}{c}
         d_1\\
         e_2
       \end{array}
 \right)^\top \nabla^2 \ell (\bar \zeta) \left(\begin{array}{c}
         d_1\\
         e_2
       \end{array}
 \right) = 2d^2_1 >0.
$$
All the other assumptions of Theorem \ref{SuffNon-NEW01} are also satisfied. $(0,0)$ is indeed the unique optimal solution of the problem above and the corresponding penalized version \eqref{Penalized-LLVF} for any $\lambda >0$.
\end{example}
Note that all the assumptions in Theorem  \ref{SuffNon-NEW01} also hold for the problem in Example \ref{example78}, given that at $(\bar x, \bar y, \bar v):=(0,0, 0, \lambda, \lambda)$ and $(\bar x, \bar z, \bar w):=(0,0, 0, 1, 1)$,
$
 \eta^2=\emptyset, \; \theta^2=\emptyset, \; \nu^2=\{1, \, 2\} \;\, \mbox{ and } \;\,  \eta^3=\emptyset, \; \theta^3=\emptyset, \; \nu^3=\{1, \, 2\},
$
 respectively. Hence, having $\nabla_2 g_i(\bar x, \bar z, \bar w)^\top\left(e_2, \,e_3\right)^\top=0$ for $i\in \nu^3$, is equivalent to $e_2 = e_3 =0$. Hence, combining this with the calculations in Example \ref{example78}, it is clear that all the assumptions in Theorem \ref{SuffNon-NEW01} hold for problem \eqref{example-pb}. 

For examples where the assumptions in Theorem \ref{SuffNon-NEW}, ensuring the second order differentiability of $\varphi$ \eqref{varphi}, hold without the uniqueness of the lower-level optimal solution, see, e.g., \cite{Shapiro1988,BonnansShapiroBook2000}.

To close this section, we provide an example confirming that the SSOSC-type condition \eqref{SOSCB-11} is essential in guarantying that a point is locally optimal for problem \eqref{Penalized-LLVF}, and potentially also a necessary condition.
\begin{example}
 Consider the following example from \cite[Chapter 5]{DempeFoundations2002}:
 $$
\underset{x,y}\min~\left(x-3.5\right)^2 + \left(y +4\right)^2 \;\,\mbox{ s.t. }\;\;y\in S(x):=\underset{y}{\arg\min}~\left\{(y-3)^2~|\;\, -x+y^2\leq 0\right\}.
$$
The lower-level solution set-valued mapping is single-valued; i.e., precisely,
$$
S(x)=\left\{\begin{array}{lll}
              \emptyset & \mbox{ if } & x<0,\\
               \{\sqrt{x}\} & \mbox{ if } & 0\leq x \leq 9,\\
                \{3\} & \mbox{ if } & x>9,
            \end{array}
\right.
$$
and we can easily check that the point $(1, 1, 1, 2\lambda -5, 2)$ satisfies the optimality conditions \eqref{KS-1}--\eqref{KS-3}, for $\lambda > \frac{5}{2}$. Obviously, at $(1, 1, 2\lambda -5)$ and $(1, 1, 2)$, we have
 $$
 \eta^2=\emptyset, \; \theta^2=\emptyset, \; \nu^2=\{1\} \;\, \mbox{ and } \;\,  \eta^3=\emptyset, \; \theta^3=\emptyset, \; \nu^3=\{1\},
 $$
respectively. Moreover, $C(1,1,1)=\left\{(d_1, d_2, e_2)|\; d_2 = e_2 = \frac{1}{2} d_1 \right\}$ and for all $(d_1, d_2, e_2)\in C(1,1,1)$,
 $$
\left(\begin{array}{c}
        d_1\\d_2
      \end{array}
\right)^\top \nabla^2 \mathcal{L}^\lambda \left(1, 1, 1, 2\lambda -5, 2\right) \left(\begin{array}{c}
        d_1\\d_2
      \end{array}
\right) - \left(\begin{array}{c}
        d_1\\e_2
      \end{array}
\right)^\top \nabla^2 \ell \left(1, 1, 2\right) \left(\begin{array}{c}
        d_1\\e_2
      \end{array}
\right)=0.
$$
We can however check that all the other assumptions of Theorem \ref{SuffNon-NEW01} are satisfied for $(1, 1, 1, 2\lambda -5, 2)$.
\end{example}
Further analysis on the necessity of condition \eqref{SOSCB-11} and the corresponding condition in Theorem \ref{SuffNon-NEW}, will be studied more closely in a future work. Different types of sufficient optimality conditions for bilevel optimization problems can be found in the papers \cite{DempeNecessary1992,DempeGadhiSecondOrder2010}.

\section{Numerical experiments}\label{Numerical experiments}

Based on our implementation of Algorithm \ref{algorithm 1} in \MATLAB (R2018a), we report and discuss test results obtained for the 124 nonlinear bilevel programs from the current version of the BOLIB library \cite{BOLIB2017}.

Recall that the necessary optimality conditions \eqref{KS-1} -- \eqref{KS-3} and their reformulation
\eqref{Eq-Main} as nonsmooth system of equations contain the penalization parameter $\lambda>0$.
Based on the construction process of the system (see, e.g., \cite{DempeDuttaMordukhovichNewNece, YeZhuOptCondForBilevel1995}), there is no specific rule on how to choose the  best value of this parameter. Rather, one may try all $\lambda$ from a certain finite discrete set in $(0,\infty)$, solve the corresponding optimality conditions, and then choose the best solution in terms of the upper-level objective function value.
For our approach, it turned out that a small set of $\lambda$-values  is sufficient to reach very good results.
To be precise, for all our experiments, we just used the nine values of $\lambda$ in $\Lambda:=\lambda\in\{2^{-1}, 2^{0}, \cdots, 2^{6}, 2^{7}\}$.

\subsection{Implementation details}

Besides the selection of penalization parameters described before, the other parameters needed in Algorithm \ref{algorithm 1} are set to
\[
\beta:=10^{-8},\quad \epsilon:=10^{-8},\quad t:=2.1,\quad \rho:=0.5,\quad \sigma:=10^{-4}.
\]
For each test example, we only used one starting point. The choices for starting points $x^o$ and $y^o$ are as follows.  If  an  example in literature comes with a starting point, then we use this point for our experiments. Otherwise, we choose $x^o=\textbf{1}_n, y^o=\textbf{1}_m$ except for three examples {\bf No} 20,119 and 120 because their global optimal solutions are $(\textbf{1}_n, \textbf{1}_m)$, where $\textbf{1}_n:=(1,\cdots,1)^\top\in\mathbb{R}^n$. So, for these three examples we used $x^o=-\textbf{1}_n, y^o=-\textbf{1}_m$.  Detailed information on starting points can be found in \cite{FischerZemkohoZhouDetailed2018}.   
Moreover, to fully define $\zeta^o=(x^o,y^o,z^o,u^o,v^o,w^o)$, we set
\[
z^o:=y^o,~u^o:=\left(|G_1(x^o,y^o)|, \cdots,|G_p(x^o,y^o)|\right)^\top,~
v^o:= \left(|g_1(x^o,y^o)|, \cdots, |g_q(x^o,y^o)|\right)^\top,~w^o:=v^o.
\]
In addition to the stopping criterion $\|\Phi^{\lambda}(\zeta^k)\|\le\epsilon$ used in Algorithm \ref{algorithm 1}, the algorithm is terminated if the iteration index $k$ reaches 2000. 

Finally,  to pick an element from the generalized B-subdifferential $\partial_B\Phi^{\lambda}(\zeta^k)$ in Step 2 of Algorithm \ref{algorithm 1}, we adopt the technique in \cite{DeLuca1996}.

\subsection{Test results}

Table \ref{numerical-results} lists, for any of the 124 test examples of the BOLIB library \cite{BOLIB2017}, values of the leader's objective function $F$.
The column  {\boldmath $F_{known}$} shows the best known $F$-values from the literature. Such a value was not available for 6 of the test problems. This is marked by ``unknown'' in the column \textbf{Status}. For 83 examples, the best known $F$-value is even optimal (with status labelled as ``optimal''). For the remaining 35 test problems, the known {\boldmath $F$}-value might not be optimal and its status is
just set to ``known''. The columns below {\boldmath $F_{new}$} show the $F$-values obtained by
Algorithm \ref{algorithm 1} for the nine  penalization  parameters $\lambda$.

Note that three examples contain a parameter that should be provided by the user. They are  examples \textbf{No} 14, 39, and 40.  The first one is associated with $\rho\geq 1$, which separates the problem into 4 cases: (i) $\rho=1$, (ii) $1<\rho<2$, (iii) $\rho=2$, and (iv) $\rho>2$. The results presented in Table \ref{numerical-results} correspond to case (i). For the other three cases, our method still produced the true global optimal solutions.  Example \textbf{No} 39 has a unique global optimal solution. The result presented in Table \ref{numerical-results} is for $c=0$. We also tested our method when $c=\pm1$, and obtained the unique optimal solutions as well. Example \textbf{No} 40 contains the parameter $M>1$, and the results presented in Table \ref{numerical-results} correspond to $M=1.5$.

Let us first note that evaluating the performance of an algorithm for the bilevel optimization problem \eqref{P} is a difficult task since the decision whether a computed point is (close
to) a global solution of \eqref{P} basically requires computing the LLVF $\varphi$. Therefore, instead of doing this, we suggest the following way of comparing our results for Algorithm \ref{algorithm 1} with the
results from literature known for the test problems. For an approximate solution $(x,y)$ obtained from Algorithm \ref{algorithm 1}, we first compute
\[
 \delta_F:=\frac{F(x,y)-F_{known}}{\max\{1,|F_{known}|\}},\quad\delta_f:=\frac{f(x,y)-f_{known}}{\max\{1,|f_{known}|\}},
\]
where $F_{known}$ as above is the best known $F$-value from literature and $f_{known}$ the lower-level function value which corresponds to $F_{known}$. Moreover, we set
\[
 \delta:=\left\{
 \begin{array}{ll}
 \max\{|\delta_F|,|\delta_f|\},\quad&\mbox{if  \textbf{Status} is optimal},\\
 \max\{\delta_F,\delta_f\},\quad&\mbox{otherwise}.
 \end{array}
 \right.
\]
In the latter case, $\delta$ can become negative. This means that both $F(x,y)$ and $f(x,y)$ are smaller than the values for the point with best $F$-value known from literature. In the last
column of Table \ref{numerical-results}, marked by $\delta_*$ we provide the smallest $\delta$-value among those obtained for all $\lambda\in\Lambda$ for the corresponding test problem.

\begin{landscape}
{\small
\LTXtable{\textwidth}{input_table.tex}
}
\end{landscape}

\begin{table}[H]
{\renewcommand\baselinestretch{1.15}\selectfont
\begin{tabular}{lrrrrrrrrr}\\\hline
$\lambda$ &$2^{-1}$&$2^0$&$2^1$&$2^{2}$&$2^{3}$&$2^{4}$&$2^{5}$&$2^{6}$&$2^{7}$\\\hline
Number of failures	&	6	&	3	&	2	&	1	&	5	&	9	&	12	&	14	&	16	\\
$\alpha_K=1$	&	109	&	111	&	113	&	115	&	112	&	108	&	108	&	109	&	103	\\
$y^K\approx z^K$	&	85	&	83	&	75	&	70	&	65	&	64	&	68	&	82	&	96	\\
$v^K\approx w^K$	&	32	&	45	&	39	&	37	&	37	&	41	&	41	&	38	&	42	\\
Average iterations	&	152.3	&	84.3	&	129.1	&	154.3	&	194.6	&	288.9	&	357.4	&	375.9	&	451.3	\\
Average time	&	0.18	&	0.08	&	0.13	&	0.17	&	0.21	&	0.27	&	0.32	&	0.34	&	0.40	\\
\hline
${}$\\
\end{tabular}}
\caption{Performance of Algorithm  \ref{algorithm 1} for $\lambda\in\Lambda$.}\label{perf-tab:1}
\end{table}

We then report the performance of Algorithm  \ref{algorithm 1} for $\lambda\in\Lambda$. Let $K$ denote the iteration number, where  Algorithm  \ref{algorithm 1} is terminated. The first row in Table {\ref{perf-tab:1}} shows the number of examples for which Algorithm  \ref{algorithm 1} failed, i.e., it did not reach $\|\Phi^{\lambda}(\zeta^K)\|\le\epsilon$. Failures were observed only for a very small portion of the 124 examples.
The second row  in Table \ref{perf-tab:1} reports for how many examples the last step was a full Newton step, i.e., $\alpha_K=1$. This behavior could be observed for more than 100 examples regardless of $\lambda\in\Lambda$.

 We also want to consider whether $y^{K}\approx z^{K}$  occurs. As mentioned in Remark \ref{fallom},  $y=z$ generates another well-known type of stationary conditions, i.e., \eqref{VS-2}--\eqref{KS-3} and \eqref{KS-1*}.
Moreover, $w$ is a multiplier associated to the lower-level constraint function $g$ in connection to the lower-level problem, $v$ is associated to the same function $g$, but in connection to the upper-level problem.
The analogy in the complementarity systems \eqref{VS-4} and \eqref{KS-3} raises the question whether $v$ and $w$ coincide in some examples.
In Table \ref{perf-tab:1}, the fifth and sixth rows list  how many examples, for each $\lambda$, were solved with  $y^{K}\approx z^{K}$ or $v^{K}\approx w^{K}$.
The symbol ``$\approx$'' used here means that $  \|y^{K}-z^{K}\|/max(1,\|z^{K}\|)  \le 0.01$ and $\|w^{K}-v^{K}\|/max(1,\|v^{K}\|)  \le 0.01$  respectively. More information about the test runs and time of Algorithm \ref{algorithm 1} is provided by the last two rows of Table \ref{perf-tab:1}.  The average iterations and CPU time (in seconds) are less than 500 and 0.5 seconds, respectively, over all runs without failure. 



\begin{figure}[H]
\centering
    \includegraphics[width=1\linewidth]{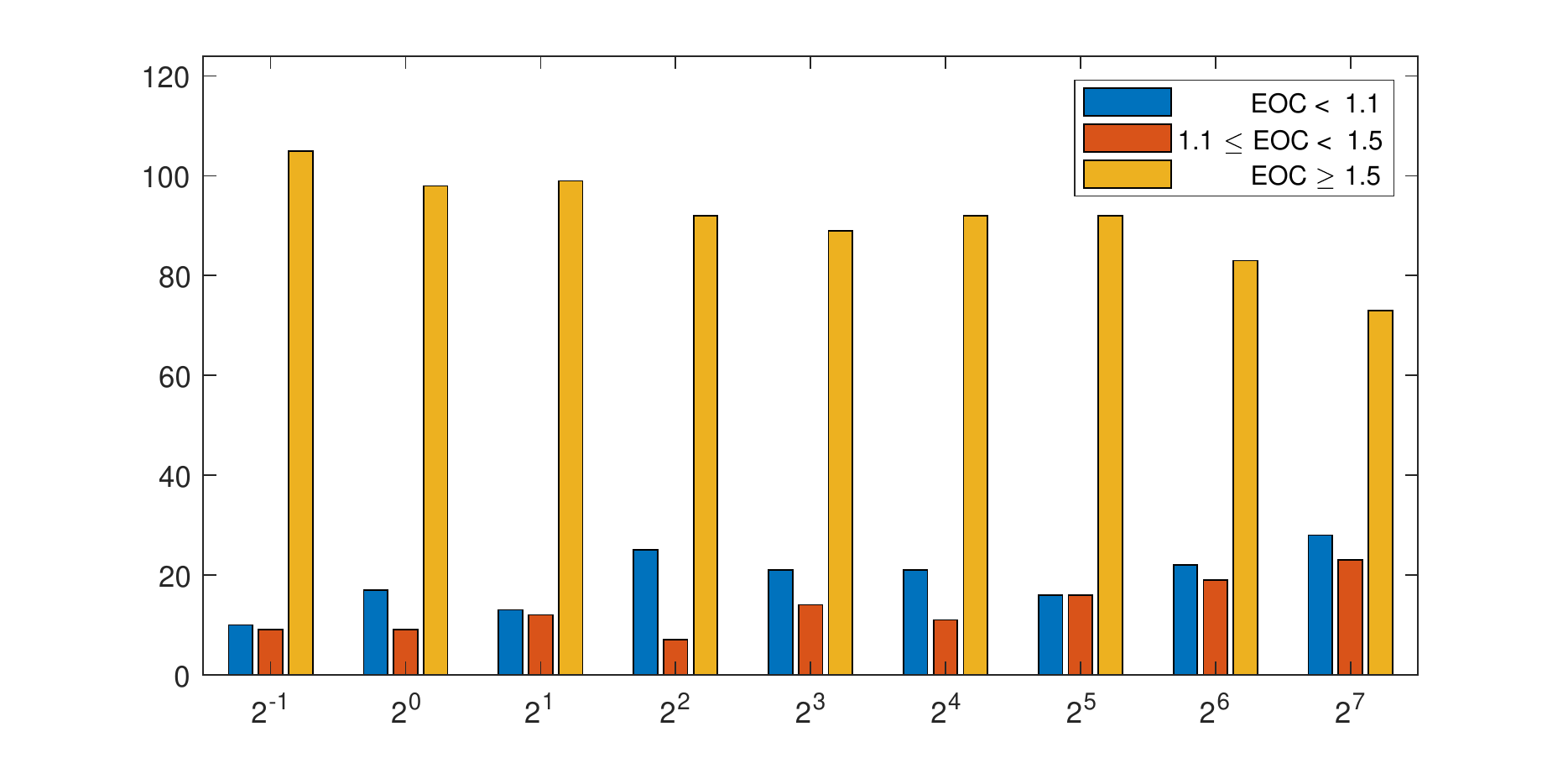}
  \caption{Experimental order of convergence (EOC)  of Algorithm  \ref{algorithm 1} for $\lambda\in\Lambda$.}
  \label{eoc}
\end{figure}

Finally, in order to  estimate the local behaviour of Algorithm \ref{algorithm 1} on  our test runs, Figure \ref{eoc} reports on the experimental order of convergence (EOC) defined by
$$
\mbox{EOC}:=\max\left\{\frac{\log\|\Phi^\lambda(\zeta^{K-1})\|}{\log\|\Phi^\lambda(\zeta^{K-2})\|},\, \frac{\log\|\Phi^\lambda(\zeta^{K})\|}{\log\|\Phi^\lambda(\zeta^{K-1})\|} \right\}.
$$
For example, if $\lambda=2^{-1}$, Algorithm \ref{algorithm 1} solved 10 examples with $\mbox{EOC}<1.1$, further 9 examples with $1\leq\mbox{EOC}<1.5$, and 105 examples with $\mbox{EOC}\geq 1.5$. Similar results were observed for other $\lambda\in\Lambda$.

More details on the numerical results discussed here can be found in the supplementary material \cite{FischerZemkohoZhouDetailed2018}. 

\bibliographystyle{plain}

\end{document}